\renewcommand\labelenumi{(\roman{enumi})}
\renewcommand\theenumi\labelenumi
\newtheorem{theorem}{Theorem}[section]
\newtheorem{lemma}[theorem]{Lemma}
\newtheorem{proposition}[theorem]{Proposition}
\newtheorem{corollary}[theorem]{Corollary}
\newtheorem{definition}[theorem]{Definition}
\newtheorem{remark}[theorem]{Remark}
\newcommand{\B}{\mathcal B}
\newcommand{\mC}{\mathcal C}
\newcommand{\D}{\mathcal D}
\newcommand{\mX}{\mathcal X}
\newcommand{\A}{\mathcal A}
\newcommand{\N}{\mathbb N}
\newcommand{\Q}{\mathbb Q}
\newcommand{\R}{\mathbb R}
\newcommand{\Z}{\mathbb Z}
\newcommand{\on}{\operatorname}
\newcommand{\Aut}{\operatorname{Aut}}
\newcommand{\Ism}{\operatorname{Ism}}
\newcommand{\dom}{\operatorname{dom}}
\newcommand{\img}{\operatorname{rng}}
\newcommand{\rng}{\operatorname{rng}}
\author{Szymon G\l \c ab}
\address{Institute of Mathematics, \L\'od\'z University of Technology, W\'olcza\'nska 215, 93-005
\L\'od\'z, Poland}
\email {szymon.glab@p.lodz.pl}
\author{Przemys\l aw Gordinowicz}
\address{Institute of Mathematics, \L\'od\'z University of Technology, W\'olcza\'nska 215, 93-005
\L\'od\'z, Poland}
\email {pgordin@p.lodz.pl}
\author{Filip Strobin}
\address{Institute of Mathematics, \L\'od\'z University of Technology, W\'olcza\'nska 215, 93-005
\L\'od\'z, Poland}
\email {filip.strobin@p.lodz.pl}
\title[Dense free subgroups of automorphism groups]{Dense free subgroups of automorphism groups of homogeneous partially ordered sets}
\subjclass[2010]{Primary: 20B27; {\color{black}06A07} Secondary: 20E05; 54H11; {\color{black}06A06}} 
\keywords{ultrahomogeneus structures, automorphism groups, free groups}
\date{}
\begin{document}

\begin{abstract}
A countable poset is \emph{ultrahomogeneous} if every isomorphism between its finite subposets can be extended to an {\color{black}automorphism. The} groups $\Aut(A)$ of such posets $A$ have a natural topology in which $\Aut(A)$ are Polish topological groups. We consider the problem whether $\Aut(A)$ contains a dense free subgroup of two generators. We show that if $A$ is ultrahomogeneous, then $\Aut(A)$ contains such subgroup. Moreover we characterize whose countable ultrahomogeneous posets $A$ such that for each natural $m$, the set of all cyclically dense elements $\overline{g}\in\Aut(A)^m$ for the diagonal action is comeager in $\Aut(A)^m$. {\color{black}In our considerations we strongly use the result of  Schmerl which says that there are essentially four types of countably infinite ultrahomogeneous posets.} 
\end{abstract}

\maketitle

\section{Introduction}

In this paper we consider dense free subgroups of two generators of a Polish group of the order--preserving automoprhisms of a countable ultrahomogeneous partially ordered sets. We say that a Polish group with a dense (free) subgroup of two generators is \textit{topologically (freely) 2-generated}. In 1977 McDonough \cite{McD} proved that the group $S_\infty$ of all permutations on the natural numbers is topologically freely 2-generated; for more references and further improvements of this result see Darji and Mitchell \cite{DM1}. Among Polish groups that are topologically 2-generated there are the automorphism group $\Aut(\Q,\leq)$ of the rationals and the automorphism group $\Aut(\mathcal{R})$ of the random graph \cite{DM2}.   

An element $g$ in a Polish group $G$ is called {\color{black}\emph{ cyclically dense}} if $\{f^kgf^{-k}:k\in\Z\}$ is dense in $G$. Note that the existence of a cyclically dense element implies that $G$ is topologically 2-generated (an example of a group which is topologically 2-generated but does not have a cyclically dense element is given in Remark \ref{BnHasNoCyclicallyDenseElements}). The following generalization of a cyclically dense element was introduced by Solecki in \cite{Solecki}. The action
\[
G\times G^m\ni(f,\bar{g})\mapsto(fg_1f^{-1},\dots, fg_mf^{-1})\in G^m
\]
is called a \emph{diagonal action} of $G$ on $G^m$. We say that $\bar{g}\in G^m$ is \emph{cyclically dense} for the diagonal action of $G$ on $G^m$ if for some $f\in G$, $\{(f^kg_1f^{-k},\dots, f^kg_mf^{-k}):k\in\Z\}$ is dense in $G^m$. Solecki proved  in \cite{Solecki} that the isometry group $\on{Iso}(\mathbb{U}_0)$ of the rational Urysohn metric space $\mathbb{U}_0$ has  cyclically dense elements for each diagonal action. Consequently $\on{Iso}(\mathbb{U}_0)$ is topologically 2-generated.

A Polish group $G$ has {\emph{the Rokhlin property}} if if it has a dense conjugacy class. Clearly, the existence of a cyclically dense element in $G$ implies that $G$ has the Rokhlin property. A Polish group $G$ has {\emph{the strong Rokhlin property}} if it has a comeager conjugacy class. Clearly, the strong Rokhlin property implies Rokhlin property. The reverse implication does not hold. Solecki proved that the isometry group $\on{Iso}(\mathbb{U})$ of the Urysohn metric space $\mathbb{U}$ has  cyclically dense elements for all diagonal actions, which implies that $\on{Iso}(\mathbb{U})$ has the Rokhlin property. On the other hand Kechris proved that each conjugacy class of $\on{Iso}(\mathbb{U})$ is meager and consequently it does not have strong Rokhlin property; for the proof see \cite{GW}.

In their paper \cite{KR} Kechris and Rosendal {\color{black}defined} even stronger property than the strong Rokhlin property. A Polish group $G$ has {\color{black}\emph{ample generics}} if for each finite $n$ there is a comeager orbit for the diagonal action of $G$ on $G^n$. Solecki proved in \cite{Solecki} that $\on{Iso}(\mathbb{U}_0)$ has ample generics. The strong Rokhlin property does not imply that a given group has ample generics -- the automorphism group $\on{Aut}(\Q,\leq)$ of rationals has the strong Rokhlin property \cite{T1992} but does not have ample generics \cite{T2007}.

Gartside and Knight in \cite{GK} presented several consequences of the fact that a Polish non-Abelian and non-discrete group contains dense free subgroup. In such case almost all finitely generated groups are free, almost all countably generated groups are free and almost all compactly generated groups are free -- see \cite{GW} for precise definitions. In particular, it implies that the set {\color{black}$$\{(f,g)\in G^2:\{f,g\}\mbox{ freely generates a free subgroup of }G\}$$}is co-meager in $G^2$. We prove in Proposition \ref{PropNowhereDense} that in cases we are particularly interested on the set $\{(f,g)\in G^2:\{f,g\}$ freely generates a \textit{dense} free subgroup of $G\}$ nowhere dense in $G^2$.

Recently Jonu\v{s}as and Mitchell in \cite{JM} showed that the automorphism groups of countable ultrahomogeneous graphs are topologically 2-generated. By the characterization of countable ultrahomogeneous graphs given by Woodrow and Lachlan \cite{LW}, they need only to consider four types {\color{black}of} automorphism groups. There is a similar characterization of countable ultrahomogeneous posets given by Schmerl \cite{Sch}, see Theorem \ref{SchmerlTheorem} below. This has inspired us to study the similar problem to that of Jonu\v{s}as and Mitchell for countable ultrahomogeneous posets.\\
{\color{black} For $n\in\N$, let $A_n:=\{1,...,n\}$ and additionally put $A_\omega:=\{1,2,3,...\}$.  Choose $1\leq n\leq\omega$.\\
By $\mathcal{A}_n$ we denote the partially order set $(A_n,\leq)$, where "$\leq$" is the trivial partial order, i.e., $x\leq y$ iff $x=y$.\\
By $\B_n$ we denote the partially ordered set $(A_n\times\mathbb{Q},\leq)$, where $\mathbb{Q}$ is the set of rational numbers and the partial order $\leq$ is defined by $(k,p)\leq (m,q)$ iff $k=m$ and $p\leq q$. We refer to $\B_n$ as \emph{an antichain of chains}.\\
By $\mC_n$ we denote partially ordered set $(A_n\times\mathbb{Q},\leq)$, where the partial order $\leq$ is defined $(k,p)\leq (m,q)$ iff $p\leq q$. We refer to $\mC_n$ as \emph{a chain of antichains}.\\
Finally, let $\D$} be the generic (universal countable homogeneous) partially ordered set, that is a Fra\"iss\'e limit of all finite partial orders (in Section~\ref{SectionD} we explain the definition of $\D$).

Schmerl~\cite{Sch} showed that there are only countably many, up to isomorphism, ultrahomogeneous countable partially ordered sets. More precisely he proved the following characterisation.
\begin{theorem}\label{SchmerlTheorem}
Let $(H,<)$ be a countable partially ordered set. Then $(H,<)$ is ultrahomogeneous iff it is isomorphic to one of the following:
\begin{itemize}{\color{black}
\item[(a)] $\A_n$ for $1\leq n\leq \omega$;
\item[(b)] $\B_n$ for $1\leq n\leq \omega$;
\item[(c)] $\mC_n$ for $2\leq n\leq \omega$;
\item[(d)] $\D$.}
\end{itemize}
Moreover, no two of the partially ordered sets listed above are isomorphic.
\end{theorem}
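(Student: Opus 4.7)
The plan is to invoke Fra\"iss\'e theory: every countable ultrahomogeneous relational structure is determined up to isomorphism by its age, which must be a Fra\"iss\'e class (hereditary, closed under isomorphism, with at most countably many isomorphism types, and with the joint-embedding and amalgamation properties), and conversely every such class arises as the age of a unique countable ultrahomogeneous structure, its Fra\"iss\'e limit. The theorem then reduces to classifying the Fra\"iss\'e classes of finite posets. The easy direction---that each of $\A_n$, $\B_n$, $\mC_n$, $\D$ is countable and ultrahomogeneous, and that no two of them are isomorphic---is handled by standard back-and-forth arguments exploiting the ultrahomogeneity of $\Q$ (for $\B_n$ and $\mC_n$), the universal property of the Fra\"iss\'e limit (for $\D$), and simple invariants like the maximum antichain/chain size plus whether the fork $F=\{x<y,\,x<z,\,y\parallel z\}$ embeds.

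For the classification, given an ultrahomogeneous $H$ with age $K$, I would proceed by a case analysis on the basic three-element configurations lying in $K$. If $K$ contains no $2$-chain, then $H$ is an antichain and $H\cong\A_n$ with $n=|H|$. If $K$ contains a $2$-chain but neither $F$ nor its order-dual $\bar F$, then upper and lower sets in $H$ are chains; the comparability-components are therefore chains, each non-trivial component is ultrahomogeneous as a linear order and hence isomorphic to $\Q$, and ultrahomogeneity forces the components to be mutually isomorphic, giving $H\cong\B_n$. If $F,\bar F\in K$ but the ``N'' configuration $N=\{u<v,\,u\parallel w,\,v\parallel w\}$ does not embed in $H$, then incomparability is an equivalence relation on distinct elements; its classes are antichains of common size $n$, and the quotient is a countable dense linear order without endpoints, giving $H\cong\mC_n$. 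In the remaining case all of $F$, $\bar F$, $N$ lie in $K$, and one must show $K$ is the class of all finite posets, whence $H\cong\D$.

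The main obstacle lies in this last case: arguing that once the ``obstructions'' $F$, $\bar F$, $N$ all embed, every finite poset embeds in $H$. One proves this by induction on $|P|$, where the inductive step amounts to extending a $P\in K$ by a new vertex with arbitrarily prescribed upper and lower sets; the presence of $F$, $\bar F$, and $N$ provides exactly the flexibility needed to carry out the requisite amalgamations within $K$. A secondary technical point is the dichotomy step in the $\B_n$-case: to see that $F\notin K$ already forces $\bar F\notin K$, one amalgamates a putative copy of $\bar F$ with a $2$-chain and extracts, from the constraint that no $F$ appears, a contradiction. These combinatorial amalgamation lemmas are the technical heart of Schmerl's proof.
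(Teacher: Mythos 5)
The paper does not actually prove Theorem~\ref{SchmerlTheorem}: it is stated as a citation of Schmerl~\cite{Sch} and used as a black box throughout, so there is no argument in the text for your sketch to be measured against. Your proposed strategy --- reduce via Fra\"iss\'e theory to a classification of the amalgamation classes of finite posets, then split into cases according to which of the three-element configurations $F$, $\bar F$, $N$ embed --- is a reasonable reconstruction of how Schmerl's classification goes, and the conclusions you draw in each branch (no comparable pair $\Rightarrow\A_n$; up- and down-sets are chains $\Rightarrow$ a disjoint union of copies of $\Q$, i.e.\ $\B_n$; incomparability transitive $\Rightarrow$ a $\Q$-chain of equal antichains, i.e.\ $\mC_n$; all configurations present $\Rightarrow\D$) are the correct ones.

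There is, however, a concrete flaw in the step you describe only in passing. You claim that $F\notin K$ forces $\bar F\notin K$ by ``amalgamating a putative copy of $\bar F$ with a $2$-chain and extracting a contradiction from the constraint that no $F$ appears.'' Carrying that out does not yield a contradiction: amalgamating $\bar F=\{a<c,\ b<c,\ a\perp b\}$ with a chain $d<a$ over the point $a$, the new minimum $d$ cannot lie above $b$ (that would contradict $a\perp b$), and if $d<b$ one would indeed get $F$ --- but the amalgam in which $d\perp b$ (and $d<c$) is a perfectly good $F$-free poset, so the amalgamation property is satisfied without producing any $F$. The assertion $F\in K\iff\bar F\in K$ is true for countable ultrahomogeneous posets, but it needs a genuinely different (and longer) argument than a single amalgamation, and without it your case split is incomplete: the possibility ``$F\in K$, $\bar F\notin K$'' is not ruled out. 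The other gap you flag yourself --- showing that $F,\bar F,N\in K$ forces $K$ to be the class of \emph{all} finite posets, so that $H\cong\D$ --- is likewise the real work (one must verify the one-point extension property for arbitrary finite posets, and this is where the induction and amalgamation lemmas live). Neither gap affects the paper at hand, which takes Schmerl's theorem for granted; but as a free-standing proof of the classification, the sketch is not yet complete at precisely these two points.
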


Consider automorphisms groups $\operatorname{Aut}(\A_\omega)=S_\infty$, $\operatorname{Aut}(\B_n)$, $\operatorname{Aut}(\mC_n)$ and $\operatorname{Aut}(\D)$. We proved that each of these groups contains two elements $f,g$ such that the subgroup generated by $f$ and $g$ is free and dense. By Schmerl's Theorem, this proves our main result:
\begin{theorem}\label{main}
Let $(H,<)$ an ultrahomogeneous countable partially ordered set. Then $\Aut(A)$ is freely topologically 2-generated.
\end{theorem}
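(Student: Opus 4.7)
The plan is to invoke Schmerl's classification (Theorem~\ref{SchmerlTheorem}) to split the proof into four cases, one for each family of ultrahomogeneous countable posets, and in each case to exhibit a pair of automorphisms $f, g$ whose generated subgroup is both free of rank two and dense in $\Aut(H)$. Recall that $\Aut(H)$ carries the Polish topology of pointwise convergence, so a subgroup $\Gamma$ is dense iff for every finite $F \subseteq H$ and every partial isomorphism $\pi : F \to H$ that extends to an automorphism of $H$, some $\gamma \in \Gamma$ agrees with $\pi$ on $F$. In view of Proposition~\ref{PropNowhereDense}, the set of pairs realising both density and freeness is nowhere dense in $\Aut(H)^2$, so a Baire category argument is not available and the witnesses have to be constructed explicitly.

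The case $H = \A_\omega$ gives $\Aut(H) = S_\infty$, which is covered by McDonough~\cite{McD}; for the finite $\A_n$ the group is finite and discrete, so the assertion is essentially trivial. In the three remaining classes the strategy is to first choose $f$ with enough dynamical richness that its conjugates can send any prescribed finite configuration to any other, and then to build $g$ so as to complete density while preserving freeness. Concretely, for $\B_n$, where $\Aut(\B_n) \cong \Aut(\Q,\leq) \wr \on{Sym}(A_n)$, one mixes a suitable permutation of the index set with automorphisms of $\Q$ inside each chain; for $\mC_n$, where $\Aut(\mC_n)$ projects onto $\Aut(\Q,\leq)$ with kernel $\prod_{q \in \Q} \on{Sym}(A_n)$, the construction combines a highly transitive base automorphism with level-wise permutations. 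For the generic poset $\D$, one carries out a back-and-forth along an enumeration of all finite extension tasks imposed by the density requirement, paired with an enumeration of all nontrivial reduced words $w_1, w_2, \dots$ in the free group on two letters; at stage $n$ one extends the partial definitions of $f$ and $g$ so as to satisfy the next density task, while also choosing a witness point $x_n$ on which $w_n(f, g)$ can still be arranged to act nontrivially.

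The main obstacle, common to all three nontrivial cases and most acute for $\D$, is the tension between density --- which forces $\langle f, g\rangle$ to realise arbitrarily complex finite configurations --- and freeness --- which forbids any nontrivial reduced word in $f, g$ from collapsing to the identity. Each extension step must therefore enlarge $f$ and $g$ without pinning down the action of any nontrivial reduced word on its reserved witness, and this depends crucially on the extension property ensured by Fra\"iss\'e amalgamation for the class of finite posets in case (d), and on the analogous freedom of extension coming from the explicit descriptions of $\Aut(\B_n)$ and $\Aut(\mC_n)$ in cases (b) and (c). Schmerl's classification assures us that we are always in one of these four structures, each rigid enough to be analysable yet flexible enough for the explicit construction to succeed.
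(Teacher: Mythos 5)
You correctly identify the structure of the argument --- invoke Schmerl's classification and handle each family separately --- but your central tactical claim is wrong, and it steers the rest of the proposal off course. You assert that because the set of pairs generating a dense free subgroup is nowhere dense in $\Aut(H)^2$ (Proposition~\ref{PropNowhereDense}), ``a Baire category argument is not available and the witnesses have to be constructed explicitly.'' That does not follow, and in fact the paper's proof \emph{is} a Baire category argument. The point is that the ambient space is chosen wrongly: one does not work in all of $\Aut(A)\times\Aut(A)$ but in a carefully selected $G_\delta$ subset $\mX$ (itself a Polish space) where the relevant set becomes comeager. Concretely, for $\B_n$ one restricts to positive automorphisms whose associated permutations of $\{1,\dots,n\}$ generate $S_n$; for $\B_\omega$ to automorphisms whose first-coordinate permutation has no finite cycles; for $\mC_n$ to positive automorphisms; and for $\D$ to correctly orbitally incomparable automorphisms. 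The general Theorems~\ref{FirstGeneralTheorem} and~\ref{SecondGeneralTheorem} then reduce the comeagerness claim to a ``Key Lemma'' asserting that finite partial isomorphisms of the restricted class can always be extended to realize a prescribed density task and to break a prescribed reduced word. A set nowhere dense in the full product can perfectly well be comeager inside a nowhere-dense Polish subspace, so there is no obstruction.

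Your fallback --- explicit back-and-forth constructions --- is not crazy in spirit (the Key Lemma verifications are essentially back-and-forth steps), but as written it misses the part that carries all the difficulty. You never identify \emph{which} subclass of automorphisms admits the free extension steps you need: extending an arbitrary $f_0\in\Ism(\D)$ while keeping some future word from collapsing requires exactly the orbital incomparability bookkeeping of Lemmas~\ref{lem:orbitextension1}--\ref{lem:incompextension}, and extending in $\B_n$ requires positivity together with delicate word surgery (Lemma~\ref{KillingIsom}). Related to this, your treatment of $\B_n$ with $n\geq 2$ finite is the most underdeveloped, yet it is the hardest case in the paper: there are no cyclically dense elements there (Remark~\ref{BnHasNoCyclicallyDenseElements}), and the word encoding a permutation of $S_n$ cannot be kept short (the discussion of the complexity $c_n\to\infty$), so the ``dynamically rich $f$ plus completing $g$'' picture you sketch for the other cases does not transfer. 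Finally, a small but real point: for finite $n$, $\Aut(\A_n)=S_n$ is finite and cannot contain a free subgroup of rank two, so the claim is not ``essentially trivial'' there but vacuously excluded --- the theorem should be read for countably infinite $H$, consistent with the abstract's wording.

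So: right skeleton, but the load-bearing idea --- relativize the Baire argument to the correct $G_\delta$ class of automorphisms in each of cases (b)--(d), and prove the corresponding extension (Key) lemma --- is absent, and the proposal as given does not close the gap.
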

{\color{black}In fact, we prove} something more: at each case $A=\A_n,\B_n,\mC_n,\D$, one can find a certain Polish subset $\mathcal{X}\subseteq \Aut(A)\times\Aut(A)$ such that the set of all pairs 
$$\{(f,g)\in\mathcal{X}: \mbox{$f$ and $g$ freely generate a dense subgroup of }\Aut(A)\}$$
is comeager in $\mathcal{X}$.
Moreover, in the case of $A=\A_n,\B_\omega,\mC_n,\D$, we show that for each $m$ the set of all cyclically dense elements $\overline{g}\in\Aut(A)^m$ for the diagonal action is comeager in $\Aut(A)^m$.

\section{Preliminaries}

We use standard set-theoretic notation, see \cite{Gao,kechris}. By $\omega$ we denote the first infinite ordinal number, which we identify with the set of natural numbers $\omega=\{0,1,2,\dots\}$. 

Let $A$ be a countable structure such that its finitely generated substructures are finite (this is true for relational structures which are of our interest here). By $\Aut(A)$ we denote the automorphism group of $A$. A function $f:X\to Y$ which is an isomorphism between two finite substructures $X$ and $Y$ of $A$ is called \emph{partial isomorphism of $A$}. The set of all partial isomorphisms is denoted by $\Ism(A)$. A countable structure $A$ is called \emph{ultrahomogeneous}, if every partial isomorphism $f\in\Ism(A)$ can be extended to an automorphism $\bar{f}\in\Aut(A)$.

Since $A$ is countable, the automorphism group $\Aut(A)$ can be viewed as a subgroup of the symmetric group $S_\infty$ of natural numbers. We consider the usual topology on $\Aut(A)$, inherited from $S_\infty$, generated by the basic sets of the form $\{\bar{f}\in\Aut(A): f\subset\bar{f}\}$ where $f\in\Ism(A)$. It turns out that this is a Polish topology, i.e. completely meatrizable and separable, as $\Aut(A)$ is a closed subgroup of $S_\infty$. For details see for example \cite{Gao}.  

By a \emph{word of letters $a,b$}, we mean each sequence of the form
\begin{equation}\label{words}
w(a,b)=c_1^{n_1}c_2^{n_2}c_3^{n_3}...c_k^{n_k}
\end{equation}
where $n_1,...,n_{k}\in\Z$ and $c_1,...,c_k\in\{a,b\}$. A word $w(a,b)$ of the form \eqref{words} is called \emph{irreducible}, if $n_1,...,n_k\neq 0$ and $c_i\neq c_{i+1}$ for $i=1,...,k-1$. By the \emph{length} $\vert w\vert$ of a word $w$ we mean the value $n_1+...+n_k$.

An automorphism group $\Aut(A)$ is \emph{freely topologically 2-generated} if it contains two elements $f$ and $g$ which freely generate a dense subgroup of $\Aut(A)$, i.e. two functions ${f},{g}$ such that:
\begin{itemize}
\item  for every irreducible word $w(a,b)$, the natural automorphism $w(f,g)$ is not the identity function $\on{id}$;
\item the subgroup $\langle f,g\rangle=\{w(f,g):w\mbox{ is a word}\}$ is dense in $\Aut(A)$.
\end{itemize} 

A set $M$ in a Polish space $X$ is called { \emph{meager}} if $M$ { is} a countable union of nowhere dense subsets of $X$. A set $C$ in $X$ is called {\emph{comeager}} if it is a complement of a meager set. By { the} Baire category theorem comeager sets are non-empty in Polish spaces and we can see them as large sets. 

A {\emph{partially ordered set}} is a set $X$ with { a} relation $\leq$ {that} is reflexive, antisymmetric and transitive. Equivalently one may consider { a \emph{strict partial order}} $<$, that is a reflexive and transitive relation. If $\leq$ is partial order, then the corresponding strict partial order is given by $a<b\iff (a\leq b$ and $a\neq b)$. {\color{black}Similarly}, if $<$ is a strict partial order, then the corresponding partial order $\leq$ is given by $a\leq b\iff a<b$ or $a=b$. For $a,b\in X$, we write $a\perp b$ whenever $a$ is not comparable with $b$, that is neither $a\leq b$ nor $b\leq a$.

If $f$ is a function, then by $\dom f$ and $\img f$ we denote the domain and range of ${f}$, {\color{black}respectively}. We identify functions with their graphs. Therefore if $f,g$ are functions, then $g\subset f$ means that $f$ is an extension of $g$.  Clearly, a union $f\cup g$ of two functions is a function  iff $f$ and $g$ are equal on the common part $\dom f\cap\dom g$ of their domains. In particular $f\cup g$ {\color{black}is function} if $\dom f\cap\dom g=\emptyset$. Moreover, if $f,g$ are one-to-one functions and $\dom f\cap\dom g=\emptyset=\img f\cap\img g$, then $f\cup g$ is one-to-one. 

For a family $\mathcal{X}\subseteq\Aut(A)$ put {\color{black}$$\mathcal{X}^{<\omega}:=\{f\in\Ism(A):f\subset\bar{f}\mbox{ for some }\bar{f}\in\mathcal{X}\}.$$}  Clearly, $\mathcal{X}^{<\omega}=\{f_{\vert X}:X\mbox{ is a finite substructure of }A\}$, where $f_{\vert X}$ is the restriction of $f$ to $X$. Note that  if $A$ is ultrahomogeneous, then $\Ism(A)=\Aut(A)^{<\omega}$.

On a subset $\mathcal{X}\subseteq A$, we consider the topology induced from $\Aut(A)$.     Its basis consists of sets $\{\bar{f}\in \mathcal{X}:f\subset \bar{f}\}$, $f\in\mathcal{X}^{<\omega}$. Similarly, if $\mX\subset \Aut(A)\times \Aut(A)$, then we set $\mX^{<\omega}:=\{(f,g)\in\Ism(A)\times\Ism(A):f\subset \tilde{f}\;\mbox{and}\;g\subset\tilde{g}\;\mbox{for some }(\tilde{f},\tilde{g})\in\mX\}$ and the basis of the topology on $\mX$ consists of sets $\{(\bar{f},\bar{g})\in \mathcal{X}:f\subset \bar{f},\;g\subset \bar{g}\}$, $(f,g)\in\mathcal{X}^{<\omega}$. 

If $f,g$ are functions and $w(a,b)$ is a word, then by $w(f,g)$ we denote the function whose domain consist of points for which the appropriate compositions have sense. Moreover, if we write $"w(f,g)(x)"$, then we automatically assume that $x$ is in the domain of $w(f,g)$.

Below we prove two general theorems that we use further for particular cases.
 
\begin{theorem}\label{FirstGeneralTheorem}
Let $A$ be a countable ultrahomogeneous structure and let $\mathcal{X}\subseteq\Aut(A)\times \Aut(A)$. Assume that for every $(f_0,g_0)\in\mathcal{X}^{<\omega}$, every $h\in\Ism(A)$ and every irreducible word $w(a,b)$ there are $(f_1,g_1)\in\mathcal{X}^{<\omega}$ such that\begin{enumerate}
\item $f_0\subset f_1$ and $g_0\subset g_1$;
\item there is a word $\bar{w}(a,b)$ such that $h\subset\bar{w}(f_1,g_1)$;
\item $w(f_1,g_1)(y)\neq y$ for some $y\in A$.
\end{enumerate}
Then the set
\[
\{(f,g)\in\mathcal{X}:f\text{ and }g\text{ freely generate a dense subgroup of }\Aut(A)\}
\]
is comeager in $\mathcal{X}$. 
\end{theorem}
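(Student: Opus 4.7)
The plan is a standard Baire category argument. Observe first that $(f,g)\in \Aut(A)^2$ freely generates a dense subgroup of $\Aut(A)$ if and only if $w(f,g)\neq\on{id}$ for every nontrivial irreducible word $w(a,b)$ and, for every $h\in\Ism(A)$, there exists a word $\bar w(a,b)$ with $h\subset\bar w(f,g)$. The first clause expresses freeness as the avoidance of every nontrivial relation; the second expresses density via the canonical basis $\{\{\bar k\in\Aut(A):h\subset\bar k\}:h\in\Ism(A)\}$ of the topology on $\Aut(A)$, using ultrahomogeneity to pass from an arbitrary automorphism to a partial isomorphism.

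Accordingly, for each nontrivial irreducible word $w$ I would set
\[
U_w=\{(f,g)\in\mathcal{X}:w(f,g)(y)\neq y\text{ for some }y\in A\},
\]
and for each $h\in\Ism(A)$ I would set
\[
V_h=\bigcup_{\bar w}\{(f,g)\in\mathcal{X}:h\subset\bar w(f,g)\},
\]
where $\bar w$ ranges over all words in letters $a,b$. The target reduces to showing that $\bigcap_w U_w\cap\bigcap_h V_h$ is comeager in $\mathcal{X}$, and since the set of irreducible words and $\Ism(A)$ are both countable, it suffices to show that each $U_w$ and each $V_h$ is open and dense in $\mathcal{X}$.

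Openness is routine: for any fixed word $\bar w$ and points $x,z\in A$, the condition $\bar w(f,g)(x)=z$ depends only on the values of $f$ and $g$ at the finite set of points produced by the evaluation chain of $\bar w$ at $x$, so it cuts out a basic open subset of $\mathcal{X}$; both $U_w$ and $V_h$ are (countable) unions of such conditions. Density is where the hypothesis is invoked. Let $N=\{(f,g)\in\mathcal{X}:f_0\subset f,\ g_0\subset g\}$ be a nonempty basic open subset of $\mathcal{X}$ determined by some $(f_0,g_0)\in\mathcal{X}^{<\omega}$. To hit $U_w$, I would apply the hypothesis to $(f_0,g_0)$, the given word $w$, and any $h\in\Ism(A)$ (e.g.\ $h=\emptyset$); any pair $(f,g)\in\mathcal{X}$ extending the resulting $(f_1,g_1)$ lies in $N\cap U_w$ by (i) and (iii). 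To hit $V_h$, I would apply the hypothesis to $(f_0,g_0)$, the given $h$, and any irreducible word (e.g.\ $w=a$); by (i) and (ii), any extension of $(f_1,g_1)$ in $\mathcal{X}$ lies in $N\cap V_h$. Such extensions exist by the definition of $\mathcal{X}^{<\omega}$.

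I do not anticipate a serious obstacle; the theorem simply formalizes the back-and-forth flavor of the argument by isolating the three conditions (preserving membership in $\mathcal{X}$, extending a word to capture a prescribed partial isomorphism, and witnessing nontriviality of any prescribed irreducible word) that must be verified in each concrete instance. The only mildly subtle point is openness, which rests on the evaluation of a fixed word at a fixed point factoring through finitely many coordinates of $(f,g)$; this is immediate from how composition is defined.
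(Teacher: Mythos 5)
Your proof is correct and follows essentially the same Baire category argument as the paper; the only cosmetic difference is that you split the paper's single family $U_{w,h}$ (requiring both $w(f,g)\neq\on{id}$ and $h\subset\bar w(f,g)$ for some $\bar w$) into the two families $U_w$ and $V_h$ and check openness explicitly, which the paper leaves implicit by arguing directly that each $U_{w,h}$ contains a dense open set.
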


\begin{proof} For any word $w(a,b)$ and $h\in\Ism(A)$, define
\[
U_{w,h}:=\{(f,g)\in\mathcal{X}: w(f,g)\neq\on{id}\text{ and }h\subset\bar{w}(f,g)\text{ for some word }\bar{w}\}.
\]
We show that for any $(f_0,g_0)\in\mathcal{X}^{<\omega}$, the intersection $$U_{w,h}\cap \{(f,g)\in\mathcal{X}:f_0\subset f,\;g_0\subset g\}$$ contains a nonempty and open subset of $\mathcal{X}$. Let $(f_0,g_0)\in\mathcal{X}^{<\omega}$. By the assumption there are $(f_1,g_1)\in\mathcal{X}^{<\omega}$ fulfilling (i)--(iii). Therefore the set
\[
\{(f,g)\in\mathcal{X}:f_1\subset f\text{ and }g_1\subset g\}
\]
is contained in 
$$
U_{w,h}\cap \{(f,g)\in\mathcal{X}:f_0\subset f,\;g_0\subset g\}.
$$
Hence each $U_{w,h}$ contains an open and dense subset and, in particular,
$$
\bigcap_{w}\bigcap_{h}U_{w,h}=\{(f,g)\in\mathcal{X}:f\text{ and }g\text{ freely generate a dense subgroup of }\Aut(A)\}
$$
 is comeager in $\mathcal{X}$.
\end{proof}

In the following, if $f$ is a one-to-one function, $X$ is a set, and $k\in\Z$, then writing $f^k(X)$ we automatically assume that for every $x\in X$, the composition $f^k(x)$ is well defined, i.e., $X$ is subset of the natural domain of $f^k$.
\begin{theorem}\label{SecondGeneralTheorem}
Let $A$ be a countable ultrahomogeneous structure and let $\mathcal{X}$ be a $G_\delta$ subset of $\Aut(A)$. Assume that for every $f_0\in\mathcal{X}^{<\omega}$ and  every nonempty finite set $X\subset A$ there are $f_1\in\mathcal{X}^{<\omega}$ and $k\in\Z$ such that
\begin{enumerate}
\item $f_0\subset f_1$;
\item for any two $u,v\in\Ism(A)$ such that $\dom u\cup\img u\subset X$ and $\dom v\cup\img v\subset f_1^k(X)$ the union $u\cup v$ belongs to $\Ism(A)$. 
\end{enumerate}
Then for every $m\in\N$, the set of cyclically dense elements $\bar{g}\in\Aut(A)^m$ for the diagonal action is comeager in $\mathcal{X}\times \Aut(A)^m$.\\
Assume additionally that for any $(f_0,g_0)\in\mX^{<\omega}\times \Ism(A)$ and an irreducible word $w(a,b)$ there is $(f_1,g_1)\in\mX^{<\omega}\times \Ism(A)$ which extend $f_0$ and $g_0$ and such that ${w}(f_1,g_1)(y)\neq y$ for some $y$. Then the set
$$
\{(f,g)\in\mX\times\Aut(A):\mbox{$f$ and $g$ freely generates a dense subgroup of }\Aut(A)\}
$$
is comeager in $\mX\times\Aut(A)$.
\end{theorem}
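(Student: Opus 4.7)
My plan is to handle the two parts separately: the first by showing that the cyclical density witnesses form a dense $G_\delta$, and the second by reduction to Theorem \ref{FirstGeneralTheorem}.

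For the first part, I would enumerate a countable basis $\{U_j\}_{j\in\N}$ of $\Aut(A)^m$ and, for each $j$, put
$$
D_j := \bigcup_{k\in\Z}\{(f,\bar g)\in\mX\times\Aut(A)^m : (f^k g_1 f^{-k},\dots,f^k g_m f^{-k})\in U_j\}.
$$
Since $(f,\bar g)\mapsto f^k g_i f^{-k}$ is continuous for each fixed $k$, $D_j$ is open in $\mX\times\Aut(A)^m$. Because $\mX$ is $G_\delta$ in $\Aut(A)$, the product $\mX\times\Aut(A)^m$ is Polish, and $\bigcap_j D_j$ equals the set of pairs $(f,\bar g)$ for which $\bar g$ is cyclically dense via $f$. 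So it suffices to show each $D_j$ is dense. Fix a basic neighbourhood $W$ of $\mX\times\Aut(A)^m$ determined by $(f_0,\bar g_0)\in\mX^{<\omega}\times\Ism(A)^m$, and a basic $U_j$ determined by $(h_1,\dots,h_m)\in\Ism(A)^m$. Let $X\subset A$ be a finite set containing every $\dom g_{0,i}\cup\img g_{0,i}\cup\dom h_i\cup\img h_i$. Apply the hypothesis to $f_0$ and $X$, obtaining $f_1\in\mX^{<\omega}$ and $k\in\Z$ satisfying (i) and (ii). For each $i\le m$, define a partial isomorphism $v_i:f_1^k(\dom h_i)\to f_1^k(\img h_i)$ by $v_i(f_1^k(x)):=f_1^k(h_i(x))$, so $v_i=f_1^k\circ h_i\circ f_1^{-k}$ and $\dom v_i\cup\img v_i\subseteq f_1^k(X)$. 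Hypothesis (ii) then gives $g_{1,i}:=g_{0,i}\cup v_i\in\Ism(A)$. Extending $f_1$ to some $f\in\mX$ and each $g_{1,i}$ to some $g_i\in\Aut(A)$ (possible by the definition of $\mX^{<\omega}$ and by ultrahomogeneity of $A$), one checks directly that $f^{-k} g_i f^{k}\supseteq h_i$ for every $i$; hence $(f,\bar g)\in W\cap D_j$, with witness index $-k$.

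For the second part, I would apply Theorem \ref{FirstGeneralTheorem} to $\mathcal{Y}:=\mX\times\Aut(A)$, noting that $\mathcal{Y}^{<\omega}=\mX^{<\omega}\times\Ism(A)$ since every partial isomorphism of $A$ extends to a full automorphism. Given $(f_0,g_0)\in\mathcal{Y}^{<\omega}$, an irreducible word $w$, and $h\in\Ism(A)$, first invoke the additional assumption to get $(f_1',g_1')\in\mathcal{Y}^{<\omega}$ extending $(f_0,g_0)$ with $w(f_1',g_1')(y)\neq y$ for some $y$; since $w(f_1',g_1')(y)$ is a concrete element of $A$, the inequality persists under any further extension. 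Next apply the density construction from the first part (with $m=1$ and $h_1:=h$) to extend $(f_1',g_1')$ to $(f_1,g_1)\in\mathcal{Y}^{<\omega}$ with $h\subseteq f_1^{-k}g_1 f_1^{k}$ for some $k\in\Z$, so that condition (ii) of Theorem \ref{FirstGeneralTheorem} holds with $\bar w(a,b):=a^{-k}ba^{k}$. Condition (iii) survives by the preceding remark, and (i) is automatic. Theorem \ref{FirstGeneralTheorem} now produces the required comeager set.

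The main obstacle I anticipate is the bookkeeping around the sign of $k$: the hypothesis speaks of $f_1^k(X)$, while realising $h_i$ inside a conjugate requires pushing forward via $f^{-k}$, so one must be careful to use witness index $-k$ rather than $k$ in the cyclical density formula (a harmless convention check, since the set $\{k:\dots\}$ is symmetric under $k\mapsto-k$). The remaining steps are routine gluing of partial isomorphisms with disjoint domains followed by lifts to full automorphisms via ultrahomogeneity.
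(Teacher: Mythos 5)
Your proposal is correct and follows essentially the same route as the paper's own proof: the first part is the same Baire-category density argument (your $D_j$ indexed by a basis and the paper's $U_{\bar h}$ indexed by $\bar h\in\Ism(A)^m$ are interchangeable), and the second part is the same reduction to Theorem \ref{FirstGeneralTheorem}, merely with the two extension steps taken in the opposite order (which is harmless, since each step's conclusion persists under further extension, as you correctly observe). Your sign bookkeeping around $k$ versus $-k$ is also exactly what the paper does with $\bar h\subseteq f^{-k}\bar g f^k$.

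One point worth adding: your first part stops at showing that $\bigcap_j D_j$ is a comeager $G_\delta$ subset of $\mX\times\Aut(A)^m$. The paper then invokes the Kuratowski--Ulam theorem (using that $\mX$ is Polish, since it is $G_\delta$ in $\Aut(A)$) to produce a single $f\in\mX$ whose fibre $\{\bar g : (f,\bar g)\in\bigcap_j D_j\}$ is comeager in $\Aut(A)^m$; that fibre consists of cyclically dense $\bar g$'s, which is the form of the conclusion actually used in the paper's corollaries. You should include that last step explicitly, as comeagerness of the set of pairs in the product does not by itself yield comeagerness of the set of good $\bar g$'s in $\Aut(A)^m$ without a Kuratowski--Ulam argument.
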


\begin{proof}
For every $\overline{h}\in\Ism(A)^m$, define
\[
U_{\bar{h}}:=\{(f,\bar{g})\in\mathcal{X}\times\Aut(A)^m: \overline{h}\subseteq f^{-k}\bar{g}f^k\mbox{ for some }k\in\Z\}.
\]
We show that for every $f_0\in\mathcal{X}^{<\omega}$ and $\bar{u}\in\Ism(A)^m$, the intersection $$U_{\bar{h}}\cap \{(f,\bar{g})\in\mathcal{X}\times \Aut(A)^m:f_0\subset f,\;\bar{u}\subseteq \bar{g}\}$$ contains a nonempty and open set.

Let $f_0\in\mathcal{X}^{<\omega}$ and $\bar{u}\in\Ism(A)^m$. By the assumption,  for the set $X:=\bigcup_{i\leq m}(\dom u_i\cup\img u_i\cup\dom h_i\cup \img h_i)$, there is $f_1\in\mathcal{X}^{<\omega}$ and $k\in\Z$  fulfilling (i) and (ii). Define $v_i:=f_1^{k}h_if_1^{-k}:f_1^k(\dom h_i)\to f_1^{k}(\img h_i)$ for $i\leq m$. By (ii) we have that $u_i\cup v_i\in\Ism(A)$. Therefore 
\[
\{(f,\bar{g})\in\mathcal{X}\times\Aut(A)^m:f_{1}\subseteq f\text{ and }u_i\cup v_i\subset g_i,\;i\leq m\}
\]
is contained in 
\[
U_{\bar{h}}\cap \{(f,\bar{g})\in\mathcal{X}\times \Aut(A)^m:f_0\subset f,\;\bar{u}\subseteq \bar{g}\}
\].
This shows that $U_{\bar{h}}$ contains dense open subset of $\mathcal{X}\times\Aut(A)^m$. Hence
\[
\bigcap_{\bar{h}\in\Ism(A)^m}U_{\bar{h}}=\{(f,g)\in\mathcal{X}\times\Aut(A)^m:\{(f^kg_1f^{-k},\dots, f^kg_mf^{-k}):k\in\Z\}\text{ is dense in }\Aut(A)\}
\]
is a comeager $G_\delta$ subset of $\mathcal{X}\times\Aut(A)^m$. Since $\mathcal{X}$ is a Polish space, by the Kuratowski-Ulam theorem \cite[Theorem 8.41]{kechris} there is $f\in\mathcal{X}$ such that the set 
\[
\{g\in\Aut(A)^m:\{(f^kg_1f^{-k},\dots, f^kg_mf^{-k}):k\in\Z\}\text{ is dense in }\Aut(A)\}
\]
is comeager and it consists entirely of cyclically dense elements for the diagonal action. 

Now assume the additional part. We show that the assumptions of Theorem \ref{FirstGeneralTheorem} are satisfied for the family $\mX\times\Aut(A)$. Clearly $(\mX\times\Aut(A))^{<\omega}=\mX^{<\omega}\times \Ism(A)$.
Now take $(f_0,g_0)\in\mX^{<\omega}\times\Ism(A)$, $h\in\Ism(A)$ and an irreducible word $w$. Similarly as in the first part of the proof (for $m=1$), we can show that there exist an extension $(\hat{f},\hat{g})\in\mX^{<\omega}\times\Ism(A)$ of $(f_0,g_0)$, and $k\in\Z$ such that $h\subset \hat{f}^{-k}\hat{g}\hat{f}^k$. Now by additional assumption, there exists extension $(f_1,g_1)\in\mX^{<\omega}\times \Ism(A)$ such that ${w}(f_1,g_1)(y)\neq y$ for some $y$. Hence $(f_1,g_1)$ satisfies (i)-(iii) of Theorem \ref{FirstGeneralTheorem}.
\end{proof}

The hardest part of our work is to show that the assertions of Theorem \ref{FirstGeneralTheorem} and Theorem \ref{SecondGeneralTheorem} are fulfilled. In next sections we prove that we are always able to extend partial isomorphisms in an appropriate way.


\section{$\B_n$ -- the finite antichain of chains}\label{SectionBn}
In this section we deal with $\B_n$ {\color{black}for $n\in\N$}. Recall that {\color{black}$\B_n=(\{1,2,\dots,n\}\times\Q,\leq)$, where $\leq $ is} defined by
$$
(k,p)\leq (l,q)\iff k=l\text{ and }p\leq q.
$$
{\color{black}Later we will sometimes identify $\B_n$ with the underlying set $\{1,2,\dots,n\}\times\Q$.}\\
Let $\pi_1(k,p)=k$ and $\pi_2(k,p)=p$ be projections on the first and the second coordinate, respectively. If $A\subset\B_n$ and $k=1,...,n$, then the $k$-th section of $A$ is denoted by $A^k:=\{p\in\Q:(k,p)\in A\}$. We say that  $f\in \Ism(\B_n)$ is \emph{positive}, if $\pi_2(f(k,p))>p$ for all $(k,p)\in \on{dom}f$. By {$\Ism_+(\B_n)$ we denote the family of all positive partial automorphisms of $\B_n$. Similarly we define $\Ism_+(\Q)$, the family of all positive partial isomorphism of $\Q$ (in fact, $\Q$ and $\B_1$ can be identified).}

We say that $A>B$, where $A,B\subset\R$, if for every $a\in A$ and $b\in B$, $a>b$. If $M\in\R$, then $A>M$ means $A>\{M\}$. {Moreover, if $A,B\subset \{1,...,n\}\times \R$, then $A>B$ means that $\pi_2(A)>\pi_2(B)$.}

The following shows us what is the form of automorphisms of $\B_n$ {(by $S_n$ we denote the set of all permutations of $\{1,...,n\}$)}. 
\begin{proposition}\label{pp1}~ 
{\color{black}
\begin{enumerate}
\item Let $f:\B_n\to\B_n$. Then $f\in\Aut(\B_n)$ iff there exist $f_1,\dots,f_n\in\Aut(\Q)$ and $\tau_f\in S_n$ such that $f(k,p)=(\tau_f(k),f_k(p))$ for every $(k,p)\in\B_n$.\\
\item Let $f:X\to \B_n$ for some finite set $X\subset\B_n$. Then $f\in\Ism(\B_n)$ iff there exist $f_1,\dots,f_n\in\Ism(\Q)$ and $\tau_f\in S_n$ such that $f(k,p)=(\tau_f(k),f_k(p))$ for every $(k,p)\in X $. Moreover, $f\in\Ism_+(\B_n)$ iff each $f_i\in\Ism_+(\Q)$. 
\end{enumerate}}
\end{proposition}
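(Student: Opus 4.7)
The claim decomposes into two directions in each of (1) and (2), plus the positivity refinement in (2). In both parts the ``if'' direction is immediate: if $f(k,p)=(\tau_f(k),f_k(p))$ with each $f_k$ order-preserving on its domain in $\Q$, then $(k,p)\leq(l,q)$ means $k=l$ and $p\leq q$, which forces $\tau_f(k)=\tau_f(l)$ and $f_k(p)\leq f_k(q)$, i.e.\ $f(k,p)\leq f(l,q)$; the converse implication uses injectivity of $\tau_f$ together with the fact that each $f_k$ is an order-isomorphism. So all the content is in the ``only if'' directions.

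For Part~(1), my plan is to exploit the special feature of $\B_n$ that \emph{comparability} is an equivalence relation whose equivalence classes are precisely the columns $\{k\}\times\Q$. This is clear from the definition: $(k,p)$ and $(l,q)$ are comparable iff $k=l$. Since any $f\in\Aut(\B_n)$ preserves both $\leq$ and its negation, it preserves comparability and incomparability, hence permutes these equivalence classes. This gives a well-defined $\tau_f\in S_n$ with $\pi_1(f(k,p))=\tau_f(k)$. Setting $f_k(p):=\pi_2(f(k,p))$, the restriction of $f$ to $\{k\}\times\Q$ is an order-preserving bijection onto $\{\tau_f(k)\}\times\Q$, so $f_k\in\Aut(\Q)$.

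For Part~(2), the cleanest route is to invoke the ultrahomogeneity of $\B_n$ (part of Theorem~\ref{SchmerlTheorem}): any partial isomorphism $f:X\to\B_n$ extends to some $\bar f\in\Aut(\B_n)$, and applying Part~(1) to $\bar f$ yields $\tau_{\bar f}\in S_n$ and $\bar f_1,\dots,\bar f_n\in\Aut(\Q)$; then $\tau_f:=\tau_{\bar f}$ and $f_k:=\bar f_k|_{X^k}\in\Ism(\Q)$ give the required decomposition of $f$. A direct argument is also available: partial isomorphisms preserve comparability and incomparability, so $f$ maps each non-empty section $X^k$ injectively into a single column $\{l\}\times\Q$, defining $\tau_f(k):=l$, and the projection onto the second coordinate is an order-preserving injection $X^k\to\Q$, hence an element of $\Ism(\Q)$; for indices $k$ with $X^k=\emptyset$ one simply extends $\tau_f$ to a permutation of $\{1,\dots,n\}$ in any way and takes $f_k=\emptyset$.

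The positivity refinement is immediate from the decomposition: $f\in\Ism_+(\B_n)$ says $\pi_2(f(k,p))>p$ for all $(k,p)\in X$, and since $\pi_2(f(k,p))=f_k(p)$ this is equivalent to $f_k(p)>p$ for every $p\in X^k$ and every $k$, i.e.\ $f_k\in\Ism_+(\Q)$ for each $k$. The only mildly delicate point in the whole argument is the bookkeeping in (2) when some columns of $X$ are empty, since then $\tau_f$ is not uniquely determined by $f$; but the proposition only asserts \emph{existence}, so this is unproblematic once one observes that any extension of the partial map $\{k:X^k\neq\emptyset\}\to\{1,\dots,n\}$ to a full permutation works.
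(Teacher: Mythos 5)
Your proof is correct and follows, in substance, the same path as the paper: Part (2) proceeds exactly as in the paper (extend to an automorphism via ultrahomogeneity, apply Part (1), and restrict; the non-uniqueness of $\tau_f$ when some sections $X^k$ are empty is the content of the paper's Remark~\ref{fr1}). For Part (1), your observation that comparability is an equivalence relation with classes the columns $\{k\}\times\Q$, preserved by any automorphism, is a more conceptual packaging of what the paper extracts directly from the chain of equivalences $(k=l \text{ and } p\le q)\iff f(k,p)\le f(l,q)$; both arguments deliver the same facts --- that $\tau_f$ is well-defined and injective and that each column restriction lies in $\Aut(\Q)$ --- so this is a stylistic rather than a mathematical difference.
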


\begin{proof} {\color{black}We first prove {(i)}}.
Assume that $f\in\on{Aut}(B_n)$. Then for every $k,l$ and $p,q$,
\begin{equation}\label{f1}
k=l \text{ and }p{\color{black}\leq}q\iff f(k,p){\color{black}\leq}f(l,q)\iff\pi_1(f(k,p))=\pi_1(f(l,q))\text{ and }\pi_2(f(k,p)){\color{black}\leq}\pi_2(f(l,q)).
\end{equation}
For every $k$, define $\tau_f(k):=\pi_1(f(k,p))$ for some $p$. By (\ref{f1}), the function $\tau_f$ is well defined (i.e., its value does not depend on the choice of $p$), and is one-to-one, and hence $\tau_f\in S_n$. For every $k$ and every $p$, put $f_k(p):=\pi_2(f(k,p)).$ Again by (\ref{f1}), $f_k$ is order preserving and consequently $f_k$ is one-to-one. We show that it is also onto. Take any $p'$ and let $p$ be such that $f(k,p)=(\tau_f(k),p')$. Then $f_k(p)=\pi_2(f(k,p))=p'$. 
Hence $f_k\in\on{Aut}(\Q)$. 
{\color{black}Now if $f(k,p)=(\tau_f(k),f_k(p))$ for some $\tau_f\in S_n$ and $f_1,...,f_n\in\Aut(\Q)$, then it is routine to check that $f\in\Aut(\Q)$.

Now we prove {(ii)}.
If $f\in\Ism(\B_n)$, then we can extend it to $\tilde{f}\in\Aut(\B_n)$, and find $\tau_{\tilde{f}}$ and $\tilde{f}_1,...,\tilde{f}_n$ as in (1.). Then the restrictions $f_k:=\tilde{f}_k\vert_{X^k}$ belongs to $\Ism(\Q)$, and, clearly, $f(k,p)=(\tau_{\tilde{f}}(k),f_k(p))$ for all $(k,p)\in X$. The opposite implication is obvious, as well as the last part of the statement.}
\end{proof}

{\color{black}
\begin{remark}\label{fr1}\emph{
In the case when $\pi_1(\on{dom}f)$ is proper subset of $\{1,...,n\}$, a permutation $\tau_f$ may not be uniquely determined - $\tau_f=\tau_{f'}$ for some extension $f'\in\Aut(\B_n)$. Hence, unless otherwise stated, by $\tau_f$ we consider any such permutation.}
\end{remark}}


The first lemma shows that we can always extend any positive {\color{black}$f\in\Ism(\B_n)$ by} adding a given point to its domain or range in such a way that the extension is still positive. 
\begin{lemma}\label{fil4}
Let {\color{black}$f\in\Ism_+(\B_n)$, $M\in\R$} and $A\subset \Q$ be finite.
\begin{enumerate}
\item If $(k,x)\notin{\color{black}\on{dom}f}$, then there is $y>x$, $y\notin A\cup{\color{black}\pi_2(\on{dom}f)}$ such that {\color{black}$\tilde{f}:=f\cup\{((k,x),(\tau_f(k),y))\}$ is a partial isomorphism. In particular, $\tilde{f}\in\Ism_+(\B_n)$.}
\item If $(k,y)\notin\on{rng}f$, $y>M$ and {\color{black}$\on{dom}f>M$}, then there is $x\in(M,y)\setminus(A\cup\pi_2(\on{rng}f))$ such that {\color{black}$\tilde{f}:=f\cup\{((\tau^{-1}_f(k),x),(k,y))\}$ is a partial isomorphism. In particular, $\tilde{f}\in\Ism_+(\B_n)$.}
\end{enumerate}
\end{lemma}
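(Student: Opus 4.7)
The plan is to apply Proposition~\ref{pp1} to factor $f$ as $f(l,p)=(\tau_f(l),f_l(p))$ with each $f_l\in\Ism_+(\Q)$, reducing each part of the lemma to a density argument inside a single copy of $\Q$. Order-preservation of a positive extension of $f$ that adds a new point in the $k$-th section depends only on the one-dimensional map $f_k$; points in the other sections are incomparable with the new point and so impose no constraint.

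For (i), I fix $(k,x)\notin\dom f$ and enumerate the (finite) $k$-th section of $\dom f$ as $x_1<\dots<x_m$, so that $x_j<x<x_{j+1}$ for a unique $j\in\{0,\dots,m\}$, with the conventions $x_0:=-\infty$, $x_{m+1}:=+\infty$. For $\tilde f$ to be an order-preserving injection in $\Ism_+(\B_n)$ it suffices to pick $y$ strictly between $f_k(x_j)$ and $f_k(x_{j+1})$ and strictly above $x$; the strict inequalities automatically give $y\neq f_k(x_i)$ for all $i$, so $(\tau_f(k),y)\notin\rng f$. I therefore look for $y$ in the interval
$$
I:=(\max(x,f_k(x_j)),\,f_k(x_{j+1})).
$$
The key point is that $I$ is nonempty: by positivity $x<x_{j+1}<f_k(x_{j+1})$, and by monotonicity $f_k(x_j)<f_k(x_{j+1})$. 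Since $I\cap\Q$ is infinite and $A\cup\pi_2(\dom f)$ is finite, density of $\Q$ lets me pick $y\in I\cap\Q\setminus(A\cup\pi_2(\dom f))$.

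For (ii), the plan is the mirror image on the range side. Setting $l:=\tau_f^{-1}(k)$, I enumerate the $l$-th section of $\dom f$ as $x_1<\dots<x_m$; by hypothesis every $x_i>M$. The $k$-th section of $\rng f$ is then $f_l(x_1)<\dots<f_l(x_m)$, and since $(k,y)\notin\rng f$ there is $j$ with $f_l(x_j)<y<f_l(x_{j+1})$. I will select $x$ in
$$
J:=(\max(M,x_j),\,\min(y,x_{j+1})),
$$
which simultaneously forces $x_j<x<x_{j+1}$ (order preservation and injectivity), $x<y$ (positivity of $\tilde f$), and $x>M$ (the constraint stated). Nonemptiness of $J$ reduces to four pairwise inequalities: $M<x_{j+1}$ because $x_{j+1}\in\dom f>M$, $M<y$ by hypothesis, $x_j<x_{j+1}$ trivially, and $x_j<y$ via $x_j<f_l(x_j)<y$ by positivity of $f_l$. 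Density of $\Q$ then supplies an $x\in J\cap\Q\setminus(A\cup\pi_2(\rng f))$.

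In both parts the only genuinely delicate step is verifying nonemptiness of the chosen rational interval, which each time reduces to the positivity of $f$ together with the hypothesis $\dom f>M$ (in (ii)). Everything after that is a routine application of the density of $\Q$ and the finiteness of the sets to be avoided.
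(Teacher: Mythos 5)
Your proof is correct and follows essentially the same route as the paper: use Proposition~\ref{pp1} to reduce to the one-dimensional component map, locate the consecutive pair of existing domain (resp.\ range) points flanking the new point, and exploit positivity plus density of $\Q$ to choose a suitable image (resp.\ preimage) avoiding a finite set. Your write-up is in fact slightly more careful than the paper's in part~(ii): you explicitly set $l=\tau_f^{-1}(k)$ and work with $f_l$ (where the paper loosely writes $f_k$), and you build the bound $M$ directly into the interval $J$, whereas the paper relies on $\dom f>M$ to make the lower endpoint $\ge M$ implicitly.
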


\begin{proof}Let $f_k$ be as in Proposition \ref{pp1} (i.e., the $k$-th coefficient of $f$).\\ 
(i) Let{ $a=\max(\on{dom}f_k\cap(-\infty,x))$ and $b=\min(\on{dom}f_k\cap(x,\infty))$ (here and in the sequel $\max\emptyset=-\infty$, $\min\emptyset=\infty$, $f_k(-\infty)=f_k^{-1}(-\infty)=-\infty$ and $f_k(\infty)=f_k^{-1}(\infty)=\infty$). Clearly $f_k(a)<f_k(b)$ and ${\color{black}x<b\leq f_k(b)}$. Then we choose any rational $y$ between $\max\{f_k(a),x\}$ and $f_k(b)$ which is not in $A\cup\pi_2(\on{dom}f)$.

(ii) Let $a=\max(\on{rng}f_k\cap(-\infty,y)))$ and $b=\min(\on{rng}f_k\cap(y,\infty))$. Then we choose any rational $x$ between $f_k^{-1}(a)$ and $\min\{f_k^{-1}(b),y\}$ which is not in $A\cup\pi_2(\on{rng}f)$.}
\end{proof}
As immediate consequence, we get
\begin{lemma}\label{f45}
Let {\color{black}$f\in\Ism_+(\B_n)$} and $p_0\in\Q$. Then there is {\color{black} $\tilde{f}\in\Ism_+(\B_n)$} such that $(k,p_0)\in\on{dom}\tilde{f}\cap\on{rng}\tilde{f}$ {\color{black}for} $k=1,...,n$, and $f\subseteq\tilde{f}$.
\end{lemma}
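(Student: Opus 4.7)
The plan is to iterate Lemma \ref{fil4} to populate first the domain and then the range of $f$ with the points $(1,p_0),\dots,(n,p_0)$. I would begin by invoking Proposition \ref{pp1}(ii) to fix once and for all a single permutation $\tau\in S_n$ such that $f(k,p)=(\tau(k),f_k(p))$ on $\dom f$, extending $\tau$ arbitrarily to a full permutation of $\{1,\dots,n\}$ if $\pi_1(\dom f)$ is proper (cf.\ Remark \ref{fr1}). This choice of $\tau$ will be used at every subsequent extension step, ensuring consistency.

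\textbf{Phase 1 (filling the domain).} Set $g_0:=f$. For $k=1,\dots,n$: if $(k,p_0)\in\dom g_{k-1}$, set $g_k:=g_{k-1}$; otherwise apply Lemma \ref{fil4}(i) with $x=p_0$ (taking, say, $A=\emptyset$) to obtain $g_k\in\Ism_+(\B_n)$ extending $g_{k-1}$ and satisfying $g_k(k,p_0)=(\tau(k),y_k)$ for some $y_k>p_0$. The preconditions of (i) are trivially maintained from one iteration to the next, so $g:=g_n$ is a positive extension of $f$ with $(k,p_0)\in\dom g$ for every $k=1,\dots,n$.

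\textbf{Phase 2 (filling the range).} Since $\dom g$ is finite, pick a real $M<\min\pi_2(\dom g)$; then $M<p_0$ and $\dom g > M$ in the second-coordinate sense required by Lemma \ref{fil4}(ii). Set $h_0:=g$. For $k=1,\dots,n$: if $(k,p_0)\in\rng h_{k-1}$, set $h_k:=h_{k-1}$; otherwise apply Lemma \ref{fil4}(ii) with $y=p_0$ to produce $h_k\in\Ism_+(\B_n)$ extending $h_{k-1}$ by a pair $((\tau^{-1}(k),x_k),(k,p_0))$ with $M<x_k<p_0$. The key invariant is that every new preimage satisfies $x_k>M$, so $\dom h_k > M$ continues to hold, preserving the precondition of (ii) across all $n$ iterations. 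Declaring $\tilde f:=h_n$ yields a positive extension of $f$ with $(k,p_0)\in\dom\tilde f\cap\rng\tilde f$ for all $k$.

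There is no substantive obstacle in this argument; it is essentially a bookkeeping exercise built on Lemma \ref{fil4}. The only things to verify along the way are that positivity is preserved at each step (automatic from the lemma, since Phase~1 puts images strictly above $p_0$ and Phase~2 puts preimages strictly below $p_0$), that the uniform lower bound $M$ persists through Phase~2, and that the permutation $\tau$ chosen at the outset remains a valid choice of permutation associated with every intermediate extension.
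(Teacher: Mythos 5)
Your proof is correct and is exactly the argument the paper has in mind: the paper states Lemma \ref{f45} as an ``immediate consequence'' of Lemma \ref{fil4} and gives no proof, and your two-phase iteration (adding $(k,p_0)$ to the domain via \ref{fil4}(i) for each $k$, then to the range via \ref{fil4}(ii) with a suitably small $M$) is the natural way to fill in that gap, with the $M$-invariant and the fixed choice of $\tau$ handled correctly.
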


The next lemma shows that we can extend any {\color{black} $f\in\Ism_+(\B_n)$} so that a given point can be moved as far as we want.
\begin{lemma}\label{f25}
Let {\color{black}$f\in\Ism_+(\B_n)$}, $x\in\B_n$, $m\in\N$ and $M\in\R$.
Then there exists {\color{black} $\tilde{f}\in\Ism_+(\B_n)$} {\color{black}with $f\subseteq\tilde{f}$ and}
\begin{enumerate}
\item $x\in\on{dom}\tilde{f}$;
\item there is $l\geq m$ such that $\tilde{f}^l(x)$ is well defined and $\pi_2(\tilde{f}^l(x))>M$. 
\end{enumerate}
\end{lemma}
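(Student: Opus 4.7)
The plan is to prove Lemma \ref{f25} by iteratively extending $f$ along the orbit of $x$. Write $x=(k_0,p_0)$, fix a permutation $\tau\in S_n$ compatible with $f$ (as allowed by Proposition \ref{pp1} and Remark \ref{fr1}), and set $k_j:=\tau^j(k_0)$. Put $R:=\max\bigl(\{M,p_0\}\cup\pi_2(\dom f\cup\rng f)\bigr)$.

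I would construct a chain $f=f^{(0)}\subseteq f^{(1)}\subseteq\cdots$ in $\Ism_+(\B_n)$ together with points $x_0=x,x_1,x_2,\ldots$ satisfying $\pi_1(x_j)=k_j$ and $f^{(j)}(x_j)=x_{j+1}$. At step $j$, if $x_j\in\dom f^{(j)}$ set $f^{(j+1)}:=f^{(j)}$ and $x_{j+1}:=f^{(j)}(x_j)$; otherwise invoke Lemma \ref{fil4}(i) to obtain an extension $f^{(j+1)}\supset f^{(j)}$ that adds $x_j$ to the domain with image $x_{j+1}=(k_{j+1},p_{j+1})$. By the proof of that lemma, $p_{j+1}$ may be any rational (outside a given finite exceptional set) in the open interval
$$\bigl(\max\{f^{(j)}_{k_j}(a_j),\,p_j\},\;f^{(j)}_{k_j}(b_j)\bigr),$$
where $a_j,b_j$ are the nearest neighbors of $p_j$ in $\dom f^{(j)}_{k_j}$ (with the convention $f^{(j)}_{k_j}(\infty):=\infty$). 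I always choose $p_{j+1}$ greedily: as close to the upper bound as permitted, or — if the bound is $\infty$ — as any rational exceeding $R$.

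The heart of the argument is the claim that some $p_l>R$ is reached within finitely many steps. Once this occurs, $(k_l,p_l)$ lies above every element of $\dom f\cup\rng f$ in its column and, by strict monotonicity of the orbit within each column, above every previously added point in $\dom f^{(l)}_{k_l}$. From that step onward the upper bound in Lemma \ref{fil4}(i) is $\infty$, so I may continue the construction freely, choosing each subsequent $p_{l'+1}>p_{l'}$ arbitrarily large. Prolonging this free phase until the total length reaches $m$ and setting $\tilde f:=f^{(l)}$ then yields the desired extension, since $\pi_2(x_l)>R\geq M$.

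The main obstacle is proving this escape claim. I argue by contradiction: suppose that every $p_j\leq R$. Then for each column $k$ in the $\tau$-cycle of $k_0$ the strictly increasing subsequence $(p_j:k_j=k)$ has a finite supremum $L_k\leq R$. A crucial observation is that the previously added points in column $k_j$ are precisely the earlier iterates $p_{j'}$ with $k_{j'}=k_j$, $j'<j$, which by monotonicity are all strictly below $p_j$; hence $b_j$ lies in the original $\dom f_{k_j}$ and $f^{(j)}_{k_j}(b_j)=f_{k_j}(b_j)$ depends only on the fixed data $f$. Note also that $b_j=\infty$ at some step would already allow immediate escape above $R$, contradicting our assumption, so $b_j$ is always a fixed finite element of $\dom f_{k_j}$. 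Since $p_{j+1}$ can be chosen arbitrarily close (from below) to $f_{k_j}(b_j)$ and yet $p_{j+1}\leq L_{k_{j+1}}$, passing to the limit inside the subsequence in column $k_{j+1}$ gives $L_{k_{j+1}}\geq f_{k_j}(b_j^{*})$, where $b_j^{*}$ is the stable limit of $b_j$. A short case analysis depending on whether $L_{k_j}\in\dom f_{k_j}$, combined with positivity of $f_{k_j}$, shows $b_j^{*}\geq L_{k_j}$ and hence $f_{k_j}(b_j^{*})>L_{k_j}$. Thus $L_{k_{j+1}}>L_{k_j}$ for every $j$, and chaining these inequalities around the $\tau$-cycle of $k_0$ produces $L_{k_0}<L_{k_1}<\cdots<L_{k_{r-1}}<L_{k_0}$, a contradiction.
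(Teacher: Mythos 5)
Your proof is correct, but it establishes the crucial ``escape'' claim by a genuinely different mechanism than the paper. The paper's construction fixes a specific lower threshold at each step (choosing $r_1$ strictly above the \emph{next domain point} $a^j_{i_0}$ in the current column), and then argues combinatorially: each full trip around the $\sigma$-cycle of length $l$ pushes the orbit past at least one more element of $\dom f_j$, so after at most $l\cdot t_j$ steps the orbit clears $\dom f_j$ entirely. This gives an explicit a priori bound on the number of steps. Your argument instead works greedily at the \emph{upper} end of the admissible interval and runs a limit argument: assuming the orbit stays bounded by $R$, the column-wise suprema $L_k$ would have to form a strictly increasing cycle $L_{k_0}<L_{\sigma(k_0)}<\cdots<L_{k_0}$, which is absurd. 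This is more analytic and gives no explicit bound, but it is cleaner in one respect: once you note that $b_j$ always lies in the original $\dom f_{k_j}$, strictly exceeds $p_j$, and eventually stabilizes at some $b_k^\ast$, the inequality $b_k^\ast\ge L_k$ (hence $f_k(b_k^\ast)>L_k$) follows immediately from positivity without your ``short case analysis''. Two small points to tighten: (1) the greedy choice must be made with a tolerance tending to $0$ (e.g. $f_{k_j}(b_j)-p_{j+1}<1/(j+1)$) for the passage to the limit $L_{\sigma(k)}\ge f_k(b_k^\ast)$ to be legitimate — ``as close as permitted'' alone is not a single well-defined choice; and (2) your ``crucial observation'' about the previously added points implicitly uses that once the orbit leaves $\dom f$ it never re-enters, which holds because Lemma~\ref{fil4}(i) lets you keep each new second coordinate outside $\pi_2(\dom f^{(j)})$ — worth saying explicitly.
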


\begin{proof}
By {\color{black}Lemma \ref{fil4}}, we can assume that each section $(\on{dom}f)^k\neq \emptyset$. Set $\sigma:=\tau_f$ and 
enumerate $\on{dom}f=\{(k,a^k_i):k\leq n, i=1,\dots,t_k\}$ and $\on{rng}f=\{(\sigma(k),b^k_i):k\leq n, i=1,\dots,t_k\}$ in the way that $f(k,a_i^k)=(\sigma(k),b_i^k)$, and $a_i^k<a^k_{i+1}$ and (consequently) $b^k_i<b^k_{i+1}$.
Assume first that $x=(j,r)\notin\dom f$. If $r>a^{j}_{t_{j}}$, then we stop the procedure.

Now assume that $r\in(a^j_{i-1},a^j_{i})$ for some $i_0=1,2,\dots,t_j$ where $a^j_0=b^j_0=-\infty$. Since $a^j_{i_0}<b^j_{i_0}$ (because $f$ is positive), there is $r_1\in \Q\setminus\pi_2(\on{dom}f\cup\on{rng}f)$ such that $\max\{a^j_{i_0},b^j_{i_0-1}\}<r_1<b^j_{i_0}$.  Define a one point extension $f_1$ of $f$ such that $f_1(x)=(\sigma(j),r_1)=:x_1$.
If $r_1>a^{\sigma(j)}_{t_{\sigma(j)}}$, then we stop the procedure.

Now assume $r_1\in(a^{\sigma(j)}_{i-1},a^{\sigma(j)}_{i})$, for some $i=1,2,\dots,t_{\sigma(j)}$. Since $a^{\sigma(j)}_i<b^{\sigma(j)}_i$, there is $r_2\in \Q\setminus\pi_2(\on{dom}f_1\cup\on{rng}f_1)$ such that $\max\{a^{\sigma(j)}_i,b^{\sigma(j)}_{i-1}\}<r_2<b^{\sigma(j)}_i$.  Define a one point extension $f_2$ of $f_1$ such that $f_2(x_1)=(\sigma^2(j),r_2)=:x_2$.
If $r_2>a^{\sigma^2(j)}_{t_{\sigma^2(j)}}$, then we stop the procedure. If it is not the case, then we proceed as earlier.

We claim that such procedure stops after finitely many steps (which means that for some $n_0\in\N$, $r_{n_0}>a^{\sigma^{n_0}(j)}_{t_{\sigma^{n_0}(j)}}$). Suppose that it is not the case.
Let $l\leq n$ be such that $\sigma^l(j)=j$ (clearly, such $l$ exists because $\sigma$ has a finite rank). Then $$x_l=f_l(x_{l-1})=(\sigma^l(j),r_l)=(j,r_l)$$
 and $r_l>r_{l-1}>...>r_1>a_{i_0}^j$ (the value $i_0$ is that chosen in the first step of the construction). Therefore $r_l\in(a_{i_1-1}^j,a_{i_1}^j)$ for some $i_1>i_0$. Similarly, after next $l$ steps, we get $r_{2l}>a_{i_2}^j$ for some $i_2>i_1$. Thus after at most $l\cdot t_j$ steps, the procedure must stop. This gives a contradiction. Hence let $n_0\in\N\cup\{0\}$ be such that (we additionally set $r_0:=r$ and $f_0:=f$)
$$
r_{n_0}>a^{\sigma^{n_0}(j)}_{t_{\sigma^{n_0}(j)}}.
$$
Then $x_{n_0}=(\sigma^{n_0}(j),r_{n_0})=f_{n_0}^{n_0}(x)$ and $f_{n_0}$ is a positive partial isomorphism. Now let $m_0>n_0\in\N$ be such that $m_0\geq m$, and choose $r_{n_0+1}<...<r_{m_0}$ such that additionally
\begin{equation}\label{eq:aaa1}
r_{n_0+1}>\max\{M, r_{n_0}, \max\{{\color{black}\pi_2(\on{dom}f_{n_0}\cup\on{rng}f_{n_0})}\}\}.
\end{equation}
Finally, define $$s(\sigma^{n_0+i}(j),r_{n_0+i}):=(\sigma^{n_0+i+1}(j),r_{n_0+i+1})$$
for $i=0,...,m_0-n_0-1$. In other words, $s$ behaves according to the diagram:
$$
(\sigma^{n_0}(j),r_{n_0})\stackrel{s}{\to}(\sigma^{n_0+1}(j),r_{n_0+1})\stackrel{s}{\to}\dots\stackrel{s}{\to}(\sigma^{m_0}(j),r_{m_0}).
$$
Clearly, $\tilde{f}:=f_{n_0}\cup s$ satisfies the thesis (in particular, by (\ref{eq:aaa1}) it is well defined and is partial isomorphism).

Now assume that $x\in\on{dom}f$. Then we can procedure as above, but considering $\tilde{x}:=f^k(x)$ instead of $x$, where $k\in\N$ is such that $f^k(x)$ is well defined but does not belong to $\on{dom}f$.
\end{proof}

Now, combining {\color{black}Lemmas \ref{fil4} and \ref{f25}}, we get that we can move the whole finite set as far as we want.
\begin{corollary}\label{f25'}
Let {\color{black}$f\in\Ism_+(\B_n)$, $C\subset\B_n$ be finite, {\color{black}$m,t\in\N$} and $M\in\R$}.
Then there exists {\color{black}$\tilde{f}_+\in\Ism(\B_n)$}, $f\subseteq\tilde{f}$ such that
\begin{enumerate}
\item $C\subset\on{dom}\tilde{f}$;
\item there is $l\geq m$ such that {\color{black}$\tilde{f}^{l+t}(x)$} is well defined for every $x\in C$ and {\color{black}$\tilde{f}^i(C)>M$ for $i=l,...,l+t-1$}. 
\end{enumerate}
\end{corollary}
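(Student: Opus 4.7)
My plan is to push the points of $C$ above the threshold $M$ one at a time by iterated use of Lemma~\ref{f25}, synchronise them at a common iterate, and then extend the orbits by $t$ further steps via Lemma~\ref{fil4}(i). Enumerate $C=\{x_1,\ldots,x_s\}$ and set $l_0:=m$, $f_0:=f$.

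Inductively I would build a chain $f_0\subseteq f_1\subseteq\cdots\subseteq f_s$ in $\Ism_+(\B_n)$ together with integers $l_0\leq l_1\leq\cdots\leq l_s$ such that, for every $j\leq s$, the iterate $f_j^{l_j}(x_p)$ is well defined and $\pi_2(f_j^{l_j}(x_p))>M$ for each $p\leq j$. The inductive step from $f_{j-1}$ to $f_j$ has two substeps. First, apply Lemma~\ref{f25} to the data $(f_{j-1},x_j,l_{j-1},M)$ to obtain an intermediate $f_j'\supseteq f_{j-1}$ in $\Ism_+(\B_n)$ and some $l_j\geq l_{j-1}$ with $x_j\in\on{dom} f_j'$ and $\pi_2((f_j')^{l_j}(x_j))>M$. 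Second, for each $p<j$ the iterate $(f_j')^i(x_p)$ may only be defined for $i\leq l_{j-1}$, so I would invoke Lemma~\ref{fil4}(i) iteratively to add any still-undefined images $(f_j')^i(x_p)$ for $l_{j-1}\leq i<l_j$ to the domain, obtaining $f_j\in\Ism_+(\B_n)$ in which $f_j^{l_j}(x_p)$ is well defined for every $p\leq j$. Positivity of $f_j$ gives $\pi_2(f_j^{l_j}(x_p))\geq\pi_2(f_{j-1}^{l_{j-1}}(x_p))>M$ for $p<j$.

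Once $f_s$ is obtained, I would set $l:=l_s$ and apply Lemma~\ref{fil4}(i) a further $t$ times per point $x_p$ to produce the final $\tilde f\in\Ism_+(\B_n)$ for which $\tilde f^{l+t}(x_p)$ is well defined for each $p\leq s$. Positivity of $\tilde f$ then forces $\pi_2(\tilde f^i(x_p))>\pi_2(\tilde f^l(x_p))>M$ for every $i\in\{l,\ldots,l+t-1\}$ and $p\leq s$, so $\tilde f$ satisfies both (i) and (ii).

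The main technical point is the book-keeping in the inductive step: each fresh invocation of Lemma~\ref{f25} may strictly enlarge the common iterate, obliging us to re-extend every previously processed orbit up to the new level $l_j$. What makes this painless is that Lemma~\ref{fil4}(i) freely allows us to add any fresh point to the domain without destroying positivity, and positivity itself guarantees that once an orbit has crossed above $M$ it remains above $M$ under all subsequent iterations, so no additional control on intermediate images is needed.
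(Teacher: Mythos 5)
Your proof is correct and takes essentially the same approach as the paper: iterate Lemma~\ref{f25} over the elements of $C$, take the maximum of the resulting indices, and then use Lemma~\ref{fil4}(i) to extend all orbits up to level $l+t$, relying on positivity to guarantee the second coordinates stay above $M$. The paper's version simply applies Lemma~\ref{f25} to all points first and then does the orbit extension all at once, whereas you interleave the two, but this is a cosmetic difference in bookkeeping rather than a different argument.
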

\begin{proof}
Let $C=\{x_1,...,x_k\}$. Using Lemma \ref{f25} $k$ times, we get an extension {\color{black}$\tilde{f}\in\Ism_+(\B_n)$} so that for each $i=1,...,k$, there is $l_i\geq m$ with $\pi_2(\tilde{f}^{l_i}(x_i))>M$. Now let $l:=\max\{l_1,...,l_k\}$. Using Lemma \ref{fil4}, we can extend (if needed) $\tilde{f}$ to {\color{black} $\overline{f}\in\Ism_+(\B_n)$} so that for each $i=1,...,k$, the points $\overline{f}^{l_i}(x_i),\;\overline{f}^{l_i+1}(x_i),...,{\color{black}\overline{f}^{l+t-1}}(x_i)\in\on{dom}\overline{f}$. {\color{black}Clearly, it also holds }$\pi_2(\overline{f}^{l}(x_i))>M$.
\end{proof}

Next we are going to prove that for every {\color{black}irreducible} word $w(a,b)$ we can define positive partial automorphisms $r,s$ with $w(s,r)\neq\on{id}$, where $\on{id}$ is the identity function. We use two auxiliary results. Since $\Q$ can be {\color{black}identified with $\B_1$}, Lemma \ref{fil4} automatically implies
\begin{corollary}\label{f4}
Let {\color{black}$f\in\Ism_+(\Q)$}, $M\in\R$ and $A\subset \Q$ be finite.
\begin{itemize}
\item[(i)] If $x\notin\on{dom}f$, then there is $y>x$, $y\notin A\cup\on{dom}f$ such that {\color{black}$f\cup\{(x,y)\}\in\Ism_+(\B_n)$}.
\item[(ii)] If $y\notin\on{rng}f$, $y>M$ and {\color{black}$\on{dom}f>M$}, then there is $x\in(M,y)\setminus(A\cup\on{rng}f)$ such that {\color{black}$f\cup\{(x,y)\}\in\Ism_+(\Q)$}.
\end{itemize}
\end{corollary}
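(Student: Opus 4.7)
The plan is to observe that Corollary \ref{f4} is nothing more than Lemma \ref{fil4} specialized to the degenerate case $n=1$. The poset $\B_1=\{1\}\times\Q$ is canonically isomorphic to $\Q$ via $(1,p)\mapsto p$, and this isomorphism transports $\Ism(\B_1)$ onto $\Ism(\Q)$ and $\Ism_+(\B_1)$ onto $\Ism_+(\Q)$. Under this dictionary, a partial isomorphism $f$ of $\Q$ corresponds to the partial isomorphism $\tilde f\colon (1,p)\mapsto (1,f(p))$ of $\B_1$, and the positivity condition $\pi_2(\tilde f(1,p))>p$ reads simply $f(p)>p$. Moreover, since $S_1$ contains only the identity permutation, the $\tau_f$ appearing in Lemma \ref{fil4} is automatically $\on{id}$, so that the extension $\{((k,x),(\tau_f(k),y))\}$ collapses (for $k=1$) to the single ordered pair $(x,y)$.

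Once this identification is set up, I would simply invoke Lemma \ref{fil4}. For part (i), view $x\in\Q$ as the point $(1,x)\notin\on{dom}\tilde f$ in $\B_1$ and apply Lemma \ref{fil4}(i) with $k=1$: it yields $y>x$ outside $A\cup\pi_2(\on{dom}\tilde f)=A\cup\on{dom}f$ such that $\tilde f\cup\{((1,x),(1,y))\}\in\Ism_+(\B_1)$, and translating back through the isomorphism gives $f\cup\{(x,y)\}\in\Ism_+(\Q)$, as required. Part (ii) is handled symmetrically by invoking Lemma \ref{fil4}(ii) with $k=1$ and the same bound $M$, noting that $\on{dom}f>M$ in $\Q$ corresponds exactly to $\on{dom}\tilde f>M$ in $\B_1$.

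There is no real obstacle in this argument; everything reduces to verifying that the identification $\B_1\cong\Q$ preserves the structures and terminology involved, which is routine. The only point worth a second glance is that positivity on $\B_1$ coincides with positivity on $\Q$, but this is immediate because $\B_1$ has a single section, namely $\Q$ itself, so the definitions literally agree.
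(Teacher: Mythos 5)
Your proposal is correct and follows exactly the route the paper takes: the paper's ``proof'' is the single remark preceding the corollary, namely that $\Q$ can be identified with $\B_1$, so Lemma~\ref{fil4} automatically implies the statement. Your write-up merely spells out the canonical identification $(1,p)\mapsto p$, the triviality of $\tau_f\in S_1$, and the matching of the positivity conditions, all of which is the intended (and correct) reasoning.
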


\begin{lemma}\label{f5}Let $M\in\R$ and let $w(a,b)$ be {\color{black}an irreducible word}. Then there are {\color{black} $r,s\in \Ism_+(\Q)$} such that $\on{dom}r,\on{dom}s>M$ and for some $p>M$, {\color{black} $w(s,r)(p)\neq p$}.
 \end{lemma}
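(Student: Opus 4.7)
The plan is to encode the computation of $w(s,r)(p)$ as a chain of distinct rationals greater than $M$ and read off $s$ and $r$ from this chain. First I write the irreducible word in fully expanded form $w = d_1 d_2 \cdots d_L$ with each $d_j \in \{a, a^{-1}, b, b^{-1}\}$; irreducibility translates into the fact that consecutive letters $d_j, d_{j+1}$ are never mutually inverse, because within a single block $c_i^{n_i}$ they coincide while between blocks the base generators $c_i \neq c_{i+1}$ differ. I then pick any $p_L \in \Q \cap (M,\infty)$ and inductively build $p_{L-1}, p_{L-2}, \dots, p_0 \in \Q \cap (M,\infty)$ together with growing positive partial isomorphisms $s, r$, arranging that at each stage $d_j(p_j) = p_{j-1}$, where $a, a^{-1}$ are interpreted via $s$ and $b, b^{-1}$ via $r$.

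At stage $j$ exactly one pair gets appended: for $d_j = a$ the pair $(p_j, p_{j-1})$ is added to $s$, for $d_j = a^{-1}$ the pair $(p_{j-1}, p_j)$ is added to $s$, and the two cases involving $b$ are analogous with $r$ in place of $s$. In the cases $d_j = a$ and $d_j = b$ the unknown $p_{j-1}$ is supplied by Corollary~\ref{f4}(i), which chooses it above $p_j$ within the order-preservation window; in the cases $d_j = a^{-1}$ and $d_j = b^{-1}$ it is supplied by Corollary~\ref{f4}(ii), which chooses it in $(M, p_j)$. Enlarging the avoidance set in each application to contain all previously chosen $p_i$ keeps the sequence $(p_i)$ pairwise distinct, and order-preservation together with positivity of the extended $s$ or $r$ come automatically from the conclusions of those corollaries.

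The principal obstacle is to verify that the preconditions of Corollary~\ref{f4} hold at every stage: that the pivot $p_j$ lies outside $\on{dom} s$ when part (i) is applied to $s$, outside $\on{rng} s$ when (ii) is applied to $s$, and symmetrically for $r$; together with $\on{dom} s, \on{dom} r > M$ whenever (ii) is invoked. The condition $\on{dom} s, \on{dom} r > M$ is immediate by induction since every $p_i$ is chosen in $(M,\infty)$. For the pivot condition, direct bookkeeping shows that $p_j$ could only have been inserted into the relevant domain or range at the immediately preceding stage $j+1$, and this would force $d_{j+1}$ to be the letter inverse to $d_j$; the irreducibility of $w$ rules this out, and this is precisely where that hypothesis is used. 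Once every stage succeeds, the chain of equations $d_j(p_j) = p_{j-1}$ composes to $w(s,r)(p_L) = p_0$; distinctness of the $p_i$'s gives $p_0 \neq p_L$, so taking $p := p_L$ yields the desired witness.
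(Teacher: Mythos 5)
Your proof is correct and is essentially the same as the paper's: both process the irreducible word from its rightmost letter, building $s$ and $r$ by one-point extensions via Corollary~\ref{f4}, with irreducibility guaranteeing that the pivot at each stage avoids the relevant domain or range. The only difference is bookkeeping -- the paper tracks an explicit invariant (conditions (i)--(iv)) on where $w(s,r)(p)$ lies relative to $\dom s, \rng s, \dom r, \rng r$, whereas you deduce the same non-clash from pairwise distinctness of the chain plus the observation that $p_j$ can enter the forbidden set only at stage $j+1$.
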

\begin{proof}
We prove the statement by induction, with the following additional requirements:
\begin{itemize}
\item[(i)] if $w(a,b)=b^{n_k}a^{m_k}...b^{n_1}a^{m_1}$ and $n_k<0$, then $w(s,r)(p)\notin \on{dom}s\cup\on{rng}s\cup \on{rng}r$;
\item[(ii)] if $w(a,b)=b^{n_k}a^{m_k}...b^{n_1}a^{m_1}$ and $n_k>0$, then $w(s,r)(p)\notin \on{dom}s\cup\on{rng}s\cup \on{dom}r$;
\item[(iii)] if $w(a,b)=a^{m_k}...b^{n_1}a^{m_1}$ and $m_k< 0$, then $w(s,r)(p)\notin \on{dom}r\cup\on{rng}r\cup\on{rng}s$;
\item[(iv)] if $w(a,b)=a^{m_k}...b^{n_1}a^{m_1}$ and $m_k> 0$, then $w(s,r)(p)\notin \on{dom}r\cup\on{rng}r\cup\on{dom}s$.
\end{itemize}
{\color{black}(in the above formulation, we allow $m_1=0$, and in this situation we understand that the word ends with $b^{n_1}$).}\\
Let us prove it by induction with respect to the length of the word.

If $w(a,b)=a$ or $w(a,b)=a^{-1}$ or $w(a,b)=b$ or $w(a,b)=b^{-1}$, then we set $s(M+1)=M+2$ and $r(M+3)=M+4$. Then, clearly, the thesis holds (for $p=M+1$, {\color{black}$p=M+2$}, $p=M+3$ and {\color{black}$p=M+4$}, respectively). Assume that for the word $w(a,b)= b^{n_k}a^{m_k}...b^{n_1}a^{m_1}$ we have desired functions $r,s$ and a point $p$. Assume first {\color{black}that  $n_k<0$} (so {\color{black}(i)} is satisfied).
Then $w(s,r)(p)\neq p$ and $w(s,r)(p)\notin \on{dom}s\cup\on{rng}s\cup\on{rng}r$.

If $w'(a,b)=b^{-1} w(a,b)$, then we use {\color{black}Corollary \ref{f4}(ii)} for the point $x:=w(s,r)(p)$, function $r$, value $M$ and a set $A=\on{dom}s\cup\on{rng}s\cup\{p\}${\color{black}, and} we get an one-point extension $\tilde{r}=r\cup\{(x',x)\}$. Then $w'(s,\tilde{r})(p)=x'\neq p$ and $w'(s,\tilde{r})(p)=x'\notin \on{dom}s\cup\on{rng}s\cup \on{rng}\tilde{r}$. Hence (i) is satisfied.

If $w''(a,b)=aw(a,b)$, then we use {\color{black}Corollary \ref{f4}(i)} for the point $x=w(s,r)(p)$, function $s${\color{black}, and} a set $A=\on{dom}r\cup\on{rng}r\cup\{p\}$, and we get an one-point extension $\tilde{s}=s\cup\{(x,x'')\}$. Then $w''(\tilde{s},r)(p)=x''\neq p$ and $w''(\tilde{s},r)(p)=x''\notin \on{dom}r\cup\on{rng}r\cup\on{dom}\tilde{s}$. Hence (iv) is satisfied.

If $w'''(a,b)=a^{-1}w(a,b)$, then we use {\color{black}Corollary \ref{f4}(ii)} for the point $x:=w(s,r)(p)$, function $s$, value $M$ and a set $A=\on{dom}r\cup\on{rng}r\cup\{p\}$, and we get an one-point extension $\tilde{r}=r\cup\{(x''',x)\}$. Then $w'''(s,\tilde{r})(p)=x'''\neq p$ and $w'''(s,\tilde{r})(p)=x'''\notin \on{dom}r\cup\on{rng}r\cup \on{rng}\tilde{s}$).

In similar manner we can deal with the rest cases - using {\color{black}Corollary \ref{f4}} we get appropriate one-point extensions of $r$ or $s$ which satsify appropriate conditions from $(i)-(iv)$ with the original $p$.
\end{proof}

\begin{lemma}\label{KillingWordsInBn}
Let $M\in\R$ and let $w(a,b)$ be a word, and let $\eta,\xi\in S_n$. Then there are {\color{black} $s,r\in \Ism_+(\B_n)$} such that
\begin{enumerate}
\item {\color{black}$\on{dom}s,\on{dom}r>M$};
\item $w(s,r)(x)\neq x$ for some $x\in\B_n$;
\item $\tau_s=\eta$ and $\tau_r=\xi$.
\end{enumerate}
\end{lemma}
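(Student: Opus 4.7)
The plan is to mimic the inductive scheme of Lemma \ref{f5}, replacing Corollary \ref{f4} by Lemma \ref{fil4} and carrying the extra bookkeeping required by the prescribed section permutations $\eta$ and $\xi$. First I would assume without loss of generality that $w(a,b)$ is a non-trivial irreducible word (otherwise (2) cannot be arranged, and $w(s,r)$ depends only on the reduction of $w$). To secure conditions (1) and (3), I would initialise the construction with small seeds $s_0, r_0 \in \Ism_+(\B_n)$ that already realise $\tau_{s_0} = \eta$ and $\tau_{r_0} = \xi$, for instance by placing one very high pair $(k, z_k) \mapsto (\eta(k), z'_k)$ in each section $k$ for $s_0$, and similarly for $r_0$ with $\xi$. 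Every subsequent one-point extension via Lemma \ref{fil4} is then forced to respect these $\tau$'s automatically, and all points added stay above $M$.

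Next I would split on whether $w(\eta,\xi) = \on{id}$ in $S_n$. If $w(\eta,\xi) \neq \on{id}$, pick $k_0$ with $w(\eta,\xi)(k_0) \neq k_0$ and a rational $p_0$ above $M$ and above all heights used in the seeds. Process the letters of $w$ applied to $(k_0, p_0)$ from the inside out, extending $s$ or $r$ by a single new point at each step via Lemma \ref{fil4}(i) for forward letters and Lemma \ref{fil4}(ii) for inverse letters. Once the whole composition is defined, $w(s, r)(k_0, p_0)$ lies in section $w(\eta,\xi)(k_0) \neq k_0$, so it differs from $(k_0, p_0)$ in the first coordinate.

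When $w(\eta,\xi) = \on{id}$, the first-coordinate trick fails and one must force the second coordinate of $w(s, r)(k_0, p_0)$ to differ from $p_0$. By Proposition \ref{pp1} we can write $w(s, r)(k_0, p_0) = (k_0, W_{k_0}(p_0))$, where $W_{k_0}$ is a word in the $2n$ component functions $s_1, \dots, s_n, r_1, \dots, r_n \in \Ism_+(\Q)$ obtained from the letters of $w$ by reindexing along the trajectory of $k_0$ under $\eta^{\pm1}$ and $\xi^{\pm1}$. The key combinatorial observation is that $W_{k_0}$ is itself non-trivial and irreducible in the free group on $\{s_1, \dots, s_n, r_1, \dots, r_n\}$: within an $a$-block of $w$ the successive letters of $W_{k_0}$ are all $s$-letters with a common exponent sign (so no cancellation), and at transitions between blocks the letters come from the disjoint variable sets $\{s_i\}$ and $\{r_j\}$. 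Once this is verified, the straightforward generalisation of Lemma \ref{f5} to $2n$ variables — same induction on length, same conditions (i)--(iv) phrased per variable, same use of Corollary \ref{f4} — yields functions $s_1, \dots, s_n, r_1, \dots, r_n \in \Ism_+(\Q)$ with domains above $M$ and some $p_0 > M$ satisfying $W_{k_0}(p_0) \neq p_0$. Reassembling via Proposition \ref{pp1}(ii) gives the required $s, r \in \Ism_+(\B_n)$.

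The main obstacle I expect is exactly the combinatorial check in the second case that the induced word $W_{k_0}$ remains irreducible under the reindexing by $\eta$ and $\xi$; the rest of the argument is a direct adaptation of Lemma \ref{f5} together with the section bookkeeping already handled by the initial seeds.
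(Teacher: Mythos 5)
Your proof is correct but takes a substantially longer route than the paper's. The paper's proof is a one-paragraph reduction: take $s,r\in\Ism_+(\Q)$ directly from Lemma~\ref{f5} and replicate the \emph{same} rational isomorphism in every section, setting $\tilde s(l,q)=(\eta(l),s(q))$ on $\{1,\dots,n\}\times\dom s$ and $\tilde r(l,q)=(\xi(l),r(q))$ on $\{1,\dots,n\}\times\dom r$. Because every section uses the same underlying function, the second coordinate of $w(\tilde s,\tilde r)(1,p)$ collapses to $w(s,r)(p)$ regardless of how $\eta,\xi$ scramble the sections, so $w(\tilde s,\tilde r)(1,p)=(w(\eta,\xi)(1),w(s,r)(p))\neq(1,p)$ in one line, with $\tau_{\tilde s}=\eta$, $\tau_{\tilde r}=\xi$ and domains above $M$ immediate. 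Your approach instead tracks the section-wise component functions, which forces a case split on whether $w(\eta,\xi)=\on{id}$, a (correct, but non-trivial) combinatorial check that the induced word $W_{k_0}$ in $2n$ letters stays reduced under the $\eta,\xi$-reindexing, and a $2n$-variable generalisation of Lemma~\ref{f5} together with seed bookkeeping to fix the $\tau$'s. All of that machinery is sound as far as I can see, and it would work; the paper simply avoids it entirely by the uniform-section trick. It is worth noting that the "obstacle" you flagged — irreducibility of $W_{k_0}$ — does in fact hold for the reason you give (no cancellation within an $a$- or $b$-block since the exponents share a sign, and no cancellation across a block boundary since the variable alphabets $\{s_i\}$ and $\{r_j\}$ are disjoint), so your route is viable, just unnecessarily heavy.
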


\begin{proof}
Let $r,s$ be as in Lemma \ref{f5}. For every $(l,q)\in\{1,...,n\}\times \on{dom}s$, define
$$
\tilde{s}(l,q)=(\eta(l),s(q))
$$
and for every $(l,q)\in\{1,...,n\}\times \on{dom}r$, define
$$
\tilde{r}(l,q)=(\xi(l),r(q)).
$$
Clearly, $\tilde{s},\tilde{r}\in{\color{black}\Ism_+(\B_n)}$, and, taking $p$ so that $w(s,r)(p)\neq p$, we have
$$w(\tilde{s},\tilde{r})(1,p)=(w(\eta,\xi)(1),w(s,r)(p))\neq (1,p)$$
\end{proof}
{\color{black} Now we introduce some} further denotations.
We say that a word $w(a,b)$ has {\color{black}\emph{positive terms}}, if $w$ is of the form
\begin{equation}\label{word}
w(a,b)=a^{n_j}b^{n_{j-1}}...b^{n_2}a^{n_1}
\end{equation}
where $n_1,...,n_j\geq 0$ and only $n_1,n_j$ can {\color{black}be} equal to $0$. If additionally $\eta,\xi\in S_n$, then we say that a family $A_1,...,A_{m+1}\subseteq \B_n$ is \emph{adjusted to $w$, $\eta$ and $\xi$}, if $m=|w|$ and for every $k=1,...,n$, $A_1^k<...<A_{m+1}^k$ and $$|A_1^k|=|A_2^{\eta(k)}|=...=|A_{n_1+1}^{\eta^{n_1}(k)}|=|A_{n_1+2}^{\xi(\eta^{n_1}(k))}|=...=|A_{n_1+n_2}^{\xi^{n_2}(\eta^{n_1}(k))}|=...=|A^{w(\eta,\xi)(k)}_{m+1}|$$ 
(we assumed above that $n_1>0$; if $n_1=0$, then we start with $|A_1^k|=|A_2^{\xi(k)}|=...$).\\
Each {\color{black}family of sets adjusted to $w$, $\eta$ and $\xi$ generates} a natural positive partial isomorphisms $s,r$ such that {\color{black}$w(s,r)(A_1)=A_{m+1}$}, according to the shape of $w$. More precisely, for every $k=1,..,n$, $s$ and $r$ act according to the following diagram: (we assume $n_1,n_j>0$; in other case we should omit the first or the last parts of terms):{\color{black}
$$
\{k\}\times A^k_1\stackrel{s}{\to}\{\eta(k)\}\times A^{\eta(k)}_2\stackrel{s}{\to}\dots\stackrel{s}{\to}\{\eta^{n_1}(k)\}\times A^{\eta^{n_1}(k)}_{n_1+1}\stackrel{r}{\to}
$$
$$
\stackrel{r}{\to}\{\xi(\eta^{n_1}(k))\}\times A^{\xi(\eta^{n_1}(k))}_{n_1+2}\stackrel{r}{\to}\dots\stackrel{r}{\to}\{\xi^{n_2}(\eta^{n_1}(k))\}\times A^{\xi^{n_2}(\eta^{n_1}(k))}_{n_1+n_2+1}\stackrel{s}{\to}\{\eta(\xi^{n_2}(\eta^{n_1}(k)))\}\times A^{\eta(\xi^{n_2}(\eta^{n_1}(k)))}_{n_1+n_2+2}\stackrel{s}{\to}\dots
$$  
$$
\dots\stackrel{s}{\to}\{w(\eta,\xi)(k)\}\times A^{w(\eta,\xi)(k)}_{m+1}.
$$}
We call such $s$ and $r$ as \emph{canonical} positive partial isomorphisms. 
{\color{black}It is well known that there are permutations $\eta,\xi\in S_n$ which generate $S_n$, i.e., such that for every $\tau\in S_n$, there is $w(a,b)$ with $w(\eta,\xi)=\tau$.}
\begin{lemma}\label{KillingIsom}
Let {\color{black}$M,M'\in\R$}, $\eta,\xi\in S_n$ be generators of $S_n$, $\tau\in S_n$, and let $A,B\subset\B_n$ be finite sets such that {\color{black}$M'<A^k<B^{\tau(k)}$} and $|A^k|=|B^{\tau(k)}|$ for every $k=1,...,n$. Then there exist {\color{black}$s,r\in\Ism_+(\B_n)$} and a word $w(a,b)$ of the form $w(a,b)=b^{-1}w'(a,b)b$ such that
\begin{itemize}
\item[(a)] $w(s,r)(A)=B$;
\item[(b)] $\tau_s=\eta$, $\tau_r=\xi$ and $w(\eta,\xi)={\color{black}\tau}$;
\item[(c)] $r(A),r(B)>M$;
\item[(d)] {\color{black}$\on{dom}r\cup \on{rng}r>M'$} and $\on{dom}s\cup\on{rng}s>M$.
\end{itemize}
\end{lemma}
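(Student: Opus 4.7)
The plan is to choose the word $w$ first and then construct $s,r$ directly by designing the trajectory of each element of $A$ under $w(s,r)$.

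First, since $\eta,\xi$ generate the finite group $S_n$, every element of $S_n$ is a product of positive powers (using finite order to eliminate inverses), so we may write $\xi\tau\xi^{-1}=w'(\eta,\xi)$ for some word $w'(a,b)=a^{n_j}b^{n_{j-1}}\cdots b^{n_2}a^{n_1}$ with all $n_i\ge 0$. Put $w=b^{-1}w'b$; then $w(\eta,\xi)=\xi^{-1}\xi\tau\xi^{-1}\xi=\tau$, settling (b). Let $m=|w'|$, so $|w|=m+2$.

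Next, the cardinality assumption $|A^k|=|B^{\tau(k)}|$ yields a natural order-preserving bijection $\phi\colon A\to B$ sending the $i$-th smallest element of $A^k$ to the $i$-th smallest of $B^{\tau(k)}$. For each $a\in A$, the required identity $w(s,r)(a)=\phi(a)$ forces a symbolic trajectory of $m+3$ successive points in $\B_n$: starting at $(k_0,p_0)=a$, with $m+1$ intermediate points $(k_i,p_i)$ for $i=1,\dots,m+1$, and ending at $(k_{m+2},p_{m+2})=\phi(a)$. The level sequence $k_i$ is determined by the letters of $w$ (applying $\xi$ for $b^{\pm 1}$, $\eta$ for $a^{\pm 1}$). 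The free parameters are the intermediate second-coordinates $p_1,\dots,p_{m+1}$, subject to positivity $p_0<p_1<\cdots<p_{m+1}$ for all transitions except the last, and $p_{m+1}>p_{m+2}$ for the leftmost letter $b^{-1}$ of $w$ (which acts as $r^{-1}$).

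The main obstacle is combining all these trajectories into coherent partial isomorphisms. By Proposition~\ref{pp1}, $r$ and $s$ decompose into level-coefficients $r_l,s_l\in\Ism_+(\Q)$, and each $r_l$ aggregates its domain from three sources: (i) $A^l$ (from the initial $r$), (ii) $B^l$ (from the final $r^{-1}$), and (iii) intermediate trajectory points where $r$ is applied at level $l$. Order-preservation of $r_l$ on the combined finite domain is the delicate requirement; in particular, on $A^l\cup B^l$ --- which can be interleaved inside level $l$, since the hypothesis only guarantees $A^l<B^{\tau(l)}$ and not $A^l<B^l$ --- the images must follow the ambient order of $A^l\cup B^l$.

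To resolve this, I would fix a threshold $L>\max(M,M',\sup\pi_2(A\cup B))$ and place every intermediate $p_i$ strictly above $L$. Within $\Q\cap(L,\infty)$, density allows us to realize any finite order-preserving assignment. I would process the trajectories point by point, at each step invoking Lemma~\ref{fil4} to extend a running partial isomorphism by a single point, picking the fresh rational image in an open interval above $L$ that respects all order constraints already imposed by previous extensions. Once all trajectories have been handled, (a) holds by design of the trajectories; (b) from the choice of $w$; (c) because $r(A)$ and $r(B)$ consist of points above $L>M$; and (d) because every intermediate domain/range point of $s$ and $r$ sits above $L>\max(M,M')$, while the only points of $\dom r\cup\rng r$ lying below $L$ are in $A\cup B\subset(M',\infty)$.
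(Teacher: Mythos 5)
The proposal has a genuine gap at the step where you brush aside the order-preservation constraints with ``density allows us to realize any finite order-preserving assignment.'' In fact the constraints can be globally inconsistent, and the threshold idea does not fix this.

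Concretely: with $w=b^{-1}w'b$ and $w'$ of length $m$ with only positive exponents, each trajectory is monotone, $p_0<p_1<\cdots<p_{m+1}$, and the last step forces $r(\phi(a))=(k_{m+1},p_{m+1})$. Now suppose that for some $a=(k_0,p_0)\in A$ there is an index $1\le i\le m$ with $k_i=\tau(k_0)$ such that the $(i+1)$-th letter of $w$ (read from the right) is $b$. Then both $\phi(a)$ and $(k_i,p_i)$ lie in $\dom r$ at level $\tau(k_0)$, and both are mapped by $r$ to level $\xi(\tau(k_0))$. Since $\pi_2(\phi(a))<L<p_i$, order-preservation of $r$ requires $p_{m+1}<p_{i+1}$; but $p_{i+1}\le p_{m+1}$ by monotonicity of the trajectory. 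This contradiction is not an artifact of a poor ordering of the greedy steps: it is a conflict between two constraints that must both be satisfied. Moreover, the obstruction is unavoidable for some data. Take $n=3$, $\eta=(1\,2)$, $\xi=(1\,2\,3)$, $\tau=(1\,2)$, so $\xi\tau\xi^{-1}=(2\,3)$. For $k_0=1$ the endpoint level is $k_{m+1}=\xi\tau(1)=3$; in the action graph of $\eta,\xi$ on $\{1,2,3\}$ with only forward edges, level $3$ is reachable from outside $\{3\}$ only via the edge $2\xrightarrow{\,b\,}3$, and $\tau(1)=2$. So every positive $w'$ with $w'(\eta,\xi)=\xi\tau\xi^{-1}$ forces a middle position at level $\tau(1)$ followed by $b$, i.e.\ exactly the forbidden configuration.

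The paper circumvents this by a genuinely different construction. It does not use $w=b^{-1}w'b$ with a positive $w'$; instead it takes $w=b^{-t_0-1}a^{i_0}\hat w\,b$ with $\hat w(\eta,\xi)=\tau\xi^{-1}$, first moves $A$ and $B$ by $r^0$ to sets $\tilde A,\tilde B$ separated by a cut point $d$, and then builds $r=r^0\cup r'\cup r''$ from three canonical pieces supported on carefully interleaved families $A_1,\dots,A_{m+1}$ and $B_1,\dots,B_{t_0+1}$ (with $A^k_{l_i}<B^k_{i+1}<A^k_{l_i+1}$), explicitly checking that $r$ is order-preserving across pieces. The extra $b^{-t_0}$ block and the interleaving of $A_l$ and $B_i$ are precisely what resolve the domain clash you identified at the end of your sketch; without them, the clash is real.
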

\begin{proof}
At first, choose $c$ and $d$ so that $\max\{M,\max\{\bigcup_{k=1}^n B^k\cup A^k\}\}<c<d$, and take $\tilde{A},\tilde{B}\subseteq\B_n$ such that $c<\tilde{A}^k<d<\tilde{B}^{k}$, and $|A^k|=|\tilde{A}^{\xi(k)}|$ and $|B^k|=|\tilde{B}^{\xi(k)}|$ for every $k=1,...,n$. Then initially define {\color{black}$r^0\in\Ism_+(B_n)$} so that $r^0(A)=\tilde{A}$ and $r^0(B)=\tilde{B}$ and $\tau_{r^0}=\xi$.

Now let $\hat{w}(a,b)$ be a word with positive terms so that $\hat{w}(\eta,\xi)={\color{black}\tau}\xi^{-1}$ (such a word exists because each element of $S_n$ has finite rank). Assume that $\hat{w}$ is of the form (\ref{word}) and $m=|\hat{w}|$. Now take $c_1,...,c_m\in\R$ with $\tilde{B}^k<c_1<c_2<...<c_m$ for every $k=1,...,n$, and choose a family $A_1,...,A_{m+1}$ adjusted to $\hat{w}$, $\eta$ and $\xi$ such that for every $k=1,...,n$
\begin{itemize}
\item $A^k_1=\tilde{A}^k$,
\item $c_i<A^k_{i+1}<c_{i+1}$ for $i=1,...,m-1$,
\item $c_m<A^k_{m+1}$,
\end{itemize}
and let $r',s'$ be the canonical positive partial isomorphism adjusted to this family. 
Next, let {\color{black}$l_1<...<l_t$} be an increasing enumeration of $\{l\leq m+1:A_l\subseteq\on{dom}r'\}$. Choose {\color{black}$t_0> t+1$} such that $\xi^{{\color{black}t_0}+1}=\on{id}$. 

Assume that {\color{black}$n_1>0$ (we deal with the case $n_1=0$ analogously: in the following we should require $A^k_{l_i}<B_i<A^k_{l_i+1}$)} and choose a family $B_1,...,B_{{\color{black}t_0}+1}$ adjusted to the word $u(a,b)=b^{{\color{black}t_0}}$ and $\xi$, such that additionally for every $k=1,...,n$
\begin{itemize}
\item $B^k_1=\tilde{B}^k$;
\item $A^k_{l_i}<B^k_{i+1}<A^k_{l_i+1}$ for every $i=1,...,{\color{black}t}$;
\item $A^k_{m+1}<B^k_{l_j+2}$.
\end{itemize}
Then let $r''$ be a positive partial isomorphism adjusted to this family.
Finally, choose $i_0\in\N$ so that $\eta^{i_0}=\on{id}$ and a family $D_1,...,D_{i_0+1}$ adjusted to $u'(a,b)=a^{i_0}$ and $\eta$ such that additionally for every $k=1,...,n$, $D^k_1=A^k_{m+1}$ and $D^k_{i_0+1}=B^k_{{\color{black}t_0}+1}$. Then let $s''$ be a positive partial isomorphism adjusted to this family.

Define $s:=s'\cup s''$ and $r:=r^0\cup r'\cup r''$. We show that $r$ is a partial isomorphism. Let $x,y\in\on{dom}r$ be such that {\color{black}$\pi_2(x)<\pi_2(y)$}. If $x,y\in\on{dom}r_0$ or $x,y\in\on{dom}r'$ or $x,y\in\on{dom}r''$, then we are done. If $x\in\on{dom}r_0$ and $y\in\on{dom}(r'\cup r'')$, then {\color{black}$\pi_2(r_0(x))<c_1<\pi_2(r'\cup r''(y))$}. If $x\in\on{dom}r'$ and $y\in\on{dom}r''$, then $x\in A_{l_i}$ and {\color{black}$y\in B_a$ for some $a\geq i+1$}; thus $r'(x)\in A_{l_i+1}<B_{{\color{black}a}+1}\ni r''(y)$. If $x\in\on{dom}r''$ and $y\in\on{dom}r'$, then $x\in B_{i}$ and {\color{black}$y\in A_{l_a}$ for some $a\geq i$}; thus $r''(x)\in B_{i+1}<A_{l_i+1}\leq A_{l_{\color{black}a}+1}\ni r'(y)$. It is routine to check that $s$ is a positive partial isomorphisms.

Define $w(a,b):=b^{-{\color{black}t_0}-1}a^{i_0}\hat{w}(a,b)b$. Note that $r(A)=r_0(A)=\tilde{A}$, $\hat{w}(s,r)(\tilde{A})=\hat{w}(s,r)(A_1)=A_{m+1}$, $s^{i_0}(A_{m+1})=B_{{\color{black}t_0}+1}$, $r^{-{\color{black}t_0}}(B_{{\color{black}t_0}+1})=B_1=\tilde{B}$ and $r^{-1}(\tilde{B})=B$. Thus $w(s,r)(A)=B$ and we get (a). See that {\color{black}$\tau_s=\eta,\tau_r=\xi$, and}
$
{\color{black}w(\eta,\xi)=\xi^{-t_0-1}\eta^{i_0}\hat{w}(\eta,\xi)\xi=\tau},
$
{\color{black}hence} we obtain (b). Conditions (c) and (d) easily follow from the construction.
\end{proof}

\begin{lemma}\label{LemmaConditionForBeingExtension}
Let $f,h\in\Ism(B_n)$ be such that $\on{dom}h\subset\on{dom}f$, {\color{black} $\pi_2(\on{dom}h)=\{1,...,n\}$}, $f(\on{dom}h)=\on{rng}h$ and $\tau_f=\tau_h$. Then $h\subseteq f$.
\end{lemma}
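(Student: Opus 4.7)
The plan is to reduce the statement to a fiberwise uniqueness argument via the structural description of partial isomorphisms of $\B_n$ from Proposition \ref{pp1}(ii). First I would write $f(k,p)=(\tau_f(k),f_k(p))$ and $h(k,p)=(\tau_h(k),h_k(p))$ on their respective domains, with $f_1,\dots,f_n,h_1,\dots,h_n\in\Ism(\Q)$. The assumption $\tau_f=\tau_h$ handles the first coordinate at once, so for any $(k,p)\in\on{dom} h$ it remains only to verify that $f_k(p)=h_k(p)$.

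Next I would analyze the two maps column-by-column on $\on{dom} h$. Because of the decomposition of $h$, for each $k$ the map $h$ restricts to an order-preserving bijection of $\{k\}\times(\on{dom} h)^k$ onto $\{\tau_h(k)\}\times(\on{rng} h)^{\tau_h(k)}$, which in particular forces $|(\on{dom} h)^k|=|(\on{rng} h)^{\tau_f(k)}|$. On the other hand, combining $f(\on{dom} h)=\on{rng} h$ with $\tau_f=\tau_h$ shows that $f$ sends $\{k\}\times(\on{dom} h)^k$ injectively into $\{\tau_f(k)\}\times(\on{rng} h)^{\tau_f(k)}$, so by the cardinality equality just noted this injection is actually a bijection. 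Hence $f_k$ and $h_k$ restricted to $(\on{dom} h)^k$ are two order-preserving bijections onto the common target $(\on{rng} h)^{\tau_f(k)}$. Since there is a unique order isomorphism between two finite linearly ordered sets of equal cardinality, the two restrictions coincide, giving $f(k,p)=h(k,p)$ for every $(k,p)\in\on{dom} h$, i.e.\ $h\subseteq f$.

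I do not foresee any essential difficulty; the argument is essentially bookkeeping. The only mild subtlety is upgrading the obvious inclusion $f(\{k\}\times(\on{dom} h)^k)\subseteq\{\tau_f(k)\}\times(\on{rng} h)^{\tau_f(k)}$ to an equality, and this is precisely the role played by the hypotheses $f(\on{dom} h)=\on{rng} h$, $\tau_f=\tau_h$, and the fact that every column of $\on{dom} h$ is nonempty.
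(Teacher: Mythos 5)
Your proof is correct and takes essentially the same approach as the paper's: both fix a column $k$, observe that $f(\dom h)=\rng h$ together with $\tau_f=\tau_h$ forces $f$ to map $\{k\}\times(\dom h)^k$ into $\{\tau_h(k)\}\times(\rng h)^{\tau_h(k)}$, and then conclude from injectivity and order-preservation that $f$ must agree with $h$ there. The only difference is cosmetic: you route the argument through the explicit coordinate decomposition of Proposition \ref{pp1}(ii) and the uniqueness of order isomorphisms between finite chains, while the paper carries out the same cardinality-and-monotonicity count directly on the tuples $q_1<\dots<q_l$ and $p_1<\dots<p_l$.
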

\begin{proof}
Fix $k\leq n$ and {\color{black}let} $(\on{dom}h)^k=\{q_1<\dots<q_l\}$. {\color{black}Fix} $p_i=\pi_2(h(k,q_i))$. Then $(\on{rng}h)^{\tau_h(k)}=\{p_1<\dots<p_l\}$. Note that $f(k,q_i)\in\{\tau_h(k)\}\times\{p_1,\dots,p_l\}$. Since $f$ is order-preserving, then $f(k,q_i)=(\tau_h(k),p_i)$. Hence $h\subseteq f$.
\end{proof}

\begin{lemma}(Key Lemma)\label{KeyLemmaBn}
Let {\color{black}$f\in\Ism_+(\B_n)$,} $g,h\in\Ism(\B_n)$ {\color{black}and} $w(a,b)$ be a word. {\color{black}If $\tau_f,\tau_g$ are generators of $S_n$, }then there are {\color{black}$\tilde{f}\in\Ism_+(\B_n)$ and $\tilde{g}\in\Ism(\B_n)$ such that}
\begin{enumerate}
\item[(i)] $f\subset\tilde{f}$ and $g\subset\tilde{g}$ and  $\tau_{\tilde{f}}=\tau_f$, $\tau_{\tilde{g}}=\tau_g$;
\item[(ii)] $h\subset\bar{w}(\tilde{f},\tilde{g})$ for some word $\bar{w}$;
\item[(iii)] $w(\tilde{f},\tilde{g})(y)\neq y$ for some $y\in\B_n$.
\end{enumerate}
\end{lemma}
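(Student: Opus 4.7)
The plan is to use a push-then-act strategy: extend $f$ positively so that $A := \dom h$ and $B := \rng h$ get pushed, under different powers of $f$, to high and section-wise ordered images, then invoke Lemma \ref{KillingIsom} to realize the word action between these pushed images, and finally combine the pieces with further extensions that handle condition (iii).

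First, set $\sigma := \tau_f$ and $\rho := \tau_g$. I will extend $f$ to some positive $f_2 \in \Ism_+(\B_n)$ with $\tau_{f_2} = \sigma$, and pick natural numbers $N_1 < N_2$ so that $A \cup B \subset \dom f_2^{N_1}$, $B \subset \dom f_2^{N_2}$, and, writing $A' := f_2^{N_1}(A)$, $B' := f_2^{N_2}(B)$, $\tau := \sigma^{N_2}\tau_h\sigma^{-N_1}$, and $M' := \max\{\pi_2(x) : x \in \dom f \cup \rng f \cup \dom g \cup \rng g\} + 1$, one has $M' < (A')^{k} < (B')^{\tau(k)}$ for each $k$. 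This is carried out by iterated application of Lemma \ref{f25} and Corollary \ref{f25'}, using the freedom at each step to land each pushed point at a prescribed height, placing $A$'s image in a moderate band just above $M'$ and $B$'s image in a strictly higher band. Note that $A' \cap B' = \emptyset$ is automatic since $N_2 > N_1$ and $f_2$ is positive; also $|(A')^k| = |(B')^{\tau(k)}|$ because $h$ is an isomorphism and pushing preserves section cardinalities.

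Next, apply Lemma \ref{KillingIsom} with $A', B', \tau$, generators $\eta := \sigma$, $\xi := \rho$, lower bound $M'$, and upper threshold $M \geq \max\{\pi_2(x) : x \in \dom f_2 \cup \rng f_2\} + 1$. This produces $s, r \in \Ism_+(\B_n)$ with $\tau_s = \sigma$, $\tau_r = \rho$, a word $u(a,b)$ with $u(\sigma,\rho) = \tau$, and $u(s,r)(A') = B'$, satisfying $\dom s \cup \rng s > M$ and $\dom r \cup \rng r > M'$. Put $\tilde{f}^* := f_2 \cup s$ and $\tilde{g}^* := g \cup r$. By the height separation between the constituent pieces (together with matching $\tau$'s), both unions are partial isomorphisms with $\tilde{f}^*$ positive. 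Define $\bar{w}(a,b) := a^{-N_2} u(a,b) a^{N_1}$. For $x \in A$, $(\tilde{f}^*)^{N_1}(x) = f_2^{N_1}(x) \in A'$; since the $u(s,r)$-trajectory on $A'$ stays entirely above $M'$ (and above $M$ in its middle portion), where $\tilde{f}^*, \tilde{g}^*$ coincide with $s, r$, one has $u(\tilde{f}^*, \tilde{g}^*)(f_2^{N_1}(x)) = u(s,r)(f_2^{N_1}(x))$. Both $u(s,r)$ restricted to $(A')^{k^*}$ and the conjugation $f_2^{N_2} \circ h \circ f_2^{-N_1}$ restricted to $(A')^{k^*}$ are the unique order-preserving bijection onto $(B')^{\tau(k^*)}$; hence they coincide, giving $u(\tilde{f}^*, \tilde{g}^*)(f_2^{N_1}(x)) = f_2^{N_2}(h(x))$, and thus $\bar{w}(\tilde{f}^*,\tilde{g}^*)(x) = h(x)$, so $h \subset \bar{w}(\tilde{f}^*, \tilde{g}^*)$.

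For (iii), apply Lemma \ref{KillingWordsInBn} to $w$ with bound $M'' > \max\{\pi_2(x) : x \in \dom \tilde{f}^* \cup \rng \tilde{f}^* \cup \dom \tilde{g}^* \cup \rng \tilde{g}^*\}$, obtaining $s', r' \in \Ism_+(\B_n)$ with $\tau_{s'} = \sigma$, $\tau_{r'} = \rho$, $\dom s' \cup \dom r' > M''$, and $w(s',r')(y) \neq y$ for some $y$ whose entire $w$-trajectory lies above $M''$. Set $\tilde{f} := \tilde{f}^* \cup s'$ and $\tilde{g} := \tilde{g}^* \cup r'$. These are partial isomorphisms by the same height-separation argument (with $\tilde{f}$ positive), and since the $\bar{w}$-trajectory on $A$ lies below $M''$, condition (ii) persists, while the $w$-trajectory on $y$ lies above $M''$, yielding $w(\tilde{f}, \tilde{g})(y) = w(s',r')(y) \neq y$. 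Condition (i) is immediate because every constituent piece of $\tilde{f}$ uses permutation $\sigma$ and every piece of $\tilde{g}$ uses permutation $\rho$.

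The main obstacle is Step 1: extending $f$ positively so that $(A')^k < (B')^{\tau(k)}$ holds section-wise while $f_2$ remains a positive partial isomorphism on the already-chosen domain. The crucial trick is to use two distinct push-powers $N_1 < N_2$: this both guarantees $A' \cap B' = \emptyset$ even when $A \cap B \neq \emptyset$ (which can genuinely happen since $\dom h$ and $\rng h$ need not be disjoint) and makes it possible to place $B$'s image arbitrarily higher than $A$'s by simply iterating $f$ further on $B$.
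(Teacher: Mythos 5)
Your proof is correct and takes essentially the same approach as the paper: push $\dom h$ and $\img h$ above the existing data via powers of a positive extension of $f$, apply Lemma \ref{KillingIsom} in that elevated region to realize the conjugated $h$ as a word, and append the Lemma \ref{KillingWordsInBn} pieces at a still higher level for condition (iii). The only cosmetic difference is that the paper picks push-exponents $m,m'$ with $\eta^{m}=\eta^{m'}=\on{id}$ (using the extra parameter $t$ in Corollary \ref{f25'}) so that it can feed $\tau=\tau_h$ directly to Lemma \ref{KillingIsom}, whereas you allow arbitrary $N_1<N_2$ and absorb the conjugation into the target permutation $\tau:=\sigma^{N_2}\tau_h\sigma^{-N_1}$ --- both are valid.
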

{\color{black}
\begin{remark}\label{filrem1}\emph{
From Remark \ref{fr1} we know that, in general, permutation $\tau_f$ may be not uniquely determined. In such cases, the sentence "If $\tau_f,\tau_g$ are generators of $S_n$" in the assumptions of the above result should be understood as follows: "If there are extensions ${f'},{g'}\in\Aut(\B_n)$ of $f$ and $g$, respectively, such that $\tau_f=\tau_{{f'}}$ and $\tau_g=\tau_{{g'}}$". The same issue appears in the end of this section.}
\end{remark}}
\begin{proof}By Lemma \ref{fil4}
we may assume that $\pi_1(\dom f)=\pi_1(\dom g)=\{1,2,\dots,n\}$. Let $\eta:=\tau_f$ and $\xi:=\tau_g$. Let $M_0\in\Q$ be such that  
$$
M_0>\on{dom} f\cup\on{rng} f\cup\on{dom}g\cup\on{rng}g\cup\dom h\cup\img h.
$$ 
By Corollary \ref{f25'}, there exists {\color{black}$f_0\in\Ism_+(\B_n)$} and {\color{black}$m\in\N$} such that {\color{black}$\eta^{m}=\on{id}$, $f\subset f_0$ and $\pi_2({\color{black}f_0^m}(\dom h))>M_0$}. 
Now let
$$
M_1>\on{dom}f_0\cup\on{rng}f_0.
$$
Again by Corollary \ref{f25'}, there is {\color{black} $f_1\in\Ism_+(\B_n)$} and $m'\in\N$ such that $\eta^{m'}=\on{id}$, $f_0\subset f_1$ and $\pi_2(f_1^{m'}(\img h))>M_1$. By apply{\color{black}ing} Lemma \ref{KillingIsom} for $A:=f_1^{m}(\dom h)$ and $B:=f_1^{m'}(\img h)$, $M_2>\on{dom}{\color{black}f_1\cup\on{rng}f_1}$, {\color{black}$M'=M_0$} and $\tau=\tau_{h}$, we get  {appropriate \color{black}$r,s\in\Ism_+(\B_n)$} and a word $w'$. By the choice of $A$ and $B$, it is clear that $f_2:=f_1\cup s$ and $g_2:= g\cup r$ {\color{black}belong to $\Ism_+(\B_n)$, and}
$$
f_2^{-m'}\circ w'(f_2,g_2)\circ f_2^{m}(\dom h)=\img h.
$$
{\color{black}Also,}
$$
\tau_{(f_2^{-m'}\circ w'(f_2,g_2)\circ f_2^{m})}=\tau_{f_2^{-m'}}\tau_{w'(f_2,g_2)}\tau_{f_2^{m}}=\eta^{m'}\tau_{w_2(f_2,g_2)}\eta^m=w(\eta,\xi)=\tau_{h}.
$$
Hence by Lemma \ref{LemmaConditionForBeingExtension} we {\color{black}obtain}
$$
h\subset f_2^{-m'}\circ w'(f_2,g_2)\circ f_2^{m}.
$$
Let {\color{black}$s',r'\in\Ism_+(\B_n)$} be as in Lemma \ref{KillingWordsInBn}, chosen for $\eta,\xi$, $w$ and $M_3>\on{dom}f_2\cup\on{dom}g_2\cup\on{rng}f_2\cup\on{rng}g_2$. Then $f_3:=f_2\cup s'{\color{black}\in\Ism_+(\B_n)}$ and {\color{black}$g_3:=g_2\cup r'\in\Ism(\B_n)$. Since $g_3$ is not assumed to be positive} and for some $y\in \B_n$, $w(f_3,g_3)(y)\neq y$, {\color{black} then $f_3$ and $g_3$ satisfy the assertion.}
\end{proof}

{\color{black}Now put $$\Aut_+(\B_n):=\{f\in\Aut(\B_n):\forall_{(k,p)\in\B_n}\;\pi_2(f(k,p))>p\}$$
 and observe that for $f\in\Aut(\B_n)$,}
\[
f\in\Aut_+(\B_n)\iff{\color{black}\forall p\in\Q\forall k\leq n\exists q>p\exists l\leq n} f(k,p)=(l,q).
\]
Thus $\Aut_+(\B_n)={\color{black}\bigcap_{p\in\Q}\bigcap_{k\leq n}\bigcup_{q>p}\bigcup_{l\leq n}}\{f\in\Aut(\B_n):f(k,p)=(l,q)\}$. Since $\{f\in\Aut(\B_n):f(k,p)=(l,q)\}$ is clopen {\color{black}(in fact, it is the set of all extensions of partial isomorphism $(k,p)\to (l,q)$), we have that} $\Aut_+(\B_n)$ is $G_\delta$ subset of $\Aut(\B_n)$.\\

{\color{black}It is also easy to see that the set $$\mathcal{G}:=\{(f,g)\in\Aut(\B_n)\times\Aut(\B_n):\tau_f,\tau_g\;\mbox{are generators of }S_n\}$$ is open. Indeed, if $(f,g)\in \mathcal{G}$, then setting $f':=f\vert_{\{1,...,n\}\times \{0\}}$ and $g':=g\vert_{\{1,...,n\}\times \{0\}}$, we have that $f',g'\in \Ism(\B_n)$ and $$(f,g)\in\{(\tilde{f},\tilde{g})\in\Aut(\B_n)\times \Aut(\B_n):f'\subseteq \tilde{f},\;g'\subseteq\tilde{g}\}\subseteq \mathcal{G}.$$
Hence the family
$$
\mX:=\{(f,g)\in\Aut_+(\B_n)\times\Aut(\B_n):\tau_f,\tau_g\;\mbox{are generators of }S_n\}=\mathcal{G}\cap(\Aut_+(\B_n)\times\Aut(\B_n))
$$
is $G_\delta$ in $\Aut(\B_n)\times\Aut(\B_n)$.\\
Now let us note that for any $f,g\in\Ism(\B_n)$,
$$
(f,g)\in \mX^{<\omega}\iff f\in\Ism_+(\B_n)\;\mbox{and}\;\tau_f,\tau_g\;\mbox{are generators of }S_n.
$$
The implication $\Rightarrow$ is clear. Assume that $f\in\Ism_+(\B_n)$, and $\tau_f$ and $\tau_g$ are generators of $S_n$. Then using Lemma \ref{fil4} and the back-and-forth argument, we can inductively define $\tilde{f}\in\Aut_+(\B_n)$ such that $f\subset\tilde{f}$ and $\tau_{\tilde{f}}=\tau_f$, and also take any $\tilde{g}\in\Aut(\B_n)$ with $g\subset\tilde{g}$ and $\tau_{\tilde{g}}=\tau_g$. Then $(\tilde{f},\tilde{g})\in\mX$.\\
Hence, using} Theorem \ref{FirstGeneralTheorem} and Lemma \ref{KeyLemmaBn} we obtain the following. 
\begin{corollary}
The set {\color{black}
\[
\{(f,g)\in\mX:f\text{ and }g\text{ freely generate a dense subgroup of }\Aut(A)\}
\]
is comeager in $\mX$. }
\end{corollary}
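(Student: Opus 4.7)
The plan is to deduce the corollary from Theorem \ref{FirstGeneralTheorem} by verifying its extension hypothesis for the family $\mX$. The characterisation just established, namely that $(f,g) \in \mX^{<\omega}$ iff $f \in \Ism_+(\B_n)$ and $\tau_f, \tau_g$ generate $S_n$, together with the fact that $\mX$ is $G_\delta$ in $\Aut(\B_n) \times \Aut(\B_n)$, makes $\mX$ a Polish subspace to which Theorem \ref{FirstGeneralTheorem} applies once the extension property is checked.

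First I would fix arbitrary $(f_0,g_0) \in \mX^{<\omega}$, $h \in \Ism(\B_n)$, and an irreducible word $w(a,b)$. By the characterisation, $f_0 \in \Ism_+(\B_n)$ and $\tau_{f_0}, \tau_{g_0}$ generate $S_n$, which are precisely the hypotheses of Lemma \ref{KeyLemmaBn}. Applying that lemma to $f_0, g_0, h, w$ yields $\tilde f \in \Ism_+(\B_n)$ and $\tilde g \in \Ism(\B_n)$ extending $f_0$ and $g_0$, with $\tau_{\tilde f} = \tau_{f_0}$ and $\tau_{\tilde g} = \tau_{g_0}$, some word $\bar w$ such that $h \subset \bar w(\tilde f, \tilde g)$, and a point $y$ with $w(\tilde f, \tilde g)(y) \neq y$. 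Setting $(f_1, g_1) := (\tilde f, \tilde g)$, the characterisation immediately returns $(f_1, g_1) \in \mX^{<\omega}$, so clauses (i)--(iii) of Theorem \ref{FirstGeneralTheorem} hold and the theorem delivers the claimed comeagerness.

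No genuine obstacle remains at this stage, since the combinatorial content has been absorbed into Lemma \ref{KeyLemmaBn}. The one delicate point is the permutation-preservation clause $\tau_{\tilde f} = \tau_{f_0}$, $\tau_{\tilde g} = \tau_{g_0}$: without it the extended pair could fall out of $\mX^{<\omega}$. Remark \ref{filrem1} clarifies how this clause is to be read when the projections of the domains of $f_0, g_0$ are proper subsets of $\{1,\dots,n\}$, and that convention is exactly what the Key Lemma supplies, so no further argument is needed.
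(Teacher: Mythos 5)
Your proposal is correct and follows the same route the paper intends: the paper simply states that the corollary follows from Theorem \ref{FirstGeneralTheorem} and Lemma \ref{KeyLemmaBn}, and your write-up supplies precisely the bookkeeping — the characterisation of $\mX^{<\omega}$, the permutation-preservation clause from the Key Lemma, and Remark \ref{filrem1} — needed to verify the extension hypothesis of Theorem \ref{FirstGeneralTheorem}.
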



\section{$B_\omega$ -- the infinite antichain of chains}\label{SectionBOmega}
{\color{black}In this section we deal with $\B_\omega$. Recall that {\color{black}$\B_\omega=(\N\times\Q,\leq)$, where $\leq$ is defined by}
$$
(k,p)\leq (l,q)\iff k=l\text{ and }p\leq q.
$$
{\color{black}Again, we will identify $\B_\omega$ with $\N\times\Q$.}\\
Symbols $\pi_1(\cdot),\pi_2(\cdot),A^k$ and so on have analogous meaning as in the previous section. The following result is a counterpart of Proposition \ref{pp1}. We skip the proof since it is essentially the same. $S_\omega$ denotes the family of all permutations of $\N$.
\begin{proposition}\label{ppp1}~ {\color{black}
\begin{enumerate}
\item Let $f:\B_\omega\to\B_\omega$. Then $f\in\Aut(\B_\omega)$ iff there exist $f_1,f_2,\dots\in\Aut(\Q)$ and $\tau_f\in S_\omega$ such that $f(k,p)=(\tau_f(k),f_k(p))$ for every $(k,p)\in\B_\omega$.
\item Let $f:X\to Y$ for some finite sets $X,Y\subset\B_\omega$, and let $N_f:=\{k\in\N:X^k\neq\emptyset\}$. Then $f\in\Ism(\B_n)$ iff there exist $f_k\in\Ism(\Q)$, $k\in N_f$, and one to one map $\tau_f:N_f\to\N$ such that $f(k,p)=(\tau_f(k),f_k(p))$ for every $(k,p)\in X $. Moreover, $f\in\Ism_+(\B_n)$ iff each $f_k\in\Ism_+(\Q)$. 
\end{enumerate}}
\end{proposition}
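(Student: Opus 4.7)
\textbf{Proof plan for Proposition \ref{ppp1}.} The plan is to mimic the proof of Proposition \ref{pp1} virtually verbatim, with the only adjustment being that the symmetric group $S_\omega$ replaces $S_n$ and that for a partial isomorphism $f$ with finite domain $X\subset\B_\omega$, only finitely many sections $X^k$ (those indexed by $k\in N_f$) contribute.

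For part (i), starting from $f\in\Aut(\B_\omega)$, I would reproduce the chain of equivalences
\[
k=l\text{ and }p\leq q\iff f(k,p)\leq f(l,q)\iff \pi_1(f(k,p))=\pi_1(f(l,q))\text{ and }\pi_2(f(k,p))\leq\pi_2(f(l,q)),
\]
which show that $\pi_1(f(k,p))$ depends only on $k$. This lets me define $\tau_f(k):=\pi_1(f(k,p))$ for arbitrary $p\in\Q$. Injectivity of $\tau_f$ follows from the leftmost equivalence, and surjectivity of $\tau_f$ onto $\N$ follows because $f$ is surjective on $\B_\omega$: for any $l\in\N$ some $(m,q)\in\B_\omega$ satisfies $f(m,q)=(l,0)$, so $\tau_f(m)=l$. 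Hence $\tau_f\in S_\omega$. Putting $f_k(p):=\pi_2(f(k,p))$, order-preservation is read off from the equivalence, and surjectivity of each $f_k$ onto $\Q$ is obtained exactly as in the proof of Proposition \ref{pp1} (given $p'\in\Q$, the preimage $f^{-1}(\tau_f(k),p')$ has the form $(k,p)$, hence $f_k(p)=p'$). The converse implication is a routine verification.

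For part (ii), I would invoke ultrahomogeneity of $\B_\omega$: any $f\in\Ism(\B_\omega)$ extends to some $\bar{f}\in\Aut(\B_\omega)$. Applying (i) to $\bar{f}$ yields $\tau_{\bar{f}}\in S_\omega$ and $\bar{f}_k\in\Aut(\Q)$ with $\bar{f}(k,p)=(\tau_{\bar{f}}(k),\bar{f}_k(p))$. Setting $\tau_f:=\tau_{\bar{f}}|_{N_f}$, which is an injection of $N_f$ into $\N$, and $f_k:=\bar{f}_k|_{X^k}\in\Ism(\Q)$ for $k\in N_f$, produces the required factorization. The reverse implication is direct. The final clause about positivity is immediate from the definitions, since $f$ is positive iff $\pi_2(f(k,p))=f_k(p)>p$ for every $(k,p)\in X$, i.e.\ iff each $f_k\in\Ism_+(\Q)$.

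There is essentially no obstacle; the statement is a direct translation of Proposition \ref{pp1} to the infinite setting and the argument is mechanical. The one point worth highlighting is that surjectivity of $\tau_f$ onto $\N$ in (i) genuinely uses the surjectivity of $f$ on $\B_\omega$ (an injection $\N\to\N$ need not be onto in general, and this step fails without the ambient assumption that $f$ is an automorphism rather than merely an order-embedding).
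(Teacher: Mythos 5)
Your proposal is correct and is precisely what the paper intends: the paper explicitly omits the proof of Proposition~\ref{ppp1}, stating it is ``essentially the same'' as that of Proposition~\ref{pp1}, and you carry out that adaptation faithfully. Your closing remark is well-taken: the only non-automatic point in passing from $S_n$ to $S_\omega$ is that surjectivity of $\tau_f$ must be deduced from surjectivity of $f$ (rather than from injectivity plus finiteness), and you handle this correctly.
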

}
 By $\mathcal{X}\subseteq\Aut(\B_\omega)$ we denote the set of all $f\in\Aut(\B_\omega)$ such that the corresponding permutation $\tau_f\in S_\infty$ does not have finite cycles, i.e. the set $\{\tau_f^k(n):k\in\omega\}$ is infinite for each $n\in\omega$.  By $\mathcal{X}_0\subseteq\Ism(\B_\omega)$ we denote the set of all $f\in\Ism(\B_\omega)$ such that the corresponding partial permutation $\tau_f$ does not have finite cycles, i.e. for every $n\in\dom\tau_f$ there is $k\in\omega$ such that $\tau_f^k(n)\notin\dom\tau_f$. {\color{black}It turns out that
\begin{lemma}\label{lem:aaa1}
$\mathcal{X}_0=\mathcal{X}^{<\omega}$
\end{lemma}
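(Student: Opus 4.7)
The plan is to prove the two inclusions of Lemma~\ref{lem:aaa1} separately.

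For the easy direction $\mathcal{X}^{<\omega}\subseteq\mathcal{X}_0$, suppose $f\subset\bar f$ with $\bar f\in\mathcal{X}$. By Proposition~\ref{ppp1}, the partial permutation $\tau_f$ is uniquely determined on $N_f=\pi_1(\dom f)$ and coincides there with the restriction of $\tau_{\bar f}$. If $\tau_f$ had a finite cycle in the partial sense, i.e., if some $n\in N_f$ satisfied $\tau_f^k(n)\in N_f$ for every $k\in\omega$, then the entire $\tau_{\bar f}$-orbit of $n$ would sit inside the finite set $N_f$; together with the injectivity of $\tau_{\bar f}$ and the pigeonhole principle this would force the orbit of $n$ to be a finite cycle of $\tau_{\bar f}$, contradicting $\bar f\in\mathcal{X}$.

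For the other inclusion $\mathcal{X}_0\subseteq\mathcal{X}^{<\omega}$, given $f\in\mathcal{X}_0$ I would construct $\bar f\in\mathcal{X}$ extending $f$. The main task is to produce a permutation $\tau\in S_\omega$ extending $\tau_f$ whose every orbit is infinite. By the no-finite-cycles hypothesis on $\tau_f$, the partial map $\tau_f$ on the finite set $S:=N_f\cup\tau_f(N_f)$ decomposes $S$ into finitely many pairwise disjoint finite chains---each chain is obtained as a maximal trajectory of $\tau_f$ traced in both directions, and the absence of partial cycles ensures these trajectories are chains rather than loops. I then attach to each such chain infinitely many fresh naturals from $\N\setminus S$ on both sides, turning it into a bi-infinite $\Z$-orbit. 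Since $\N\setminus S$ is infinite, this can be arranged to leave an infinite reservoir of still-unused naturals, which I finally partition into additional $\Z$-orbits so that every element of $\N$ is covered exactly once. The resulting $\tau\in S_\omega$ extends $\tau_f$ and has only infinite orbits, hence no finite cycles.

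Once $\tau$ is in hand, the remainder is routine. For every $k\in N_f$ the coordinate function $f_k\in\Ism(\Q)$ provided by Proposition~\ref{ppp1} extends to some $\bar f_k\in\Aut(\Q)$ by the ultrahomogeneity of $(\Q,\leq)$, while for $k\in\N\setminus N_f$ I choose $\bar f_k\in\Aut(\Q)$ arbitrarily. Again by Proposition~\ref{ppp1}, the formula $\bar f(k,p):=(\tau(k),\bar f_k(p))$ defines an element of $\Aut(\B_\omega)$, which clearly extends $f$ and satisfies $\tau_{\bar f}=\tau$; consequently $\bar f\in\mathcal{X}$ and $f\in\mathcal{X}^{<\omega}$. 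The only nontrivial point in the whole argument is the extension of $\tau_f$, and even there the only subtlety is the bookkeeping needed to simultaneously extend each finite chain to a two-sided infinite orbit and to exhaust the remaining naturals via further infinite orbits---which succeeds precisely because $\N\setminus S$ is countably infinite.
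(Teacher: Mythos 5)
Your proof is correct, though it takes a genuinely different route from the paper's. The paper first proves Lemma~\ref{BOmegaOnePointExtension}, which shows how to add a single point to the domain of $f\in\mathcal{X}_0$ while remaining in $\mathcal{X}_0$, and then deduces $\mathcal{X}_0\subseteq\mathcal{X}^{<\omega}$ by a standard back-and-forth argument (using the symmetry $g\in\mathcal{X}_0\iff g^{-1}\in\mathcal{X}_0$ to alternate between extending the domain and the range). You instead exploit the structural decomposition of Proposition~\ref{ppp1} directly: separate the combinatorial part $\tau_f$ from the order-theoretic coordinate maps $f_k$, extend $\tau_f$ all at once to a fixed-point-free permutation $\tau\in S_\omega$ by gluing fresh two-sided tails onto each finite chain of $\tau_f$ and absorbing the leftover naturals into extra $\Z$-orbits, and then extend each $f_k$ independently to an automorphism of $(\Q,\leq)$ by ultrahomogeneity of $\Q$. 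Your argument is more explicit and makes the mechanism behind the one-point-extension lemma transparent (the only obstruction to extending is the emergence of a finite cycle in $\tau$, and the no-partial-cycles hypothesis rules that out), whereas the paper's back-and-forth is shorter on the page but implicitly relies on readers reconstructing the same bookkeeping. Your pigeonhole argument for the easy inclusion $\mathcal{X}^{<\omega}\subseteq\mathcal{X}_0$ is also a correct expansion of what the paper dismisses as ``clear.'' The one detail worth stating precisely in your write-up is the partition bookkeeping when extending the chains: you need to partition the infinite set $\N\setminus S$ into countably many infinite pieces, allot one to each of the finitely many chains and use the remaining pieces for fresh $\Z$-orbits, so that every natural number is used exactly once.
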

To prove it, we need the following:}

\begin{lemma}\label{BOmegaOnePointExtension}
Let $f\in\mathcal{X}_0$ and $(n,p)\in\B_\omega$. There is $k\geq 1$ and an extension $\hat{f}\in\mathcal{X}_0$ of $f$ such that $\hat{f}^k(n,p)$ is defined and $\tau_{\hat{f}}^k(n)\notin\on{dom}\tau_f$. Moreover, {\color{black}if $n\notin\on{dom}\tau_f$, then we can take $k=1$.}
\end{lemma}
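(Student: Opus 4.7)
The plan is to split on whether $n\in\dom\tau_f$ or not, and in each case construct $\hat f$ by an explicit short extension.

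\emph{Case 1:} $n\notin\dom\tau_f$. Then $(n,p)\notin\dom f$ automatically (since $n$ would otherwise appear as a first coordinate in $\dom f$), and I take $k=1$. I pick any $m\in\N\setminus(\dom\tau_f\cup\rng\tau_f\cup\{n\})$, which is possible because the excluded set is finite, and any $q\in\Q$, and set $\hat f:=f\cup\{((n,p),(m,q))\}$. Freshness of $n$ in the domain and $m$ in the range ensures $\hat f\in\Ism(\B_\omega)$, with $\tau_{\hat f}=\tau_f\cup\{(n,m)\}$ and $\tau_{\hat f}(n)=m\notin\dom\tau_f$ as required. To see $\hat f\in\mathcal X_0$, observe $\dom\tau_{\hat f}=\dom\tau_f\cup\{n\}$; the orbit of $n$ exits in one step via $m$, while any $\tau_f$-orbit inside $\dom\tau_f$ either escapes as before (if its original exit point is not $n$) or, if it exits through $n$, continues one extra step and escapes through $m$.

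\emph{Case 2:} $n\in\dom\tau_f$. Let $k\geq 1$ be the least index with $n_k:=\tau_f^k(n)\notin\dom\tau_f$ (such $k$ exists by definition of $\mathcal X_0$), and write $n_j:=\tau_f^j(n)$. The points $n_0,\dots,n_{k-1}\in\dom\tau_f$ are pairwise distinct, since any coincidence $n_i=n_j$ with $i<j<k$ would make the $\tau_f$-orbit of $n$ cycle forever inside $\dom\tau_f$, contradicting the choice of $k$. I build $\hat f$ by iteratively pushing $(n,p)$ forward along this orbit: set $p_0:=p$, and for $j=0,\dots,k-1$ either $(n_j,p_j)\in\dom f$, in which case $p_{j+1}:=f_{n_j}(p_j)$ is forced, or I append a single pair $(p_j,p_{j+1})$ to the section-isomorphism $f_{n_j}\in\Ism(\Q)$, choosing $p_{j+1}\in\Q$ so that $f_{n_j}\cup\{(p_j,p_{j+1})\}$ remains order-preserving and $p_{j+1}\notin(\rng f)^{n_{j+1}}$. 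Both are finite constraints, so such $p_{j+1}$ exists by density of $\Q$. Disjointness of $n_0,\dots,n_{k-1}$ (hence of $n_1,\dots,n_k$ as well, by one-to-oneness of $\tau_f$) guarantees that the successive steps touch disjoint section-maps, so the extensions do not interfere and assemble into a genuine partial isomorphism of $\B_\omega$.

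The resulting $\hat f$ satisfies $\hat f^k(n,p)=(n_k,p_k)$ and $\tau_{\hat f}^k(n)=n_k\notin\dom\tau_f$. Membership $\hat f\in\mathcal X_0$ in Case~2 is then immediate: every new domain first coordinate is some $n_j$ with $j<k$, already in $\dom\tau_f$, so $\dom\tau_{\hat f}=\dom\tau_f$ and $\tau_{\hat f}|_{\dom\tau_f}=\tau_f$; every orbit of $\tau_{\hat f}$ therefore coincides with the corresponding $\tau_f$-orbit and inherits the escape property. The only real technical point is the bookkeeping in Case~2 when combining the successive one-point extensions into a single valid partial isomorphism, but this reduces to independent one-point extensions of order-preserving maps on $\Q$, which is routine.
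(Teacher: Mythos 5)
Your proof is correct and follows essentially the same two-case decomposition as the paper's: take $k=1$ directly when $n\notin\dom\tau_f$, and otherwise push $(n,p)$ forward along the $\tau_f$-orbit of $n$ until the first coordinate exits $\dom\tau_f$, which your explicit section-by-section construction carries out where the paper simply restricts an abstract automorphism extension. Indeed your Case~1 is slightly more careful than the paper's, which only requires $m\notin\dom\tau_f\cup\{n\}$: without also excluding $m\in\rng\tau_f$, the one-point extension $f\cup\{((n,p),(m,0))\}$ need not be order-preserving (if $(m,q)\in\rng f$ then $(m,0)$ and $(m,q)$ are comparable while $(n,p)$ and $f^{-1}(m,q)$ are incomparable, having different first coordinates since $n\notin\dom\tau_f$), so your additional exclusion of $\rng\tau_f$ repairs a small omission.
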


\begin{proof}
If $n\notin\on{dom}\tau_f$, then find $m\notin\on{dom}\tau_f\cup\{n\}$ and define an extension {\color{black}$\hat{f}:=f\cup\{((n,p),(m,0))\}$}. Clearly $\hat{f}\in\mathcal{X}_0$. Note that $\tau_{\hat{f}}(n)=m\notin\dom\tau_f$ which gives us the {\color{black}"moreover part"} of the assertion. 

If $n\in\on{dom}\tau_f$, then find {\color{black}$l\in\N\cup\{0\}$} such that $\tau_f^l(n)\in\on{dom}\tau_f$ and $\tau_f^{l+1}(n)\notin\on{dom}\tau_f$. {\color{black}Take an extension $\tilde{f}\in \Aut(A)$ of $f$, set $X:=\{(n,p),\tilde{f}(n,p),...,\tilde{f}^l(n,p)\}$ and define ${f_1}:=\tilde{f}\vert_{\on{dom}f\cup X}$}. Then $\tau_f=\tau_{f_1}$ and $\tau_{f_1}^{l+1}(n)\notin\on{dom}\tau_{f_1}$. Thus $f_1\in\mathcal{X}_0$ and the point $(n',p')=f_1^{l+1}(n,p)$ has the property that $n'\notin\on{dom}\tau_{f_1}$. Proceeding as in the previous case we find a desired extension $\hat{f}\in\mathcal{X}_0$ of $f_1$. 
\end{proof}

We are ready to {\color{black}prove Lemma \ref{lem:aaa1}}. Clearly {\color{black}$\mathcal{X}^{<\omega}\subseteq\mathcal{X}_0$}. Let $f\in\mathcal{X}$. Note that $g\in\mathcal{X}_0$ if and only if $g^{-1}\in\mathcal{X}_0$. Therefore using Lemma \ref{BOmegaOnePointExtension} and the back-and-forth argument we can inductively define an extension $\tilde{f}\in\mathcal{X}$ of $f$. Thus $\mathcal{X}_0\subseteq\mathcal{X}^{<\omega}$.

\begin{lemma}\label{BOmegaDalekoWBok}
Let $f\in\mathcal{X}_0$ and $A\subset\B_\omega$ {\color{black}be finite}. There is $k\geq 1$ and an extension $\hat{f}\in\mathcal{X}_0$ of $f$ such that $\hat{f}^k(A)$ is defined and {\color{black}$\tau_{\hat{f}}^k(\pi_1(A))\cap A=\emptyset$.}
\end{lemma}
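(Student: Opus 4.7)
The plan is to generalize Lemma \ref{BOmegaOnePointExtension} from one point to a finite set $A$ by iterating the one-point procedure over the elements of $A$ and then synchronizing the resulting iteration lengths to a common value $K$. (I read the conclusion as $\tau_{\hat{f}}^k(\pi_1(A))\cap\pi_1(A)=\emptyset$, since the intersection written in the statement is type-inconsistent.)

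First I would note a routine strengthening of Lemma \ref{BOmegaOnePointExtension}: for any finite $B\subset\N$, we can arrange in addition that $\tau_{\hat{f}}^k(n)\notin\on{dom}\tau_f\cup B$. No new argument is needed, because the only nondeterministic choice in the proof of that lemma is the new target first coordinate $m$, which is selected from the cofinite set $\N\setminus(\on{dom}\tau_f\cup\{n\})$; since $\N$ is infinite, we may simultaneously require $m\notin B$.

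Next, enumerate $A=\{(n_1,p_1),\dots,(n_s,p_s)\}$ and build a chain $f=\hat{f}_0\subseteq\hat{f}_1\subseteq\dots\subseteq\hat{f}_s$ in $\mathcal{X}_0$ together with integers $k_1,\dots,k_s\geq 1$ as follows. At step $i$, apply the strengthened Lemma \ref{BOmegaOnePointExtension} to $\hat{f}_{i-1}$, the point $(n_i,p_i)$, and the finite set $B:=\pi_1(A)$, producing $\hat{f}_i\in\mathcal{X}_0$ and $k_i\geq 1$ with $\hat{f}_i^{k_i}(n_i,p_i)$ defined and $\tau_{\hat{f}_i}^{k_i}(n_i)\notin\on{dom}\tau_{\hat{f}_{i-1}}\cup\pi_1(A)$. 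Setting $K:=\max_i k_i$, for each $i$ with $k_i<K$ I would further extend $\hat{f}_s$ along the orbit of $(n_i,p_i)$ by $K-k_i$ additional pairs, each time sending the current endpoint $(n,q)$ to a pair of the form $(n',0)$ where $n'$ is picked outside $\pi_1(A)\cup\on{dom}\tau\cup\on{rng}\tau$ for the current function. The final $\hat{f}\in\mathcal{X}_0$ then satisfies $\hat{f}^K(A)\subseteq\B_\omega$ defined and $\tau_{\hat{f}}^K(n)\notin\pi_1(A)$ for every $n\in\pi_1(A)$.

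The main point to watch is that each successive extension remains a partial isomorphism in $\mathcal{X}_0$; that is, no inconsistent image is assigned and no finite cycle is introduced. This is handled uniformly by always routing the new target through a fresh first coordinate, outside the current $\on{dom}\tau\cup\on{rng}\tau$ (and outside $\pi_1(A)$). Such a choice keeps every newly extended orbit open-ended and disjoint from the previously existing orbits, so successive extensions cannot conflict with each other and no cycle in $\tau$ can be created along the way.
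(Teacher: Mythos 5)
Your reading of the conclusion as $\tau_{\hat f}^k(\pi_1(A))\cap\pi_1(A)=\emptyset$ is the correct fix for the type mismatch in the statement. Your route is genuinely different from the paper's. The paper first extends $f$ to a full automorphism $\tilde f\in\mathcal{X}$ using Lemma \ref{lem:aaa1}; since $\tau_{\tilde f}$ has no finite cycles, each forward $\tau_{\tilde f}$--orbit is infinite, hence meets the finite set $\pi_1(A)$ only finitely often, so a single $k_0$ works simultaneously for every $n\in\pi_1(A)$, and $\hat f$ is just the restriction of $\tilde f$ to the finite set $\on{dom}f\cup A\cup\tilde f(A)\cup\dots\cup\tilde f^{k_0-1}(A)$. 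That sidesteps all orbit-level bookkeeping.

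Your iterative route can be made to work, but there is a step you glide over: why $\tau_{\hat f}^K(n_i)\notin\pi_1(A)$. It is not automatic from the one-point lemma plus synchronization, because a later application of Lemma \ref{BOmegaOnePointExtension} at step $j>i$ may itself extend the $\tau$--orbit of $n_i$ past position $k_i$ (this happens exactly when the forward open end of the orbit of $n_j$ in $\tau_{\hat f_{j-1}}$ coincides with your fresh endpoint $m_i$). Consequently, by the time you synchronize, the orbit of $n_i$ may already be longer than $k_i$, and $\tau_{\hat f}^K(n_i)$ need not be one of the fresh coordinates added during synchronization. What saves the argument is that \emph{every} first coordinate ever attached to the orbit of $n_i$ past position $k_i$ --- whether during a later one-point extension or during synchronization --- is a fresh coordinate chosen (via your strengthened lemma) outside $\pi_1(A)$, so in fact $\tau_{\hat f}^t(n_i)\notin\pi_1(A)$ for \emph{all} $t\geq k_i$ at which it is defined; in particular for $t=K$. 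Making that observation explicit closes the argument. The paper's route buys simplicity by passing to a genuine permutation first; your route stays entirely at the level of finite isomorphisms but needs this extra bookkeeping.
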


\begin{proof}{\color{black}Let $A=\{(n_i,p_i):i\leq j\}$. By Lemma \ref{lem:aaa1}, there exists an extension $\tilde{f}\in\mX$ of $f$. Since $\tau_{\tilde{f}}$ does not contain cycles, for every $i=1,...,j$, there is $k_i\in\N$ such that for $k\geq k_i$, $\tau_{\tilde{f}}^{k_i}(n_i)>\pi_1(A)$. Hence take $k_0:=\max\{k_1,...,k_j\}$, choose $X=A\cup\{\tilde{f}^l(A):l=1,...,k_0-1\}$ and set $\hat{f}:=\tilde{f}\vert_{X\cup\on{dom}f}$. Then $\hat{f}\in\mX_0$ and $\tau_{\hat{f}}^{k_0}(A)\cap \pi_1(A)=\emptyset$.}
\end{proof}

\begin{lemma}\label{UnionOfTwoPartialIsomorphisms}
Let $h_0,h_1\in\Ism(\B_\omega)$ be such that 
\[
\pi_1(\dom h_0\cup\img h_0)\cap\pi_1(\dom h_1\cup\img h_1)=\emptyset.
\]
Then $h_0\cup h_1\in\Ism(\B_\omega)$.
\end{lemma}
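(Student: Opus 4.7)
The plan is to check directly the three conditions that characterize a partial isomorphism of $\B_\omega$: (1) $h_0\cup h_1$ is a well-defined function, (2) it is injective, and (3) it preserves the order relation in both directions. The key observation driving the whole argument is that in $\B_\omega$ two points are comparable iff they share the first coordinate; hence the hypothesis on $\pi_1$-projections essentially forces the two parts $h_0,h_1$ to act on order-theoretically independent pieces.

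First I would note that $\dom h_0\cap\dom h_1=\emptyset$ and $\img h_0\cap\img h_1=\emptyset$, since both intersections project under $\pi_1$ to $\pi_1(\dom h_0\cup\img h_0)\cap\pi_1(\dom h_1\cup\img h_1)=\emptyset$. Consequently $h_0\cup h_1$ is a function, and since $h_0$ and $h_1$ are individually injective with disjoint ranges, $h_0\cup h_1$ is injective as well. This takes care of (1) and (2).

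For order preservation, take any $x,y\in\dom(h_0\cup h_1)$ with $x\leq y$. By the definition of the order on $\B_\omega$ we have $\pi_1(x)=\pi_1(y)$. If $x\in\dom h_0$ and $y\in\dom h_1$, then $\pi_1(x)\in\pi_1(\dom h_0)$ and $\pi_1(y)\in\pi_1(\dom h_1)$, contradicting the disjointness assumption; symmetrically for the opposite case. Hence $x,y$ belong to the domain of the same $h_i$, and since $h_i\in\Ism(\B_\omega)$, we get $(h_0\cup h_1)(x)=h_i(x)\leq h_i(y)=(h_0\cup h_1)(y)$. The converse direction is identical: if $(h_0\cup h_1)(x)\leq (h_0\cup h_1)(y)$, then $\pi_1$ of these images agree, so one image lies in $\img h_0$ and the other in $\img h_1$ would again contradict the hypothesis on $\pi_1(\img h_0)\cap\pi_1(\img h_1)=\emptyset$; hence both lie in the same $\img h_i$ and injectivity of $h_i$ together with its being a partial isomorphism yields $x\leq y$.

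The main (and only) obstacle is essentially bookkeeping: one must keep track that the hypothesis disjointness applies not only to domains versus domains and ranges versus ranges, but also across domains and ranges, which is needed to rule out the ``mixed'' cases above. Since the hypothesis is formulated on $\dom h_0\cup\img h_0$ versus $\dom h_1\cup\img h_1$, this is exactly what is provided, so the verification goes through cleanly and no further construction is required.
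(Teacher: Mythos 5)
Your proof is correct and takes essentially the same approach as the paper: observe that the hypothesis forces disjointness of domains and of images (giving a well-defined injection), and then use the fact that comparability in $\B_\omega$ forces equal first coordinates, which by the $\pi_1$-disjointness rules out the mixed case, so both points must lie in the domain (resp.\ range) of the same $h_i$. The paper organizes the order-preservation step by cases on the pair (same $h_i$ vs.\ different $h_i$, the latter giving incomparability on both sides), while you argue the two directions of the biconditional separately, but the underlying reasoning is identical.
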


\begin{proof}
Note that $\dom h_0\cap \dom h_1=\emptyset=\img h_0\cap\img h_1$. As we have mentioned in Introduction, this implies that $h:=h_0\cup h_1$ is a one-to-one function. We need to show that $h$ is order-preserving. Let $(n,p),(k,q)\in\B_\omega$. If $(n,p),(k,q)\in\dom h_i$, then $(n,p)\leq(k,q)\iff h_i(n,p)\leq h_i(k,q){\color{black}\iff} h(n,p)\leq h(k,q)$. If $(n,p)\in\dom h_0$ and $(k,q)\in\dom h_1$, then $n\neq k$ which means that the points $(n,p)$ and $(k,q)$ are $\leq$-incomparable. Since the first coordinates of $h(n,p)=h_0(n,p)$ and $h(k,q)=h_1(k,q)$ are different, $h(n,p)$ and $h(k,q)$ are $\leq$-incomparable as well. 
\end{proof}

\begin{lemma}\label{WordLemma}
Let $w(a,b)$ be a word of the form {\color{black}$a^{m_k}b^{n_k}\dots a^{m_0}b^{n_0}$} where $n_0,m_k\in\Z$ and {\color{black}only $n_0,m_k$ can be $0$}. Let $Y=\{1,2\dots,{\color{black}M+1}\}$ where ${\color{black}M}=\sum_{i=0}^k({\color{black}|n_i|+|m_i|})$. Then there are $A,B\subseteq Y$ and $u:A\to Y$, $v:B\to Y$ such that $u,v$ have no cycles and $w(u,v)(1)={\color{black}M+1}$. 
\end{lemma}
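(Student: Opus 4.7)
The idea is to construct $u$ and $v$ by walking through $w$ letter-by-letter along a straight path in $Y$. First I would set $y_i := i+1$ for $0 \leq i \leq M$, so the walk starts at $y_0 = 1$ and ends at $y_M = M+1$. Read the word $w(a,b) = a^{m_k}b^{n_k}\cdots a^{m_0}b^{n_0}$ in the order its letters are applied (right to left); this gives a sequence of exactly $M$ one-letter applications. At the $i$-th step ($0 \leq i \leq M-1$), if the applied letter is $a^{+1}$ I put $u(y_i) := y_{i+1}$; if it is $a^{-1}$ I put $u(y_{i+1}) := y_i$; and analogously for the letter $b$ using $v$. By the very construction, $w(u,v)(1)$ then equals $y_M = M+1$, and the domains and ranges of $u$ and $v$ lie in $Y$.

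What remains is to verify that $u$ and $v$ are well-defined injective partial functions without cycles. The key observation is that, since only $n_0$ and $m_k$ can vanish, consecutive blocks of the same letter in $w$ are separated in the step sequence by at least one step of the other letter. A single $a$-block $a^{m_l}$ of length $|m_l|$ starting at step $s$ contributes $|m_l|$ assignments to $u$, forming a single chain of length $|m_l|+1$ on the index interval $\{s+1, s+2, \dots, s+|m_l|+1\}$, oriented forward or backward depending on the sign of $m_l$. For two distinct $a$-blocks starting at steps $s < s'$, the separation by at least one $b$-step gives $s' \geq s + |m_l| + 1$, hence the corresponding index intervals $[s+1, s+|m_l|+1]$ and $[s'+1, s'+|m'_l|+1]$ are disjoint. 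This disjointness forces $u$ to be both well-defined and injective, and guarantees that the chains coming from distinct blocks do not merge. The same argument applies verbatim to $v$.

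The main obstacle I anticipate is the absence of cycles, which reduces to a short case analysis over the four sign combinations for two consecutive same-letter blocks; in every combination the separation inequality $s' \geq s + |m_l| + 1$ yields the required disjointness of the index intervals, and this disjointness is uniform enough to carry all four cases at once. Once it is in place, each chain of $u$ is an isolated finite chain whose last element has no $u$-image (since no other block extends $u$ past the chain's endpoint), so iterating $u$ from any point reaches an undefined value in finitely many steps and no cycle can form. The identical argument for $v$ completes the proof and gives $w(u,v)(1) = M+1$ as required.
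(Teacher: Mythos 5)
Your proof is correct and produces essentially the same $u,v$ as the paper's, but organized as a direct global construction rather than an induction. The paper proceeds by induction on the number of blocks $a^{m_i}b^{n_i}$, with a case analysis on the signs of $m_0,n_0$ in the base step and a side invariant (if $m_k\neq 0$ then $M+1\notin\dom v\cup\rng v$) to make the inductive gluing well-defined. You lay down the path $y_i=i+1$ for $i=0,\dots,M$ all at once and define one assignment per letter of $w$ read right-to-left; well-definedness, injectivity and freedom from cycles then follow uniformly across all sign combinations from the single observation that the $Y$-intervals occupied by two distinct $a$-blocks (resp.\ two distinct $b$-blocks) are disjoint, because consecutive same-letter blocks are always separated by at least one step of the other letter --- which is exactly where the hypothesis that only $n_0,m_k$ may vanish is used. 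The global phrasing replaces the paper's sign case work and inductive invariant by a short disjointness argument and is a modest streamlining; the resulting partial maps coincide with those built in the paper.
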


\begin{proof} We proceed inductively with respect to $k$, with additional requirement that:
\begin{center}if $m_k\neq 0$, then $M+1\notin \on{dom}v\cup\on{rng}v$.\end{center}
Let $w=a^{m_0}b^{n_0}$. Consider cases:
\begin{itemize}
\item $m_0,n_0>0$. Then we put $B_0:=\{1,...,n_0\},\;A_0:=\{n_0+1,...,n_0+m_0\}$ and define $v:B_0\to\{1,...,n_0+m_0+1\}$ and $u:A_0\to\{1,...,n_0+m_0+1\}$ by $v(a):=a+1$ and $u(a):=a+1$.
\item $m_0<0,\;n_0>0$. Then we put $B_0:=\{1,...,n_0\},\;A_0:=\{n_0+2,...,n_0+|m_0|+1\}$ and define $v:B_0\to\{1,...,n_0+m_0+1\}$ and $u:A_0\to\{1,...,n_0+|m_0|+1\}$ by $v(a):=a+1$ and $u(a):=a-1$.
\item $m_0>0,n_0<0$ or $m_0,n_0<1$ - we define $v,u$ is analogous way.
\item $m_0>0,\;n_0=0$. We set $B_0:=\emptyset$, $A_0:=\{1,...,m_0\}$, $v=\emptyset$, $u(a):=a+1$.
\item $m_0<0,\;n_0=0$ or $m_0=0,\;n_0>0$ or $m_0=0,\;n_0<0$ - we define $v,u$ in analogous way.
\end{itemize}
Then $w(1)=|n_0|+|m_0|+1$ and if $m_0\neq 0$, then $|n_0|+|m_0|+1\notin \on{dom}v\cup\on{rng}v$.

Assume that we have already defined $v,u$ for some word $w=a^{m_k}b^{n_k}\dots a^{m_0}b^{n_0}$ with $m_k\neq 0$, and consider a word $w'=a^{m_{k+1}}b^{n_{k+1}}w$, where $n_{k+1}\neq 0$. Set $M:=\sum_{i=0}^k(|m_i|+|n_i|)$. Similarly as in the initial step, we can define $\overline{A},\overline{B}\subseteq\{M+1,...,M+|n_{k+1}|+|m_{k+1}|+1\}$ and $\overline{v}:\overline{B}\to \{M+1,...,M+|n_{k+1}|+|m_{k+1}|+1\}$ and $\overline{u}:\overline{A}\to \{M+1,...,M+|n_{k+1}|+|m_{k+1}|+1\}$ so that $\overline{u}^{m_{k+1}}(\overline{v}^{n_{k+1}}(M+1))=M+|n_{k+1}|+|m_{k+1}|+1$. Then $u:=u'\cup\overline{u}$ and $v:=v'\cup\overline{v}$ satisfy required conditions.
\end{proof}

\begin{lemma}(Key Lemma)\label{KeyLemmaBOmega}
Let $f\in\mathcal{X}_0$ and $g\in\Ism(\B_\omega)$. Let $X$ be a finite subset of $\B_\omega$ and let $w(a,b)$ be {\color{black}an irreducible word}. Then there are a natural number $k\in{\color{black}\N}$ and partial isomorphisms $\tilde{f}\in\mathcal{X}_0$ and  $\tilde{g}\in\Ism(\B_\omega)$ such that
\begin{enumerate}
\item $f\subseteq\tilde{f}$, $g\subseteq\tilde{g}$;
\item for any $h_1,h_2\in\Ism(\B_\omega)$ such that ${\color{black}\dom h_1\cup\img h_1}\subseteq X$ and ${\color{black}\dom h_2\cup\img h_2}\subseteq\tilde{f}^k(X)$, the function {\color{black}$h_1\cup h_2\in\Ism(\B_\omega)$};
\item there is $y\in\B_\omega$ such that $w(\tilde{f},\tilde{g})(y)\neq y$. 
\end{enumerate}
\end{lemma}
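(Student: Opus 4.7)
The plan is to separate the three requirements and handle (ii) and (iii) in two disjoint stages, exploiting the fact that $\B_\omega$ has infinitely many ``columns'' so a construction on fresh first coordinates does not interfere with anything prescribed so far. Observe first that Lemma \ref{UnionOfTwoPartialIsomorphisms} reduces (ii) to arranging $\pi_1(X)\cap\pi_1(\tilde{f}^k(X))=\emptyset$, and that once $\tilde{f}^k$ is fully determined on $X$ with that property, every further extension of $\tilde{f}$ preserves it. Accordingly, I would first apply Lemma \ref{BOmegaDalekoWBok} to $f$ and $X$ to obtain $f_1\in\mathcal{X}_0$ with $f\subseteq f_1$ and some $k\geq 1$ such that $f_1^k$ is defined on $X$ and $\tau_{f_1}^k(\pi_1(X))\cap\pi_1(X)=\emptyset$. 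This locks in condition (ii) for any extension $\tilde{f}\supseteq f_1$, since $\tilde{f}^k(x)=f_1^k(x)$ for all $x\in X$.

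To enforce (iii) without disturbing what was achieved in the first stage, I would realize the word $w$ on an entirely fresh block of first coordinates. Write $w(a,b)$ in the form $a^{m_k}b^{n_k}\cdots a^{m_0}b^{n_0}$ permitted by Lemma \ref{WordLemma} and set $M=\sum_{i=0}^k(|m_i|+|n_i|)$. Lemma \ref{WordLemma} yields sets $A,B\subseteq\{1,\ldots,M+1\}$ and cycle-free partial maps $u:A\to\{1,\ldots,M+1\}$ and $v:B\to\{1,\ldots,M+1\}$ with $w(u,v)(1)=M+1$. Choose pairwise distinct $N_1,\ldots,N_{M+1}\in\N$ disjoint from $\pi_1\bigl(\on{dom}f_1\cup\on{rng}f_1\cup\on{dom}g\cup\on{rng}g\cup X\cup f_1^k(X)\bigr)$, and define
\[
s:=\{((N_i,0),(N_{u(i)},0)):i\in A\},\qquad r:=\{((N_i,0),(N_{v(i)},0)):i\in B\}.
\]
Since points with different first coordinates are $\leq$-incomparable, $s$ and $r$ are partial isomorphisms, and Lemma \ref{UnionOfTwoPartialIsomorphisms} gives that $\tilde{f}:=f_1\cup s$ and $\tilde{g}:=g\cup r$ lie in $\Ism(\B_\omega)$. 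Because $u$ has no cycles and $\{N_1,\ldots,N_{M+1}\}$ is disjoint from $\on{dom}\tau_{f_1}\cup\on{rng}\tau_{f_1}$, the permutation $\tau_{\tilde{f}}$ decomposes as a disjoint union of two cycle-free partial permutations, so $\tilde{f}\in\mathcal{X}_0$.

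Condition (i) holds by construction. Condition (ii) follows from the first stage via Lemma \ref{UnionOfTwoPartialIsomorphisms}, since $\tilde{f}^k(X)=f_1^k(X)$. For condition (iii), by construction one has $w(\tilde{f},\tilde{g})(N_1,0)=(N_{w(u,v)(1)},0)=(N_{M+1},0)\neq(N_1,0)$. The main obstacle in the argument is the tension between (iii) and the cycle-freeness required by $\mathcal{X}_0$: any naive extension realizing $w$ nontrivially on points already in the orbit of $\tau_{f_1}$ risks creating a finite cycle. Performing the word realization on an entirely fresh block of columns, where Lemma \ref{WordLemma} supplies a cycle-free template, is precisely what lets (iii) and membership in $\mathcal{X}_0$ coexist.
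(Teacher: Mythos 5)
Your proof is correct and takes essentially the same route as the paper: both realize the word $w$ on a fresh, disjoint block of first coordinates via Lemma~\ref{WordLemma} and Lemma~\ref{UnionOfTwoPartialIsomorphisms}, and both invoke Lemma~\ref{BOmegaDalekoWBok} to separate $\pi_1(X)$ from $\pi_1(\tilde{f}^k(X))$. The only difference is the order of the two stages (you do the column-separation first, the paper does the word realization first), which is immaterial since the stages act on disjoint columns.
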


\begin{proof}
Since $f$ and $g$ are partial isomorphisms, there is $N\in{\color{black}\N}$ such that $\pi_1(\dom f\cup \img f\cup \dom g\cup\img g)\subseteq\{0,1,\dots,N\}$. Let $u,v,A,B,Y$ be as in the assertion of Lemma \ref{WordLemma} for the word $w$ {\color{black}(which, clearly, can be written in the appropriate form)}. For a set $L\subset{\color{black}\N}$ by $L+N$ we understand the set $\{l+N:l\in L\}$. Define $u':A+N\to Y+N$ and $v':B+N\to Y+N$ by the formulas $u'(l+N)=u(l)+N$ and $v'(l+N)=v(l)+N$. Clearly {\color{black}$w(u',v')(N+1)=N+M+1$ where $M$ is as in Lemma \ref{WordLemma}}. {\color{black}Now for $l\in\on{dom}u'$, define $f_u(l,0):=(u'(l),0)$ and, similarly, for $l\in\on{dom}v'$, define $g_v(l,0):=(v'(l),0)$.} Let {\color{black}$f_1:=f\cup f_u$ and $g_1:=g\cup g_v$. Since $f_u,g_v\in\Ism(\B_\omega)$,} by Lemma \ref{UnionOfTwoPartialIsomorphisms} we obtain that $f_1,g_1\in\Ism(\B_\omega)$. Since $u'$ have no cycles, {\color{black}we proved that} $f_1\in\mathcal{X}_0$. 

By Lemma \ref{BOmegaDalekoWBok} there are $k\in{\color{black}\N}$ and an extension $\tilde{f}{\color{black}\in\mX_0}$ of $f_1$ such that $\tilde{f}^k(X)$ is defined and $\tau^k_{\tilde{f}}(\pi_1(X))$ is disjoint with ${\color{black}X}$.  By Lemma \ref{UnionOfTwoPartialIsomorphisms} we obtain ({\color{black}ii}). Put $\tilde{g}=g_1$ and observe that (i) and (iii) are fulfilled as well {\color{black}(as $w(\tilde{f},\tilde{g})(N+1,0)=(N+M+1,0)$).} 
\end{proof}

Let us observe that $\mathcal{X}$ is a $G_\delta$ subset of $\Aut(\B_\omega)$. Indeed, fix $(k_1,p_1),(k_2,p_2),\dots,(k_l,p_l)\in\B_\omega$ such that $k_1=k_l$. Then the set $\{f\in\Aut(\B_\omega):f(k_i,p_i)=(k_{i+1},p_{i+1})\text{ for }i<l\}$ is {\color{black}clopen} subset of $\Aut(\B_\omega)$. {\color{black}Clearly,} $f$ has a finite cycle if and only if there are $(k_1,p_1),(k_2,p_2),\dots,(k_l,p_l)\in\B_\omega$  such that $k_1=k_l$ and $f(k_i,p_i)=(k_{i+1},p_{i+1})$ for $i<l$. Therefore the set $\{f\in\Aut(\B_\omega):f \text{ has a finite cycle }\}$ is an $F_\sigma$ subset of $\Aut(\B_\omega)$. Thus $\mathcal{X}$ is $G_\delta$ in $\Aut(\B_\omega)$. 

Using Key Lemma and {\color{black}Theorem \ref{SecondGeneralTheorem}} we obtain the following.

\begin{corollary}~
\begin{enumerate}
\item The set
\[
\{(f,g)\in\mathcal{X}\times\Aut(\B_\omega):f\text{ and }g\text{ freely generate a dense subgroup of }\Aut(\B_\omega)\}
\]
is comeager in $\mathcal{X}\times\Aut(\B_\omega)$.
\item {\color{black}For every $m\in\N$, t}he set of cyclically dense elements $\bar{g}\in\Aut(\B_\omega)^m$ for the diagonal action in comeager in $\Aut(\B_\omega)^m$. 
\end{enumerate}
\end{corollary}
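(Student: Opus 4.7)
The plan is to derive both parts of the corollary by directly applying Theorem~\ref{SecondGeneralTheorem} with $A=\B_\omega$ and with $\mathcal{X}$ the $G_\delta$ subset of $\Aut(\B_\omega)$ consisting of those automorphisms whose associated permutation of $\N$ has no finite cycles. The $G_\delta$ property of $\mathcal{X}$ has just been verified, and by Lemma~\ref{lem:aaa1} we may identify $\mathcal{X}^{<\omega}$ with $\mathcal{X}_0$, so the hypotheses of Theorem~\ref{SecondGeneralTheorem} can be read off in terms of $\mathcal{X}_0$.

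First I would check the main hypothesis of Theorem~\ref{SecondGeneralTheorem}. Given $f_0\in\mathcal{X}^{<\omega}=\mathcal{X}_0$ and a nonempty finite $X\subset\B_\omega$, apply the Key Lemma~\ref{KeyLemmaBOmega} to $f_0$, to any fixed $g\in\Ism(\B_\omega)$, to the set $X$, and to any fixed irreducible word $w$. The lemma produces $\tilde{f}\in\mathcal{X}_0$ extending $f_0$ and $k\in\N$ such that for any $u,v\in\Ism(\B_\omega)$ with $\dom u\cup\img u\subseteq X$ and $\dom v\cup\img v\subseteq \tilde{f}^{k}(X)$ the union $u\cup v$ lies in $\Ism(\B_\omega)$. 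These are precisely conditions (i) and (ii) of the theorem (with $f_1:=\tilde{f}$). Next I would check the additional hypothesis used for the density of a free subgroup: given $(f_0,g_0)\in\mathcal{X}^{<\omega}\times\Ism(\B_\omega)$ and an irreducible word $w$, the same Key Lemma (now used for its conclusion (iii)) provides extensions $(f_1,g_1)\in\mathcal{X}^{<\omega}\times\Ism(\B_\omega)$ with $w(f_1,g_1)(y)\neq y$ for some $y$. Hence the additional hypothesis of Theorem~\ref{SecondGeneralTheorem} is also met.

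With both hypotheses verified, the second conclusion of Theorem~\ref{SecondGeneralTheorem} gives part~(i) at once: the pairs $(f,g)\in\mathcal{X}\times\Aut(\B_\omega)$ freely generating a dense subgroup of $\Aut(\B_\omega)$ form a comeager subset of $\mathcal{X}\times\Aut(\B_\omega)$. The first conclusion of the same theorem gives that the cyclically dense $\bar g\in\Aut(\B_\omega)^m$ for the diagonal action form, together with their witnessing $f$, a comeager subset of $\mathcal{X}\times\Aut(\B_\omega)^m$. To obtain part~(ii) I would then appeal to the Kuratowski--Ulam theorem: the comeager set in $\mathcal{X}\times\Aut(\B_\omega)^m$ has a comeager vertical section over a comeager set of $f\in\mathcal{X}$; since being cyclically dense is a property of $\bar g$ alone (with $f$ existentially quantified), the set of cyclically dense $\bar g$ contains such a vertical section and is therefore comeager in $\Aut(\B_\omega)^m$.

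The substantive combinatorics is already embedded in the Key Lemma, so no further extension argument is required here. The only step demanding any care is the last one: the conclusion of Theorem~\ref{SecondGeneralTheorem} is stated as comeagerness in $\mathcal{X}\times\Aut(\B_\omega)^m$, and I would expect the mildest obstacle to be cleanly extracting comeagerness in $\Aut(\B_\omega)^m$ alone via Kuratowski--Ulam, noting that cyclic density is an existentially-quantified property of $\bar g$.
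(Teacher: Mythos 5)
Your proposal is correct and follows essentially the same route the paper takes: apply Lemma~\ref{KeyLemmaBOmega} to verify both the main hypothesis and the additional hypothesis of Theorem~\ref{SecondGeneralTheorem}, then read off the two conclusions. The Kuratowski--Ulam step you flag at the end is indeed the only delicate point, but it is already carried out inside the proof of Theorem~\ref{SecondGeneralTheorem} (whose statement is slightly loosely worded), so your explicit treatment of it is a harmless redundancy rather than a gap.
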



\section{$\mC_n$ -- the chain of antichains}\label{SectionC}

{\color{black}Let $n\leq\omega$. Recall that by $\mC_n$ we mean the partially ordered set $(\{1,...,n\}\times\Q,\leq)$, provided $n<\omega$, and $(\N\times\Q,\leq)$, if $n=\omega$, where $\leq$ is given by $(k,p)\leq (l,q)\iff p\leq q$.\\
Again, we will identify $\mC_n$ with the underlying set.}\\
We say that $F\in \Ism(\mC_n)$ is \emph{positive} if for every $(k,p)\in\on{dom}F$, $\pi_2(F(k,p))>p$. The family of all positive partial isomorphisms is denoted by $\Ism_+(\mC_n)$.  
{\color{black}The following result is a counterpart of Propositions \ref{pp1} and \ref{ppp1}. If $X\subset \mC_n$ and $p\in\Q$, then we set $X_p:=\{k\in\omega:(k,p)\in X\}$. 

\begin{proposition}~
\begin{enumerate}
\item Let $F:\mC_n\to\mC_n$. $F\in\on{Aut}(\mC_n)$ iff there exist $f_F\in\on{Aut}(\Q)$ and $\tau_{F,p}\in S_n$, $p\in\Q$, such that $F(k,p)=(\tau_{F,p}(k),f_F(p))$ for every $(k,p)\in\mC_n$.
\item Let $F:X\to \mC_n$ for some finite set $X\subset \mC_n$, and let $N_F:=\{p\in\Q:X_p\neq\emptyset\}$. Then $F\in \Ism(\mC_n)$ iff there exist $f_F\in\Ism(\Q)$ with $\dom f_F=N_F$ and one-to-one maps $\tau_{F,p}:X_p\to\{k\in\omega:k\leq n\}$, $p\in N_F$, such that $F(k,p)=(\tau_{F,p}(k),f_F(p))$ for every $(k,p)\in X$. Additionally, $F\in\Ism_+(\mC_n)$ iff $f_F\in\Ism_+(\Q)$.
\end{enumerate}
\end{proposition}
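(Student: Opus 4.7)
The plan is to follow the template of Proposition~\ref{pp1}, exploiting the observation that $\mC_n$ is the ``dual'' situation to $\B_n$: in $\B_n$ two points are incomparable iff their first coordinates differ, whereas in $\mC_n$ two distinct points $(k,p)$ and $(l,q)$ are incomparable iff $p=q$. Thus antichains in $\mC_n$ are precisely the fibers at fixed second coordinate, so I expect every $F\in\Aut(\mC_n)$ to act by a single order automorphism $f_F\in\Aut(\Q)$ on the second coordinate, together with a level-indexed family of permutations $\tau_{F,p}\in S_n$ acting on the first.

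For the forward direction of (i), the first step is to show that $\pi_2(F(k,p))$ is independent of $k$: for $k\neq l$ the points $(k,p)$ and $(l,p)$ are incomparable, hence their images are incomparable too, which forces their second coordinates to coincide. This defines $f_F(p):=\pi_2(F(k,p))$. For $p<q$ and arbitrary $k,l$ one has $(k,p)<(l,q)$, hence $F(k,p)<F(l,q)$, giving $f_F(p)<f_F(q)$; so $f_F$ is strictly increasing and in particular injective. Surjectivity follows from surjectivity of $F$: any $q\in\Q$ equals $\pi_2(F(k,p))=f_F(p)$ for some $(k,p)$. Thus $f_F\in\Aut(\Q)$. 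Setting $\tau_{F,p}(k):=\pi_1(F(k,p))$, the restriction of $F$ to the antichain $\{(k,p):k\leq n\}$ is a bijection onto $\{(k,f_F(p)):k\leq n\}$, which gives $\tau_{F,p}\in S_n$. The converse is a routine verification that an $F$ of the prescribed form is a bijection of $\mC_n$ preserving the strict order, since $(k,p)<(l,q)\iff p<q\iff f_F(p)<f_F(q)\iff F(k,p)<F(l,q)$.

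For (ii), I would reduce to (i) by extension. Given $F\in\Ism(\mC_n)$, ultrahomogeneity of $\mC_n$ supplies some $\tilde{F}\in\Aut(\mC_n)$ extending $F$; applying (i) to $\tilde{F}$ and then restricting $f_{\tilde F}$ to $N_F$ and each $\tau_{\tilde F,p}$ to $X_p$ yields the required data. The converse is again a direct order-preservation check on the finite set $X$. The positivity claim is immediate once $f_F$ exists: $F\in\Ism_+(\mC_n)$ means $\pi_2(F(k,p))>p$ for every $(k,p)\in\dom F$, which is literally $f_F(p)>p$ for every $p\in N_F$, i.e.\ $f_F\in\Ism_+(\Q)$. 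No substantive obstacle is anticipated, as the whole argument is a transcription of the $\B_n$-case with the roles of the two coordinates swapped; the only genuinely $\mC_n$-specific step is the well-definedness of $f_F$, which uses precisely that antichains in $\mC_n$ are horizontal slices.
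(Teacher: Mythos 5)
Your proof is correct and follows essentially the same route as the paper: define $f_F$ via the second coordinate, check well-definedness from the incomparability of same-level points, obtain $\tau_{F,p}$ from the bijection between horizontal slices, and handle (ii) by extending to a full automorphism via ultrahomogeneity and restricting. Your write-up is in fact slightly more explicit than the paper's on why $f_F$ is well defined and bijective, but there is no substantive difference in method.
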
}
\begin{proof}{\color{black}We first prove (i). Let $F\in\Aut(\mC_n)$. For $p\in\Q$, set $f_F(p):=\pi_2(F(k,p))$ for some $k\in\N$. The map $f_F$ is well defined since for $k,l\in\N$, $\pi_2(F(k,p))=\pi_2(F(l,p))$ (as $F$ is partial isomorphism)}. Take any rational numbers $p,q$ with $p{\color{black}\leq}q$. Then $(k,p){\color{black}\leq}(m,q)$, and consequently $F(k,p){\color{black}\leq}F(m,q)$. Thus $f(p){\color{black}\leq}f(q)$ which means that $f$ is an authomorphism of $(\Q,{\color{black}\leq})$. {\color{black}By the above observations, for every $p\in\Q$ we also have} $F(\{k\in\omega:k\leq n\}\times\{p\})=\{k\in\omega:k\leq n\}\times\{f(p)\}${\color{black}. Since $F$ is a bijection, then $\tau_{F,p}$ defined by $\tau_{F,p}(k)=\pi_1(F(k,p))$} is in $S_n$.\\{\color{black}Now if $F(k,p)=(\tau_{F,p}(k),f_F(p))$ for some permutations $\tau_{F,p}$ and $f_F\in\Aut(\Q)$, then it is routine to check that $F\in\Aut(\mC_n)$.\\
Now we show (ii). If $F\in\Ism(\mC_n)$, then we can find its extension $\tilde{F}\in\Aut(\mC_n)$. Then $f_F$ and appropriate $\tau_{F,p}$ are restrictions of $f_{\tilde{F}}$ and $\tau_{\tilde{F},p}$.\\
The opposite implication is obvious, as well as the last part of the statement.}
\end{proof}{\color{black}
\begin{remark}\emph{Let us remark that in the case $n<\infty$ and $F\in\Ism(\mC_n)$, the partial permutations $\tau_{F,p}$ may belong to $S_n$.}
\end{remark}}

\begin{lemma}\label{CRozszerzanieOJedenPunkt}
Let {\color{black}$F\in\Ism_+(\mC_n)$} and $(k,p)\in\mC_n$.
\begin{enumerate}
\item There is a positive extension $\bar{F}$ of $F$ such that $(k,p)\in\on{dom}\bar{F}$. 
\item There is a positive extension $\bar{F}$ of $F$ such that $(k,p)\in\on{rng}\bar{F}$.
\end{enumerate}
\end{lemma}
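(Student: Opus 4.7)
The plan is to mimic the structure of Lemma \ref{fil4}, adapted to the richer geometry of $\mC_n$. Every $F\in\Ism_+(\mC_n)$ decomposes as $F(k,p)=(\tau_{F,p}(k),f_F(p))$ with $f_F\in\Ism_+(\Q)$ and the $\tau_{F,p}$'s one-to-one maps on fibers, so extending $F$ means extending either (or both of) $f_F$ on $\Q$ and the fiberwise partial permutations.

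For part (i), assume $(k,p)\notin\on{dom}F$. I would split according to whether $p\in\on{dom}f_F$. If $p\in\on{dom}f_F$, the horizontal image $f_F(p)$ is fixed and one only needs to extend $\tau_{F,p}$ by mapping $k$ to some $j$ with $(j,f_F(p))\notin\on{rng}F$; for $n=\omega$ any fresh $j\in\N$ works, and for finite $n$ such $j$ exists because the fiber of $\on{dom}F$ over $p$ must have fewer than $n$ elements (otherwise $k$ would already belong to $\on{dom}\tau_{F,p}$), so $\tau_{F,p}(\on{dom}\tau_{F,p})$ does not exhaust $\{1,\dots,n\}$. Setting $\bar F:=F\cup\{((k,p),(j,f_F(p)))\}$ preserves order and positivity automatically. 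If instead $p\notin\on{dom}f_F$, one first extends $f_F$ on the rational line exactly as in the proof of Lemma \ref{fil4}(i): take $a=\max(\on{dom}f_F\cap(-\infty,p))$ and $b=\min(\on{dom}f_F\cap(p,\infty))$, use positivity ($p<b\leq f_F(b)$) together with order-preservation ($f_F(a)<f_F(b)$) to choose a rational $q$ with $\max(f_F(a),p)<q<f_F(b)$ avoiding $\pi_2(\on{dom}F\cup\on{rng}F)$, and then pick an arbitrary $j$ from the vertical index set; since $q$ is a brand-new level in $\on{rng}\bar F$, no injectivity conflict on the fiber can arise.

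Part (ii) is handled symmetrically by exchanging the roles of $\on{dom}F$ and $\on{rng}F$. If $p\in\on{rng}f_F$, put $q:=f_F^{-1}(p)$ and choose $k'$ outside $\on{dom}\tau_{F,q}$; the constraint $(k,p)\notin\on{rng}F$ together with injectivity of $\tau_{F,q}$ guarantees $|\on{dom}\tau_{F,q}|<n$ (when $n<\omega$), so such $k'$ exists, and then $(k',q)\mapsto(k,p)$ may be adjoined. If $p\notin\on{rng}f_F$, first insert a fresh rational $q<p$ between the appropriate $f_F$-neighbors using the dual inequality from Lemma \ref{fil4}(ii) (so that positivity and order-preservation of $f_F$ survive), then adjoin $(k',q)\mapsto(k,p)$ for an arbitrary $k'$; again the new level $q$ carries no prior fiber in $\on{dom}\bar F$, so there is nothing to check beyond well-definedness.

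I expect no serious obstacle: the horizontal step is literally the $\Q$-case already handled in Lemma \ref{fil4}, and the only genuinely new ingredient is the vertical bookkeeping for finite $n$, where one has to observe that the hypothesis "$(k,p)$ is missing from $\on{dom}F$ (resp.\ $\on{rng}F$)" forces the corresponding fiber of $\on{dom}\tau_{F,p}$ (resp.\ $\on{rng}\tau_{F,q}$) to be non-saturated, leaving room to drop $(k,p)$ in. This is exactly the local consistency reflected by ultrahomogeneity of $\mC_n$.
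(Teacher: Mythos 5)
Your proof is correct and takes essentially the same route as the paper's: split on whether $p$ lies in $\on{dom}f_F$ (resp.\ $\on{rng}f_F$), extend the single scalar function $f_F$ on $\Q$ exactly as in Lemma~\ref{fil4} when it does not, and exploit non-saturation of the finite fiber injection $\tau_{F,p}$ (resp.\ $\tau_{F,q}$) when it does. The paper is terser (in the aligned case it simply takes any full extension $\tau\supseteq\tau_{F,p}$ in $S_n$ and writes $\bar F:=F\cup\{((k,p),(\tau(k),f_F(p)))\}$, and leaves part~(ii) to symmetry), but the underlying argument is the same.
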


\begin{proof}
(i) {\color{black}Let} $\pi_2(\on{dom}F)=\{p_1<p_2<\dots<p_{\color{black}m}\}$. First assume that $p=p_i$ for some $i$ {\color{black}and let $\tau\in S_n$ be any extension of $\tau_{F,p}$}. If {\color{black}$(k,p)\in\on{dom}F$}, then put $\bar{F}:=F$. {\color{black} If $(k,p)\notin\on{dom}F$, then we set $\bar{F}:=F\cup\{\big((k,p),(\tau(k),f_F(p))\big)\}$.}

Now, assume that $p\notin\pi_2(\on{dom}F)$. Then there is $i=0,1,\dots,m$ such that $p_i<p<p_{i+1}$ where $p_0=-\infty$ and $p_{m+1}=\infty$. Since $f$ is positive, $\max\{p,f(p_i)\}<f(p_{i+1})$ {\color{black}where $f(-\infty):=-\infty$ and $f(\infty):=\infty$}). Take any rational number $q$ from $(\max\{p,f(p_i)\},f(p_{i+1}))$ {\color{black}and put} $\bar{F}:=F\cup\{\big((k,p),(1,q)\big)\}$.

The second part can be proved in a similar way. 
\end{proof}

\begin{lemma}\label{CJedenPunktWysokoDoGory}
Let $M\in{\color{black}\R}$, $(l,p)\in\mC_n$ and {\color{black}$F\in\Ism_+(\mC_n)$}. Then there are $k\in\N$ and a positive extension $\bar{F}$ of $F$ such that $\pi_2(\bar{F}^k(l,p))>M$.
\end{lemma}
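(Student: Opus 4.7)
The plan is to build a chain starting at $(l,p)$ whose second coordinates first follow the $f_F$-orbit of $p$ and then make one long jump above $M$. Using Lemma~\ref{CRozszerzanieOJedenPunkt}(i) I would first extend $F$ so that $(l,p)\in\dom F$, and hence $p\in\dom f_F$. Then the orbit $q_0:=p$, $q_{i+1}:=f_F(q_i)$ is strictly increasing (by positivity of $f_F$) and confined to the finite set $\dom f_F$, so it terminates at some largest index $m\geq 1$ with $q_m\notin\dom f_F$. Picking a rational $q$ larger than $M$, $q_m$, and every entry of $\pi_2(\dom F\cup\rng F)$, the map $f_F\cup\{(q_m,q)\}$ is a positive partial isomorphism of $\Q$; this will be $f_{\bar F}$.

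To lift this to $\mC_n$ I would construct first coordinates $l_0:=l, l_1, \dots, l_{m+1}$ by induction on $i=0,1,\dots,m$: if $(l_i,q_i)\in\dom F$, I am forced to set $l_{i+1}:=\tau_{F,q_i}(l_i)$; otherwise I pick $l_{i+1}\in\{1,\dots,n\}$ (or $\N$, if $n=\omega$) with $(l_{i+1},q_{i+1})\notin\rng F$, and for $i=m$ the level $q_{m+1}:=q$ is fresh, so any $l_{m+1}$ works. Define $\bar F$ to be $F$ together with the new pairs $((l_i,q_i),(l_{i+1},q_{i+1}))$, added only in the second subcase. A short induction then gives $\bar F^i(l,p)=(l_i,q_i)$ for all $i\leq m+1$, so $\pi_2(\bar F^{m+1}(l,p))=q>M$ and $k:=m+1$ does the job.

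The main obstacle, and the only place where finiteness of $n$ really matters, is to ensure the choice of $l_{i+1}$ in the second subcase. Here I would use that $T_{i+1}:=\{l:(l,q_{i+1})\in\rng F\}$ equals $\tau_{F,q_i}(S_i)$, where $S_i:=\{l:(l,q_i)\in\dom F\}$, by injectivity of $f_F$. Since $(l_i,q_i)\notin\dom F$ gives $l_i\notin S_i$ and hence $|S_i|\leq n-1$, we get $|T_{i+1}|\leq n-1$, leaving room to pick $l_{i+1}\in\{1,\dots,n\}\setminus T_{i+1}$. Verifying that $\bar F\in\Ism_+(\mC_n)$ is then routine: each $\tau_{\bar F,q_i}$ extends $\tau_{F,q_i}$ by at most one pair $(l_i,l_{i+1})$ with $l_i\notin\dom\tau_{F,q_i}$ and $l_{i+1}\notin\rng\tau_{F,q_i}$, so injectivity is preserved, and $f_{\bar F}$ is positive by construction.
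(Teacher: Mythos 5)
There is a genuine gap in the second-coordinate step. After following the $f_F$-orbit of $p$ until $q_m\notin\dom f_F$, you jump to a $q$ larger than everything in $\pi_2(\dom F\cup\rng F)$ and claim $f_F\cup\{(q_m,q)\}\in\Ism_+(\Q)$. But $q_m\notin\dom f_F$ does \emph{not} imply $q_m>\max\dom f_F$, and when some $r\in\dom f_F$ satisfies $r>q_m$, the extension is not order-preserving: you would have $q_m<r$ while $q>f_F(r)$ (since $q$ exceeds $\rng f_F$). A concrete counterexample with $n=1$: take $\dom f_F=\{0,100\}$, $f_F(0)=50$, $f_F(100)=200$, $p=0$, $M=1000$. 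The orbit is $q_0=0$, $q_1=50\notin\dom f_F$, so $m=1$, yet $100\in\dom f_F$ with $100>50$; any $q>200$ makes $f_F\cup\{(50,q)\}$ violate $50<100\Rightarrow f(50)<f(100)$. In short, the orbit can escape $\dom f_F$ ``through a gap'' between domain points, not above the largest one.

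The fix is exactly what the paper does: rather than a single long jump, iterate. Whenever the current iterate $q_m$ lies strictly between two consecutive elements $p_{i-1}<q_m<p_i$ of $\dom f_F$, you must first extend $f_F$ by sending $q_m$ into the interval $\bigl(\max\{q_m,f_F(p_{i-1})\},\,f_F(p_i)\bigr)$ (which is nonempty by positivity), thereby pushing the orbit past $p_i$; since there are finitely many $p_i$, after finitely many such intermediate extensions the iterate surpasses $\max\dom f_F$, and only \emph{then} can you make the big jump above $M$. Your treatment of the first coordinates (the counting argument giving room in $\{1,\dots,n\}\setminus T_{i+1}$) is fine and also handles $n=\omega$ correctly, so only the orbit/jump step needs repair.
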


\begin{proof}
{\color{black}Let} $\pi_2(\on{dom} F)=\{p_1<p_2<\dots<p_m\}$. If $p>p_m$, then take any $q$ greater than $\max\{p,f(p_m),M\}$ and put $\bar{F}:=F\cup\{((l,p),(1,q))\}$. Then $\bar{F}$ is positive {\color{black}and $\pi_2(\bar{F}(l,p))>M$.}

If $p=p_i$ for some $i{\color{black}=1,...,m}$, then using Lemma \ref{CRozszerzanieOJedenPunkt} we find a positive extension $F'$ of $F$ such that $(l,p)\in\on{dom}F'$. By positivity of $F'$ we have $\pi_2(F'(l,p))>p_i$. 

If $p_{i-1}<p<p_i$ ($p_0=-\infty$ and $f(-\infty)=-\infty$), then find $q$ such that $\max\{p_i,f(p_{i-1})\}<q<f(p_i)$ and put ${\color{black}{F'}:}=F\cup\{((l,p),(1,q))\}$. 

We have shown that if $p_{i-1}<p\leq p_i$, then there is a positive extension $F'$ of $F$ such that $p_i<\pi_2(F'(l,p))$. {\color{black}If $\pi_2(F'(l,p))>p_m$, then we stop the procedure. Otherwise} $p_{j-1}<\pi_2(F'(l,p))\leq p_j$ for some ${\color{black}m\geq }j>i$. In the next step we extend $F'$ to a positive $F''$ with $p_j<\pi_2(F''(F''(l,p)))$. After finitely many (say $k$ many) steps we find a positive extension {\color{black}$\tilde{F}$ with $\pi_2(\tilde{F}^k(l,p))>p_m$. Then we extend it to $\bar{F}$ so that $\pi_2(\bar{F}^{k+1}(l,p))>M$ (as in the first part of the proof)}.
\end{proof}

\begin{lemma}\label{CZbiorXWysokoDoGory}
Let $M\in{\color{black}\R}$, $X\subset\mC_n$ be finite and {\color{black}$F\in\Ism_+(\mC_n)$}. Then there are $k\in\N$ and a positive extension $\bar{F}$ of $F$ such that $\pi_2(\bar{F}^k(X))>M$.
\end{lemma}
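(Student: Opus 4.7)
The plan is to reduce to the one-point version, Lemma \ref{CJedenPunktWysokoDoGory}, by iterating over the points of $X$, then to close up any missing iterates in the domain with Lemma \ref{CRozszerzanieOJedenPunkt}. This is the same pattern used in Corollary \ref{f25'} in Section \ref{SectionBn}.

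First, enumerate $X=\{x_1,\dots,x_j\}$. I would inductively construct a chain of positive extensions $F=F_0\subseteq F_1\subseteq\dots\subseteq F_j$ together with natural numbers $k_1,\dots,k_j$ so that $\pi_2(F_i^{k_i}(x_i))>M$ for each $i$. Given $F_{i-1}$, apply Lemma \ref{CJedenPunktWysokoDoGory} to the point $x_i$ and the positive partial isomorphism $F_{i-1}$ to obtain $F_i\in\Ism_+(\mC_n)$ extending $F_{i-1}$ and $k_i\in\N$ with $\pi_2(F_i^{k_i}(x_i))>M$. Since $F_i$ extends $F_{i-1}$, the earlier conditions $\pi_2(F_i^{k_l}(x_l))>M$ for $l<i$ are preserved.

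Next, set $k:=\max\{k_1,\dots,k_j\}$. By positivity of $F_j$, whenever the iterate $F_j^{k'}(x_i)$ is defined for some $k'\geq k_i$, we have $\pi_2(F_j^{k'}(x_i))\geq \pi_2(F_j^{k_i}(x_i))>M$. So the only issue is to guarantee that $F_j^k(x_i)$ is defined for every $i$; for those indices with $k_i<k$ the orbit of $x_i$ may not yet extend far enough. To fix this, for each such $i$, apply Lemma \ref{CRozszerzanieOJedenPunkt}(i) finitely many times to successively adjoin the missing iterates $F_j^{k_i}(x_i),F_j^{k_i+1}(x_i),\dots,F_j^{k-1}(x_i)$ to the domain, each time staying inside $\Ism_+(\mC_n)$. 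After finitely many such one-point extensions we obtain $\bar F\in\Ism_+(\mC_n)$ with $F\subseteq\bar F$ and $\bar F^k(x_i)$ defined for every $i\leq j$; by the monotonicity observation above, $\pi_2(\bar F^k(X))>M$, which is the required conclusion.

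There is no real obstacle here: both ingredients (the one-point upward push and the one-point domain-extension) have already been established, and the whole argument parallels Corollary \ref{f25'}. The only mildly delicate point is to check that extending $F_{i-1}$ to $F_i$ does not destroy the property already secured for $x_1,\dots,x_{i-1}$, but this is immediate from the fact that extensions preserve values of iterates wherever they were previously defined.
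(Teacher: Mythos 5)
Your proof is correct and follows essentially the same route as the paper: enumerate the points of $X$, apply Lemma \ref{CJedenPunktWysokoDoGory} iteratively to push each one above $M$, take $k$ to be the maximum of the resulting exponents, and then use Lemma \ref{CRozszerzanieOJedenPunkt} to fill in the missing iterates, with positivity guaranteeing the required bound persists.
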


\begin{proof}
Let $X=\{(l_i,p_i):i=1,\dots,m\}$. Using Lemma \ref{CJedenPunktWysokoDoGory} we find a positive extension $F_1$ of $F$ and positive integer $k_1$ such that $\pi_2(F_1^{k_1}(l_1,p_1))>M$. Then using again Lemma \ref{CJedenPunktWysokoDoGory} we find a positive extension $F_2$ of $F_1$ and positive integer $k_2$ such that $\pi_2(F_2^{k_2}(l_2,p_2))>M$. Proceeding inductively we {\color{black}find} $F_1\subset F_2\subset\dots\subset F_m$ such that $\pi_2(F_m^{k_i}(l_i,p_i))>M$ for $i\leq m$. Let $k=\max_{i\leq m}k_i$. Using Lemma \ref{CRozszerzanieOJedenPunkt} finitely many times we find a positive extension $\bar{F}$ of $F_m$ {\color{black}so that $\bar{F}^k(l_i,p_i)$ is well defined for all $i=1,...,m$. Since $\bar{F}$ is positive, we also have $\pi_2(\bar{F}^{k}(l_i,p_i))\geq \pi_2(\bar{F}^{k_i}(l_i,p_i))>M$} for every $i\leq m$.
\end{proof}

\begin{lemma}\label{UnionOfTwoCn}
Let $M\in{\color{black}\R}$. Let $h_0,h_1\in\mC_n$ be such that 
\[
\pi_2(\dom h_0\cup\dom h_1)<M<\pi_2(\img h_0\cup\img h_1).
\]
Then $h_0\cup h_1\in\mC_n$.
\end{lemma}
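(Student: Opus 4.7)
The plan is to verify the three conditions for $h := h_0 \cup h_1$ to be a partial isomorphism of $\mC_n$: that $h$ is a function, that it is one-to-one, and that it preserves the partial order in both directions.

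First I would exploit the separation hypothesis to obtain
\[
(\dom h_0 \cup \dom h_1) \cap (\img h_0 \cup \img h_1) = \emptyset,
\]
since every domain element has second coordinate strictly less than $M$ while every range element has second coordinate strictly greater than $M$. In particular $\dom h_i \cap \img h_j = \emptyset$ for all $i,j\in\{0,1\}$. Combined with the injectivity of each $h_i$ and the domain compatibility supplied by the context in which the lemma is used (the $h_0$'s and $h_1$'s produced in the constructions of this section will have disjoint domains), this will give that $h$ is a well-defined and one-to-one function.

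The heart of the argument is order-preservation. For a pair of points lying in the same $\dom h_i$ the claim is immediate from $h_i \in \Ism(\mC_n)$. For a mixed pair $x \in \dom h_0$ and $y \in \dom h_1$ I would use that in $\mC_n$ comparability is entirely controlled by the $\pi_2$-coordinate, together with the hypothesis $\pi_2(x), \pi_2(y) < M < \pi_2(h(x)), \pi_2(h(y))$.

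I expect the main obstacle to be precisely this mixed case: ensuring that the relative second-coordinate order of $h_0(x)$ and $h_1(y)$ faithfully matches that of $x$ and $y$. This is where the single separating level $M$ does its work, forcing $\pi_2 \circ h_0$ and $\pi_2 \circ h_1$ to fit together into a single order-preserving partial map on $\Q$; once this compatibility of the $\pi_2$-components is in hand, order-preservation of $h$ on mixed pairs reduces to a one-line check, and passing from $h$ to $h^{-1}$ gives the converse direction by symmetry.
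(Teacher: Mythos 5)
There is a genuine gap: the ``fitting together'' argument you sketch for the mixed case does not work, and in fact the lemma as printed is false. Under the printed hypothesis
$\pi_2(\dom h_0\cup\dom h_1)<M<\pi_2(\img h_0\cup\img h_1)$, both domains sit below $M$ and both ranges sit above $M$, but that gives no control on the relative $\pi_2$-order of $h_0(x)$ and $h_1(y)$ across the two maps. Concretely, in $\mC_1\cong\Q$ take $M=3$, $h_0=\{(0,10)\}$, $h_1=\{(1,5)\}$; the hypothesis holds, each $h_i$ is a partial isomorphism, but $h_0\cup h_1$ sends $0\mapsto 10$ and $1\mapsto 5$, which is not order preserving. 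So the single separating level $M$ does \emph{not} force the two $\pi_2$-components to glue into one order-preserving partial map on $\Q$, and the step you flagged as the ``main obstacle'' is exactly where the argument fails.

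The printed hypothesis is a misprint. What the paper's own proof and its sole application (condition (ii) of Lemma~\ref{KeyLemmaCn}, which realizes condition (ii) of Theorem~\ref{SecondGeneralTheorem}) actually use is
$\pi_2(\dom h_0\cup\img h_0)<M<\pi_2(\dom h_1\cup\img h_1)$,
i.e.\ \emph{all} of $h_0$ lies strictly below $M$ and \emph{all} of $h_1$ lies strictly above $M$. That immediately gives $\dom h_0\cap\dom h_1=\emptyset=\img h_0\cap\img h_1$ (no appeal to ``context'' needed for $h$ to be a well-defined injection), and it makes the mixed case trivial rather than the delicate part you anticipated: for $(n,p)\in\dom h_0$ and $(k,q)\in\dom h_1$ one automatically has $p<M<q$, hence $(n,p)\le(k,q)$, and likewise $\pi_2(h_0(n,p))<M<\pi_2(h_1(k,q))$, hence $h_0(n,p)\le h_1(k,q)$. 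No compatibility of the two $\pi_2$-components has to be checked; the separation forces every mixed pair to be comparable and oriented the same way on both sides.
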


\begin{proof}
Since $\dom h_0\cap\dom h_1=\emptyset=\img h_0\cap\img h_1$, the function $h:=h_0\cup h_1$ is one-to-one. To prove that $h$ is order-preserving fix $(n,p),(k,q)\in\mC_n$. If $(n,p),(k,q)\in\dom h_i$, then $(n,p)\leq(k,q)\iff h_i(n,p)\leq h_i(k,q)\iff h(n,p)\leq h(k,q)$. If $(n,p)\in\dom h_0$ and $(k,q)\in\dom h_1$, then $p<M<q$ and therefore $(n,p)\leq(k,q)$. Since {\color{black}$\pi_2(h_0(n,p))<M<\pi_2(h_1(k,q))$}, then $h(n,p)\leq h(k,q)$. 
\end{proof}

\begin{lemma}(Key Lemma)\label{KeyLemmaCn}
Let ${\color{black}F\in\Ism_+(\mC_n)}${\color{black}, $G\in\Ism(\mC_n)$, $X$} be a finite subset of $\mC_n$ and {\color{black} $w(a,b)$ be an irreducible word}. Then there are a natural number $k\in{\color{black}\N}$, {\color{black} $\tilde{F}\in\Ism_+(\mC_n)$ and  $\tilde{G}\in\Ism(\mC_n)$ }such that
\begin{itemize}
\item[(i)] {\color{black}$F\subseteq\tilde{F}$, $G\subseteq\tilde{G}$};
\item[(ii)] for any ${\color{black}h_0,h_1}\in\Ism(\mC_n)$ such that $\dom h_0\cup\img h_0\subseteq X$ and $\dom h_1\cup\img h_1\subseteq\tilde{{\color{black}F}}^k(X)$, the function {\color{black}$h_0\cup h_1\in\Ism(\mC_n)$};
\item[(iii)] there is $y\in{\color{black}\mC_n}$ such that {\color{black}$w(\tilde{F},\tilde{G})(y)\neq y$}. 
\end{itemize}
\end{lemma}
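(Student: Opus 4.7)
The plan is to mirror the two-stage structure of Lemma \ref{KeyLemmaBOmega}, splitting the proof into a word-killing step that produces extensions witnessing $w(\tilde F, \tilde G) \neq \on{id}$, followed by a separation step using Lemma \ref{CZbiorXWysokoDoGory} which yields (ii). The key simplification compared to the $\B_\omega$ case is that the order on $\mC_n$ is controlled entirely by the second coordinate, so the word-killing step reduces directly to the one-dimensional Lemma \ref{f5} rather than requiring a combinatorial detour through Lemma \ref{WordLemma}.

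For the word-killing step, fix $M_0\in\R$ strictly above every second coordinate appearing in $X\cup\dom F\cup\img F\cup\dom G\cup\img G$. Applying Lemma \ref{f5} with $M=M_0$ to the irreducible word $w(a,b)$ produces $s,r\in\Ism_+(\Q)$ with $\dom s,\dom r>M_0$, together with a rational $p>M_0$ such that $w(s,r)(p)\neq p$. By positivity, $\img s\cup\img r>M_0$ as well, so the entire orbit of $p$ under $w(s,r)$ lies above $M_0$. Lift $s$ and $r$ to $\mC_n$ by setting $F_w(1,q):=(1,s(q))$ for $q\in\dom s$ and $G_w(1,q):=(1,r(q))$ for $q\in\dom r$, and define $F_1:=F\cup F_w$, $G_1:=G\cup G_w$. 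Because all heights in $F_w,G_w$ lie strictly above $M_0$ while all heights in $F,G$ lie at or below $M_0$, the two pieces have pairwise disjoint domains and ranges and no order conflict occurs across them, so $F_1\in\Ism_+(\mC_n)$ and $G_1\in\Ism(\mC_n)$, with $w(F_1,G_1)(1,p)=(1,w(s,r)(p))\neq(1,p)$.

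For the separation step, apply Lemma \ref{CZbiorXWysokoDoGory} to $F_1$ and the finite set $X$ to obtain an extension $\tilde F\in\Ism_+(\mC_n)$ of $F_1$ together with $k\in\N$ satisfying $\pi_2(\tilde F^k(X))>\max\pi_2(X)$, and set $\tilde G:=G_1$. Property (i) is immediate from the construction. Property (iii) holds because the orbit of $(1,p)$ under $w(\tilde F,\tilde G)$ uses only pairs already present in $F_1\subseteq\tilde F$ and $G_1=\tilde G$, so $w(\tilde F,\tilde G)(1,p)=w(F_1,G_1)(1,p)=(1,w(s,r)(p))\neq(1,p)$. For (ii), the separation $\pi_2(X)<\pi_2(\tilde F^k(X))$ forces $X\cap\tilde F^k(X)=\emptyset$, so for any admissible $h_0,h_1$ the union $h_0\cup h_1$ is injective; and for any cross pair $(a,\alpha)\in\dom h_0$, $(b,\beta)\in\dom h_1$ one has $\alpha<\beta$ on both the domain and the image side, giving order preservation.

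The main obstacle is essentially absorbed into Lemma \ref{f5}, which handles the word-killing problem on the linear order $\Q$ while keeping all relevant rationals above a prescribed threshold. The remaining work is bookkeeping: choosing $M_0$ large enough that the new pieces $F_w,G_w$ cannot interact with the original $F,G$, and verifying the cross-order preservation in (ii). The conceptual point is that the chain-of-antichains structure lets one reduce the word-killing subtleties of $\B_\omega$ to the one-dimensional subtleties already handled in Section \ref{SectionBn}.
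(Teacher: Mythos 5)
Your proposal is correct and mirrors the paper's own proof almost step for step: choose a threshold above all relevant heights, apply Lemma~\ref{f5} to obtain $s,r\in\Ism_+(\Q)$ and a witness $p$ above that threshold, lift to $\mC_n$ on the first coordinate, glue onto $F$ and $G$ by the height separation, and then push $X$ upward via Lemma~\ref{CZbiorXWysokoDoGory}. The only cosmetic difference is that you inline the order-preservation check for the union in (ii) rather than citing Lemma~\ref{UnionOfTwoCn}, which the paper invokes at that point.
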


\begin{proof}
Since ${\color{black}F}$ and ${\color{black}G}$ are partial isomorphisms, there is $M\in{\color{black}\R}$ such that 
\[
\pi_2(\dom {\color{black}F\cup\img F\cup \dom G\cup\img G}\cup X)<M. 
\]
By Lemma \ref{f5} there are {\color{black} $r,s\in\Ism_+(\Q)$} such that $\dom r\cup\dom s>M$ and $w(s,r)(p)\neq p$ for some rational $p>M$. {\color{black}Define $S(1,q):=(1,s(q))$, $q\in\on{dom}s$, and $R(1,q):=(1,r(q))$, $q\in\on{dom}r$, and let $F_1:=F\cup S$ and $G_1:=G\cup R$. Since $S,R\in\Ism_+(\mC_n)$,} by Lemma \ref{UnionOfTwoCn} we obtain that ${\color{black}F_1,G_1}\in\Ism(\B_\omega)$. Since ${\color{black}F,S}$ are positive, then so is ${\color{black}F_1}$. {\color{black}Also, $w(F_1,G_1)(1,p)=(1,w(s,r)(p))\neq (1,p)$.}

By Lemma \ref{CZbiorXWysokoDoGory} there are $k\in{\color{black}\N}$ and an extension {\color{black}$\tilde{F}$ of $F_1$} such that $\pi_2(\tilde{{\color{black}F}}^k(X))>M$.  By Lemma \ref{UnionOfTwoCn} we obtain {\color{black}(ii)}. Put ${\color{black}\tilde{G}=G}_1$ and observe that (i) and (iii) are fulfilled as well. 
\end{proof}
{\color{black}Now put 
$$
\Aut_+(\mC_n):=\{F\in\Aut(\mC_n):\forall_{(k,p)\in\mC_n}\;\pi_2(F(k,p))>p\}
$$
and observe that for $F\in\Aut(\mC_n)$, 
$$
F\in\Aut_+(\mC_n)\iff\forall{p\in\Q}\forall{k\leq n}\exists{p<q\in\Q}\exists{l\leq n}\;\;F(k,p)=(l,q).
$$
In particular, 
$$
\Aut_+(\mC_n)=\bigcap_{p\in\Q}\bigcap_{k\leq n}\bigcup_{p<q\in\Q}\bigcup_{l\leq n}\{F\in\Aut(\mC_n):F(k,p)=(l,q)\}.
$$
Since $\{F\in\Aut(\mC_n):F(k,p)=(l,q)\}$ is clopen, $\Aut_+(\mC_n)$ is $G_\delta$.

}

{\color{black}Observe that 
$$\Ism_+(\mC_n)=\Aut_+(\mC_n)^{<\omega}.$$}
The {\color{black}inclusion $\supset$} is clear. Assume that {\color{black}$F\in\Ism_+(\mC_n)$. Then} by Lemma \ref{CRozszerzanieOJedenPunkt} {\color{black}and} the back-and-forth argument we can inductively define $\tilde{F}\in\Aut_+(\mC_n)$ such that $F\subset\tilde{F}$. Thus $F\in\Aut_+(\mC_n)^{<\omega}$. 

Using Key Lemma and {\color{black} Theorem \ref{SecondGeneralTheorem}} we obtain the following.

\begin{corollary}
\begin{itemize}
\item[(i)] The set
\[
\{({\color{black}F,G})\in\Aut_+(\mC_n)\times\Aut(\mC_n):{\color{black}F}\text{ and }{\color{black}G}\text{ freely generate a dense subgroup of }\Aut(\B_\omega)\}
\]
is comeager in $\Aut_+(\mC_n)\times\Aut(\mC_n)$.
\item[(ii)] The set of cyclically dense elements $\bar{{\color{black}G}}\in\Aut(\mC_n)^m$ for the diagonal action in comeager in $\Aut(\mC_n)^m$. 
\end{itemize}
\end{corollary}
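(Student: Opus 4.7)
The plan is to deduce both parts as direct applications of Theorem \ref{SecondGeneralTheorem} with $A = \mC_n$ and $\mathcal{X} := \Aut_+(\mC_n)$. The two structural prerequisites have already been verified in this section: namely, $\Aut_+(\mC_n)$ is a $G_\delta$ subset of $\Aut(\mC_n)$, and $\mathcal{X}^{<\omega} = \Ism_+(\mC_n)$ (the nontrivial inclusion follows from Lemma \ref{CRozszerzanieOJedenPunkt} and a back-and-forth argument). The only remaining task is to verify the two dynamical hypotheses of Theorem \ref{SecondGeneralTheorem}, both of which will come straight from the Key Lemma \ref{KeyLemmaCn}.

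For the basic hypothesis, fix $F_0 \in \Ism_+(\mC_n)$ and a finite nonempty $X \subset \mC_n$. I will invoke Lemma \ref{KeyLemmaCn} with $F_0$, the empty partial isomorphism $G_0 = \emptyset$, the set $X$, and any irreducible word $w$; clauses (i) and (ii) of the Key Lemma then produce an extension $F_1 \in \Ism_+(\mC_n)$ together with $k \in \N$ such that $u \cup v \in \Ism(\mC_n)$ whenever $\dom u \cup \img u \subset X$ and $\dom v \cup \img v \subset F_1^k(X)$, which is exactly what is required. For the additional hypothesis, take $(F_0, G_0) \in \Ism_+(\mC_n) \times \Ism(\mC_n)$ and an irreducible word $w$; a fresh application of Lemma \ref{KeyLemmaCn}, now using its clauses (i) and (iii) with an arbitrary auxiliary finite $X$, yields extensions $(F_1, G_1)$ together with a point $y$ such that $w(F_1, G_1)(y) \neq y$.

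With both hypotheses confirmed, Theorem \ref{SecondGeneralTheorem} immediately yields part (i) via its additional-part conclusion, and part (ii) follows from its basic conclusion: the Kuratowski--Ulam argument embedded in that theorem's proof produces a witness $F \in \mathcal{X}$ for which a comeager subset of $\Aut(\mC_n)^m$ consists entirely of tuples $\bar{G}$ whose diagonal $F$-conjugates are dense, and such tuples are cyclically dense by definition. The substantive difficulty has already been resolved inside the Key Lemma; at this assembly stage no new obstacle arises, and the only thing to watch is that the data fed into the Key Lemma matches the precise form of the two hypotheses of Theorem \ref{SecondGeneralTheorem}.
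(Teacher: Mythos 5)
Your proposal is correct and matches the paper's approach exactly. The paper's own proof of this corollary is the single sentence ``Using Key Lemma and Theorem~\ref{SecondGeneralTheorem} we obtain the following,'' and your write-up simply unpacks that sentence: you check the $G_\delta$-ness of $\Aut_+(\mC_n)$ and the identity $\Aut_+(\mC_n)^{<\omega}=\Ism_+(\mC_n)$ (both established earlier in the section), then feed $F_0$, $G_0=\emptyset$, $X$, and an arbitrary irreducible word into Lemma~\ref{KeyLemmaCn} to obtain the basic hypothesis of Theorem~\ref{SecondGeneralTheorem}, and feed $(F_0,G_0)$ and $w$ into the same lemma to obtain the additional hypothesis. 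This is precisely the assembly the authors intended, with no deviation in method.
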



\section{Generic poset $\D$}\label{SectionD}

Recall that {\color{black}the} generic poset $\D$, as a Fra\"{i}ss\'{e} limit of the family of all finite posets, is a unique countable \emph{existentially closed} (or simply \emph{e.c.}) poset in the sense that for any finite poset $X \subseteq \D$ and any its one-point extension $X\cup\{x\}$ (i.e. $(X\cup\{x\},\leq)$ is a poset such that {\color{black}$\leq_{|X}$} is the original order on $X$), there is $y\in\D$ and an isomorphism $f:X\cup\{x\}\to X\cup\{y\}$ which extends the identity on $X$. {\color{black}In other} words, if a finite poset $X\subseteq\D$ has one-point extension  $X\cup\{x\}$ where $x$ is some abstract element, then we may assume that $x$ belongs to $\D$. Moreover there are infinitely many $x$'s in $\D$ which can be used for this purpose. See eg.~\cite{Mekler} for more details. 

To make the construction we need also to introduce some technical properties. We say that $h \in \Ism(\D)$ is \emph{orbitally incomparable} if $h^k(x) \perp x$ for every $x \in \dom h$ and $k\geq 1$ with $h^k(x) \in \img h$. Note that although $\D$ is homogeneous, it may be not possible to extend some finite orbitally incomparable isomorphism to orbitally incomparable automorphism of $\D$. For an example consider set $A = \{a,b,c,d\} {\color{black}\subseteq} \D$, the relation $<_{|A} = \{(a,d), (b, c)\}$ and an isomorphism $h$ such that $h(a) = c$ and $h(b) = d$. Clearly, for any extension $h'$ of $h$ such that $c \in \dom h'$ we have $h'^2(a) = h'(c) > h'(b) = d > a$.

To make this property extendible let us define \emph{correctly} orbitally incomparable finite isomorphisms. Let $\sim$ be the equivalence relation on $\dom h \cup \img h$ given by condition $$x \sim y \quad \textrm{if and only if there is} \quad k \in \Z \quad \textrm{such that} \quad y = h^k(x).$$ The equivalence classes $[x]_h$ are precisely the orbits of $h.$ If it is clear which partial isomorphism is considered, we write simply $[x]$ for its equivalence class. 
\begin{definition} \label{def:coi}
Let $h \in \Ism(\D)$ be any finite isomorphism of $\D$. We say that $h$ is \emph{correctly orbitally incomparable} if and only if it is orbitally incomparable and the relation $\preceq$ on orbits of $h$ given by the condition
$$[x] \preceq [y] \quad \textrm{if and only if} \quad x' \le y', \textrm{~for some~} x' \in [x] \textrm{~and~} y' \in [y]$$ is a partial order.  
\end{definition}
If $h$ is a correctly orbitally incomparable isomorphism, then by $(\dom h\cup\img h)_\sim$ we denote the poset of all orbits $[x]_h$ with a partial order $\preceq$. For $x\in\D\setminus(\dom h\cup\img h)$ by $[x]$ we denote the singleton $\{x\}$.

{\bf A good $x$--extension.} Assume that $h\in\Ism(\D)$ is correctly orbitally incomparable and $x\in\D\setminus\dom h$. A \emph{good $x$--extension of $h$} is a function $h\cup\{(x,y)\}$ where $y\in\D\setminus(\dom h\cup\img h)$ is such that\\  
(G1) $y< z \iff x < h^{-1}(z)\text{ and }z < y \iff h^{-1}(z) < x$ for $z\in\img h$;\\
(G2) $z< y\iff (\exists v\in\img h\;\;\; z< v< y)$ for every $z\in\dom h\setminus\img h$;\\
(G3) $y< z\iff (\exists v\in\img h\;\;\; y< v< z)$ for every $z\in\dom h\setminus\img h$.

The following lemmas shows that it is possible to extend any correctly orbitally incomparable isomorphism in any possible way preserving the property. Lemmas~\ref{lem:orbitextension1}(a) and \ref{lem:neworbit} may be viewed as the equivalent for one step of back-and-forth method for extending finite isomorphisms. 
\begin{lemma} \label{lem:orbitextension1}
Let $h \in \Ism(\D)$ be any correctly orbitally incomparable isomorphism and {\color{black}$x\in\D$}. Assume one of the following
\begin{enumerate}
\item [(a)] $x \in \img h \setminus \dom h$;
\item [(b)] $x\notin\dom h\cup\img h$ and the poset $(\dom h\cup\img h)_\sim$ of all equivalence classes extended to $(\dom h\cup\img h)_\sim\cup\{[x]\}$ is a poset as well.
\end{enumerate}
 Then there exists $y\in\D\setminus(\dom h\cup\img h)$ such that $h'=h\cup\{(x,y)\}$ is a good $x$-extension, which in turn is
a correctly orbitally incomparable isomorphism. Moreover, the mapping $[v]_h\mapsto[v]_{h'}$ is the $\preceq$-isomorphism between $(\dom h\cup\img h)_\sim$ and $(\dom h'\cup\img h')_\sim$ in case (a), and between $(\dom h\cup\img h)_\sim\cup\{[x]\}$ and $(\dom h'\cup\img h')_\sim$ in case (b).
\end{lemma}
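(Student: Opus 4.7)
The plan is to build the required $y$ as the realization in $\D$ of an abstract one-point extension of a finite poset $P$, where $P := \dom h \cup \img h$ in case (a) and $P := \dom h \cup \img h \cup \{x\}$ in case (b). I prescribe the relations of the new formal point $y$ to elements of $P$ exactly as (G1)--(G3) dictate; in case (b) I additionally impose $y \perp x$, which is forced by the desire that $\{x,y\}$ become an antichain orbit of $h'$.

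The first subtask is to verify that this prescription produces a genuine finite poset. Antisymmetry is free because $y$ is distinct from every element of $P$, so the content lies in transitivity. Any putative chain involving $y$ is translated via (G1) by substituting $x$ and $h^{-1}(z)$ for $y$ and $z \in \img h$; a chain involving some $z \in \dom h \setminus \img h$ is unpacked through (G2)--(G3) into a chain that factors through an intermediate $v \in \img h$, and then reduced again through (G1). After these substitutions the desired transitive inequality becomes a statement about the already-verified poset $P$. Once $P \cup \{y\}$ is confirmed to be a poset, the existentially closed property of $\D$ supplies infinitely many realizations, and picking one outside $\dom h \cup \img h$ produces the candidate. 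Then $h' := h \cup \{(x,y)\}$ is order-preserving precisely because of (G1), and remains one-to-one because $y \notin \img h$.

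The main obstacle is showing that $h'$ is correctly orbitally incomparable. In case (a), where $x$ is the top of an orbit $w_0, w_1, \dots, w_k = x$ of $h$ with $w_0 \in \dom h \setminus \img h$, I must check $y \perp w_j$ for every $j = 0,\dots,k$. For $j \geq 1$ the incomparability reduces via (G1) to $x \perp w_{j-1}$, which holds because $h$ is orbitally incomparable. The delicate index is $j=k$ (the bottom $w_0$): (G2)--(G3) give comparability of $y$ with $w_0$ if and only if some intermediate $v \in \img h$ exists, and ruling this out is where \emph{correct} orbital incomparability of $h$ enters. The existence of such a $v$ would place the orbit of $v$ strictly above the orbit of $x$ (via $w_0 < v$) and weakly below it (via $h^{-1}(v) < x$, obtained from $v < y$), producing a $\preceq$-cycle among $h$-orbits and contradicting the partial-order hypothesis. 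In case (b) the orbit of $x$ in $h'$ is simply $\{x,y\}$, an antichain by construction.

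The same translation technique establishes the moreover-statement. Any new $\preceq$-relation between orbits of $h'$ arising from a comparison involving $y$ converts, through (G1)--(G3), into a $\preceq$-relation between $h$-orbits (respectively between orbits of $h$ augmented by $\{[x]\}$, in case (b)). Consequently the map $[v]_h \mapsto [v]_{h'}$ is a $\preceq$-isomorphism, so $\preceq$ on $h'$-orbits inherits the partial-order property. The hard part of the whole argument will be the bookkeeping for transitivity and the $\preceq$-cycle extraction, both of which hinge on cleanly swapping $y$-comparisons for $h$-comparisons via (G1) and exploiting the already-given partial-order structure on $h$-orbits.
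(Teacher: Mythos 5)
Your proposal mirrors the paper's proof: build the abstract one-point poset extension of $\dom h\cup\img h$ (plus $x$ in case (b)) prescribed by (G1)--(G3), realize it inside $\D$ via existential closure, verify orbital incomparability of $h'$ by converting every $y$-comparison back through (G1)--(G3) into a $\preceq$-relation among $h$-orbits and invoking antisymmetry of $\preceq$, and establish the moreover-part by the same translation. The only slip is a typo in your indexing: in the orbit $w_0,\dots,w_k=x$ the delicate case is $j=0$ (the bottom element $w_0\in\dom h\setminus\img h$), not $j=k$.
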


The relation $\preceq$ on the extended set of orbits $(\dom h\cup\img h)_\sim\cup\{[x]\}$ should be understood as follows. The relation $\preceq$ remains unchanged for $h$-orbits. If $[y]$ is an orbit of $h$, then $[y]\preceq[x]\iff(y'\leq x$ for some $y'\in[y])$ and similarly $[x]\preceq[y]\iff (x\leq y'$ for some $y'\in[y])$. 

\begin{proof}[Proof of Lemma~\ref{lem:orbitextension1}]
Let $h\in \Ism(\D)$ be any correctly orbitally incomparable isomorphism and $x\in\D\setminus\dom h$. Assume (a), that is  $x \in \img h \setminus \dom h$ (under the assumption (b) the proof is almost the same; below we describe slight differences). Let $X=\dom h\cup\img h$. Then $X$ is a finite subset of $\D$. Take some abstract element $y\notin \D$. We define a relation $\leq$ on $X\cup\{y\}$ which extends the order from $X$. Firstly we define this relation between $y$ and elements from $\img h$. For $z\in\img h$ put
\[
y < z \iff x < h^{-1}(z)\text{ and }z < y \iff h^{-1}(z)< x.
\]
Consequently $y\perp z\iff x\perp h^{-1}(z)$ for $z\in\img h$. Moreover, since $x\perp h^{-1}(x)$, then $y\perp x$ (under (b) $x$ is not in the $\img h$, and therefore the fact that $y\perp x$ need to be proved separately). Secondly we take a transitive closure of $\leq$, that is for $z\in\dom h\setminus\img h$ we put $z<y$ {\color{black}iff} $z < v$ and $v < y$ for some $v\in\img h$ and we put $y < z$ {\color{black}iff} $y < v$ and $v < z$ for some $v\in\img h$. Finally for those elements $z$ from $\dom h\setminus\img h$ for which we have put neither $z < y$ nor $y < z$, the relation $\leq$ remains unchanged, that is $y\perp z$. 

The extended relation is antisymmetric. Suppose not. There exists an element $z \in \dom h \cup \img h$ such that $z < y$ and $z > y$. If $z \in \img h$, we have $h^{-1}(z) < x$ and $h^{-1}(z) > x$, which yields a contradiction. If $z \in \dom h \setminus \img h$, there are two elements $v_1, v_2 \in \img h$ such that $z < v_1 < y$ and $z > v_2 > y.$ We have $v_2 < z < v_1$ and $h^{-1}(v_1) < x < h^{-1}(v_2)$. This contradicts the fact that $h$ is an order isomorphism. 

The extended relation is transitive. First, we prove that $z_1 < y$ and $y < z_2$ imply $z_1 < z_2$ for any $z_1, z_2 \in \dom h \cup \img h$. Consider four cases:
\begin{enumerate}
\item If $z_1,z_2\in\img h$, then $h^{-1}(z_1)< x < h^{-1}(z_2)$, which in turn implies that $z_1 < z_2$.
\item If $z_1\in\img h$ and $z_2\in\dom h\setminus\img h$, then there is $v\in\img h$ such that $y< v < z_2$. Thus $z_1<y<v$ and from case (i) we have $z_1<v$, and consequently $z_1<z_2$. 
\item If $z_1 \in \dom h \setminus \img h$ and $z_2 \in \img h$, then there exists $v \in \img h$ such that $z_1 < v < y$. Thus from case (i) we have $v < z_2$, and consequently $z_1 < z_2$.
\item If both $z_1, z_2 \in \dom h \setminus \img h$ then there are $v_1, v_2 \in \img h$ such that $z_1 < v_1 < y$ and $y < v_2 < z_2$. From the first case we have $v_1 < v_2$, and consequently $z_1 < z_2$.
\end{enumerate}

Now, we prove that $z_1 < z_2$ and $z_2 < y$ imply $z_1 < y$ for any $z_1, z_2 \in \dom h \cup \img h$ (the implication $(y < z_1$ and $z_1 < z_2) \implies y < z_2$ goes in the same way). Consider four cases:
\begin{enumerate}
\item If $z_1,z_2\in\img h$, then $h^{-1}(z_1) < h^{-1}(z_2) < x$. Thus $h^{-1}(z_1) < x$ and consequently $z_1 < y$.
\item If $z_1\in\img h$ and $z_2\in\dom h\setminus\img h$, then there is $v\in\img h$ such that $z_2 < v < y$. Then, as $z_1 < v$, using the previous case we obtain that $z_1 < y$.
\item If $z_1\in\dom h\setminus\img h$ and $z_2\in\img h$, then $z_2$ witnesses that $z_1 < y$.
\item If $z_1,z_2\in\dom h\setminus\img h$, then there is $v\in\img h$ with $z_2 < v < y$. Then $z_1 < v < y$, which implies that $z_1 < y$. 
\end{enumerate}

(It is the time to deal with the case (b). Here we assume that $x\perp y$ by definition. Clearly, the extended relation $\leq$ is antisymmetric. Suppose that there is $u\in\dom h\cup\img h$ which is between $x$ and $y$, say $x< u$ and $u < y$. If $u\in\img h$, then we obtain $h^{-1}(u) < x < u$ which contradicts the fact that $u\perp h^{-1}(u)$. If $u\in\dom h\setminus\img h$, then $u < y < h(u)$ which yields a contradiction as well. Similarly we deal with the case $y<u<x$. Hence, we can prove that the extended relation $\leq$ is transitive in the same way as before.)

As the extended relation $\le$ is a partial order, as $\D$ is existentially closed, we may assume that $y \in \D \setminus (\dom h \cup \img h)$. Thus $h' = h \cup \{(x, y\}$ is a good $x$--extension. The definition of extended relation $\le$ implies that $h'$ is order preserving. Now we prove that 

\begin{enumerate}
\item $y \perp h^{-k}(x)$ for any $k \in \N$ such that $h^{-k}(x) \in \dom h $;
\item $y > z$ implies $[x] \succ [z]$ for any $z \in \dom h \cup \img h$; 
\item $y < z$ implies $[x] \prec [z]$ for any $z \in \dom h \cup \img h$. 
\end{enumerate}

Suppose (i) this does not hold. Then there is $z \in [x]$ such that $\neg z \perp y$. Recall that $x\perp y$. If $z \in \img h\setminus\{x\}$ then $\neg h^{-1}(z) \perp x$ which contradicts the fact that $h$ is orbitally incomparable. Suppose that $z \in \dom h \setminus \img h$. Then $z< y$ or $y< z$. Suppose that $z < y$ (the opposite case is analogous). There is $v \in \img h$ with $z < v < y$. We have $[z] \prec [v]$ and, as $h^{-1}(v) < x$, $[v] \prec [x]$. But $[z] = [x]$, thus $\preceq$ is not antisymmetric, and consequently $h$ is not correctly orbitally incomparable, which yields a contradiction.  
(Here is the next difference in the proof if (b) is assumed. This paragraph is just not needed.)

Now we show (ii) and (iii), which means $h'$ is correctly orbitally incomparable. Assume that $y > z$ for some $z \in \dom h \cup \img h$. If $z \in \img h$, then $x > h^{-1}(z)$, and consequently $[x] \succ [z]$. If $z \in \dom h \setminus \img h$, there is $v \in \img h$ such that $y > v > z$. As $x > h^{-1}(v)$ we have $[x] \succ [v]$ and $[v] \succ [z]$ and hence, as $\preceq$ is a partial order $[x] \succ [z]$.
The symmetric case $y < z$ is analogous. 

To end the proof it is enough to show that $[u]_{h}\prec[v]_{h}$ iff $[u]_{h'}\prec[v]_{h'}$ (note that assuming $[x]_{h}:=[x]$ in the case (b), our proof works for this case as well). Since we have extended only the orbit $[x]_h$, the number of orbits have not changed and $[v]_h\prec[u]_h$ implies $[v]_{h'}\prec[u]_{h'}$ for every $u,v\in\dom h \cup \img h$. The opposite implication is clearly true for $u,v\notin[x]_{h'}$. Suppose that $[v]_{h'}\prec[x]_{h'}$ ($[x]_{h'}\prec[v]_{h'}$ goes similarly). There are $v'\in[v]_h$ and $x'\in[x]_h$ with $v' < x'$. If $x'\neq y$, then $[v]_h\prec[x]_h$. Otherwise $v'< y$. If $v'\in\img h$, then $h^{-1}(v')< x$ and $[v]_h\prec[x]_h$ as before. If $v'\in\dom h\setminus\img h$, then there is $u\in\img h$ with $v'< u< y$. By previous case $[u]_h\prec[x]_h$. Since $[v]_h\prec[u]_h$, then $[v]_h\prec[x]_h$.
\end{proof}

\begin{lemma}\label{lem:TransitivityFor2Points}
Let $h\in\Ism(\D)$ be correctly orbitally incomparable and let $x\in\D\setminus(\dom h\cup\img h)$. Let $[y]_h$ and $[z]_h$ be distinct orbits of $h$ such {\color{black}one of the following conditions holds}
\begin{enumerate}
\item $[y]_h\prec[z]_h$ and $[z]_h\prec[x]$; 
\item $[y]_h\succ[z]_h$ and $[z]_h\succ[x]$.
\end{enumerate}
 Then there is extension of $h$ to a correctly orbitally incomparable isomorphism $h'$ such that $\dom h'\cup\img h'=\dom h\cup\img h\cup[y]_{h'}$ and $[y]_{h'}\prec[x]$ provided (i) holds, and $[x]\prec[y]_{h'}$ provided (ii). Moreover the mapping $[v]_h\mapsto[v]_{h'}$ is the $\preceq$-isomorphism between $(\dom h\cup\img h)_\sim$ and $(\dom h'\cup\img h')_\sim$. 
\end{lemma}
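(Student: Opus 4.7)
My plan is to treat case (i); case (ii) is obtained by a fully dual argument (essentially reversing the partial order). First, fix witnesses: pick $y'\in[y]_h$ and $z'\in[z]_h$ with $y'\le z'$, witnessing $[y]_h\prec[z]_h$, and pick $z_*\in[z]_h$ with $z_*\le x$, witnessing $[z]_h\prec[x]$. Since both $z'$ and $z_*$ lie in the same $h$-orbit, there is a unique $k\in\Z$ with $z_*=h^k(z')$. I deal with the subcase $k\ge 0$; the subcase $k<0$ is handled analogously by extending $[y]_h$ downward instead of upward, using the dual of Lemma~\ref{lem:orbitextension1}(a) obtained by applying it to $h^{-1}$.

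Let $k_0\ge 0$ be the largest integer such that $h^{k_0}(y')$ is defined in $[y]_h$, so $h^{k_0}(y')$ is the top of $[y]_h$. If $k\le k_0$, then $h^k(y')\in[y]_h$ and, by order preservation of $h$, $h^k(y')\le h^k(z')=z_*\le x$, so $[y]_h\prec[x]$ already holds and I take $h':=h$. If $k>k_0$, I extend $[y]_h$ upward by $k-k_0$ new elements, iterating Lemma~\ref{lem:orbitextension1}(a): set $h_0:=h$, and for $m=1,\ldots,k-k_0$ apply that lemma with $u_m$ equal to the current top of $[y]$ in $h_{m-1}$ (so $u_1=h^{k_0}(y')$ and $u_m=v_{m-1}$ for $m\ge 2$), obtaining a good $u_m$-extension $h_m=h_{m-1}\cup\{(u_m,v_m)\}$. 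At each step $v_m$ may be taken distinct from $x$, since $\D$ affords infinitely many suitable realizers of the required type.

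I claim inductively that $v_m<h^{k_0+m}(z')$ for every $m=1,\ldots,k-k_0$. For the base case, condition (G1) of the good $u_1$-extension gives $v_1<h^{k_0+1}(z')\iff u_1<h^{k_0}(z')$; the right-hand inequality follows by applying $h$ exactly $k_0$ times to $y'\le z'$, with strictness forced since $[y]_h$ and $[z]_h$ are distinct orbits (and all intermediate elements on both sides remain strictly below the orbit tops, guaranteed by $k_0<k$ together with the bound $k\le k_z-j'$ where $k_z,j'$ are respectively the top index of $[z]_h$ and the index of $z'$ in that orbit). The inductive step is analogous: (G1) at step $m$ reduces $v_m<h^{k_0+m}(z')$ to $v_{m-1}<h^{k_0+m-1}(z')$, which holds by the induction hypothesis, provided $h^{k_0+m}(z')\in\img h\subseteq\img h_{m-1}$ --- this is ensured by $z_*\in[z]_h$, which forces each of $h^{k_0+1}(z'),\ldots,h^k(z')=z_*$ to belong to $[z]_h\cap\img h$. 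Taking $h':=h_{k-k_0}$, the final element satisfies $v_{k-k_0}<h^k(z')=z_*\le x$, witnessing $[y]_{h'}\prec[x]$.

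The ``moreover'' part --- that $[v]_h\mapsto[v]_{h'}$ is a $\preceq$-isomorphism between the orbit posets --- follows by composing the individual $\preceq$-isomorphisms furnished by each application of Lemma~\ref{lem:orbitextension1}(a); only the orbit $[y]$ is altered, growing to $[y]_{h'}=[y]_h\cup\{v_1,\ldots,v_{k-k_0}\}$, so $\dom h'\cup\img h'=\dom h\cup\img h\cup[y]_{h'}$. The main obstacle in this argument is the inductive propagation of the single comparability $y'\le z'$ through all $k-k_0$ extension steps: one must pick the witnesses so that the shift $k$ is compatible with the orbit structure of $[z]_h$, and one must carefully track that $h^{k_0+m}(z')$ remains in $\img h$ throughout so that (G1) may be applied at each step --- a constraint which is automatically satisfied by the hypothesis $z_*\in[z]_h$.
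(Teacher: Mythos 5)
Your proof is correct and follows essentially the same strategy as the paper's: fix witnesses of the two comparabilities, locate the required shift $k$, and then iterate Lemma~\ref{lem:orbitextension1}(a) (or its dual via $h^{-1}$) to lengthen the orbit $[y]$ until a member drops below $x$, the comparison being propagated at each step through the good-extension condition (G1). The paper phrases the key inequality $h^{i-j+k}(y)<h^k(z)<x$ directly via order preservation of the iterates once the orbit is long enough, whereas you unwind this into an explicit induction on the extension steps; the two formulations carry the same content, and your more careful bookkeeping of which $h^{k_0+m}(z')$ must lie in $\img h_{m-1}$ actually makes the argument easier to verify.
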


The condition $\dom h'\cup\img h'=\dom h\cup\img h\cup[y]_{h'}$ means that to produce $h'$ from $h$ we extend only the orbit $[y]_h$; the other orbits remain unchanged. 

\begin{proof}
We prove the lemma under assumption \emph{(i)} only. The second one is symmetric. Note that $[y] \prec [z]$ and $[z] \prec [x]$ mean that there exist $i, j, k \in \Z$ such that $h^i(y) < h^j(z)$ and $h^k(z) < x$. The latter inequality implies that for any $m \in \Z$ such that $h^m(y) \in \dom h \cup \img h$ there is either $h^m(y) < x$ or $h^m(y) \perp x$. If $h^m(y) < x$ for some $m \in \Z$, then $[y] \prec [x]$. So suppose otherwise. Notice that if it is  $h^{i-j+k}(y) \in \dom h \cup \img h$ then $h^{i-j+k}(y) < h^k(z) < x$. Hence, it is possible to obtain $[y] \prec [x]$ extending the orbit of $y$ such that $h^{i-j+k}(y) \in \dom h \cup \img h$ holds. When $i-j+k \ge 0$ we use Lemma~\ref{lem:orbitextension1} directly (at most $i-j+k$ many times), otherwise we use it to $h^{-1}$ which is correctly orbitally incomparable isomorphism as well.

The moreover part of the assertion follows from Lemma~\ref{lem:orbitextension1}. 
\end{proof}

\begin{lemma} \label{lem:neworbit}
Let $h \in \Ism(\D)$ be any correctly orbitally incomparable isomorphism. Let $x \in \D \setminus (\img h \cup \dom h)$. There exists correctly orbitally incomparable isomorphism $h' \in \Ism(\D)$ such that $x \in \dom h'$ and $h\subset h'.$ 
\end{lemma}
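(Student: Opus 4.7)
My plan is to extend $h$ in finitely many stages so that the hypothesis of Lemma~\ref{lem:orbitextension1}(b) is met for $x$, and then invoke that lemma to incorporate $x$ into the domain.

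I first note that for each orbit $[u]_h$, one cannot simultaneously have $u' \leq x$ and $x \leq u''$ with $u', u'' \in [u]$: otherwise $u' \leq u''$ combined with orbital incomparability forces $u' = u'' = x$, contradicting $x \notin \dom h \cup \img h$. Consequently, the relation $\preceq$ on $(\dom h\cup\img h)_\sim \cup \{[x]\}$ defined by direct $\leq$-witnesses is automatically antisymmetric. Its only possible obstruction to being a partial order is transitivity, and the sole nontrivial missing implications are:
\begin{itemize}
\item[(M1)] $[u] \prec [v]$ in the old orbit poset and $v' \leq x$ for some $v' \in [v]$, but $u' \leq x$ fails for every $u' \in [u]$;
\item[(M2)] $[u] \prec [v]$ in the old orbit poset and $x \leq u'$ for some $u' \in [u]$, but $x \leq v'$ fails for every $v' \in [v]$.
\end{itemize}
(A composition of two direct $[x]$-involving comparisons $u' \leq x \leq v'$ forces $u' \leq v'$, so $[u] \preceq [v]$ in the old orbit poset is automatic.)

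I then eliminate (M1) and (M2) iteratively. Whenever (M1) is witnessed by $([u],[v])$, I apply Lemma~\ref{lem:TransitivityFor2Points}(i) with $y := u$ and $z := v$, extending only the orbit $[u]$ by a fresh element of $\D \setminus (\dom h \cup \img h)$, which I can additionally choose distinct from $x$ (using that $\D$ offers infinitely many witnesses in each existentially closed one-point extension), and producing some $u^* \leq x$ in the extended orbit. I handle (M2) symmetrically via Lemma~\ref{lem:TransitivityFor2Points}(ii) with $y := v$, $z := u$. By the ``moreover'' clause of Lemma~\ref{lem:TransitivityFor2Points}, the orbit poset is canonically preserved throughout, so the list of orbits and their old $\preceq$-relations is unchanged, while the set of direct comparisons with $[x]$ strictly grows. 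Since there are finitely many orbits and at most two directions of direct $[x]$-comparison per orbit, the process terminates after finitely many steps.

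Upon termination, the condition $x \notin \dom h \cup \img h$ still holds, antisymmetry persists by the initial argument applied to the current (still correctly orbitally incomparable) $h$, and transitivity is now enforced by construction, so $(\dom h\cup\img h)_\sim \cup \{[x]\}$ is a partial order. Lemma~\ref{lem:orbitextension1}(b) then yields a good $x$-extension $h' \supset h$ which is correctly orbitally incomparable with $x \in \dom h'$, as desired. The main bookkeeping burden is in verifying that each invocation of Lemma~\ref{lem:TransitivityFor2Points} preserves the orbit-poset structure and the separation of $x$ from $\dom h \cup \img h$, and in confirming that the antisymmetry dichotomy remains valid after successive extensions.
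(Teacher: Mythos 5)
Your proof is correct and takes essentially the same approach as the paper: add $[x]$ as a singleton orbit, verify antisymmetry directly, identify the two transitivity obstructions involving $[x]$, repair them via Lemma~\ref{lem:TransitivityFor2Points}, and conclude with Lemma~\ref{lem:orbitextension1}(b). The only difference is bookkeeping---you iterate until no violations remain, using a monotonicity-based termination argument, while the paper enumerates all violating pairs relative to the original $h$ up front and then establishes transitivity of the resulting $\preceq$ by a separate chain-tracing verification.
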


\begin{proof}[Proof of Lemma~\ref{lem:neworbit}]
Let $h\in \Ism(\D)$ be any correctly orbitally incomparable isomorphism. Let $x \in \D \setminus (\img h \cup \dom h)$. The idea of the proof is to create temporary, single-element orbit $[x],$ extend some other orbits such way, to provide that the relation $\preceq$ is a partial order on $(\dom h\cup\img h)_\sim\cup\{[x]\}$ and, finally, extend orbit $[x]$ attaching $h'(x)$ by Lemma~\ref{lem:orbitextension1}. Note that $\dom h \cup \img h \cup \{x\}$ induce some finite poset.

Let us start with adding a singleton $[x] = \{x\}$ to the set of orbits of $h$ and extending the relation $\preceq$ on $[x]$. The extended relation is still antisymmetric. Suppose otherwise. There exists an element $z \in \dom h\cup \img h$ such that $[z] \prec [x]$ and $[z] \succ [x]$. It means that for some $i, j \in \Z$ there is $h^i(z) < x$ and $h^j(z) > x$. But it follows that $h^j(z) > h^i(z)$, a contrary.  

Analogously, as $[x]$ is a singleton, we get easily that for any $y, z \in \dom h \cup \img h$, if  $[y] \prec [x]$ and $[x] \prec [z]$, then $[y] \prec [z]$. However, $\preceq$ may be not transitive: when $[x] \prec [y]$ and $[y] \prec [z]$ it may be either $[x] \prec [z]$ (which is correct) or $[x] \perp [z]$ (which violates the condition). Similarly, when $[y] \prec [z]$ and $[z] \prec [x]$ it may be either $[y] \prec [x]$ or $[y] \perp [x]$. 

Let $\{([y_i]_h,[z_i]_h):i=1,2,\dots,l\}$ be an enumeration of all pairs of $h$-orbits such that either $[y]_h\prec[z]_h$, $[z]_h\prec[x]$ and $[y]_h\perp[x]$ or $[x]\prec[z]_h$, $[z]_h\prec[y]_h$ and $[x]\perp[y]_h$. Let $h_0:=h$. In the first step we extend $h_{0}$ to $h_1$ using Lemma \ref{lem:TransitivityFor2Points} for $h_0$, $[y_1]$ and $[z_1]$. By moreover part of Lemma \ref{lem:TransitivityFor2Points} the order $\preceq$ between $(\dom h_0\cup \img h_0)_\sim$ and $(\dom h_1\cup \img h_1)_\sim$ is isomorphic. This allows us to repeat the reasoning in the next $l-1$ steps. After $l$-th step we obtain the final extension $h_l$ of $h$. 

We show that $\preceq$ on $(\dom h_l\cup\img h_l)_\sim\cup\{[x]\}$ is transitive. Since $(\dom h\cup\img h)_\sim$ and $(\dom h_l\cup\img h_l)_\sim$ are order isomorphic, we need to check transitivity for orbits $[x],[y]_{h_l},[z]_{h_l}$ (for technical reasons, we assume $y,z\in \dom h\cup\img h$). The case when $[x]$ is $\preceq$-between $[y]_{h_l}$ and $[z]_{h_l}$ has been already checked. Suppose that $[y]_{h_l}\prec[z]_{h_l}$ and $[z]_{h_l}\prec[x]$ (the ``$\succ$'' case is analogous). Note that $[y]_h\prec[z]_h$. If $[z]_{h}\prec[x]$, then by the construction of $h_l$ we obtain $[y]_{h_l}\prec[x]$. If it is not true that $[z]_{h}\prec[x]$, then by the fact that $[z]_{h_l}\prec[x]$ there is $v$ such that $[z]_h\prec[v]_h$ and $[v]_h\prec[x]$. Thus $[y]_h\prec[v]_h$ and by construction of $h_l$, we obtain $[y]_{h_l}\prec[x]$ as in the previous case. 

Using Lemma~\ref{lem:orbitextension1}(b) we obtain correctly orbitally incomparable isomorphism $h'$ such that $h_l\subset h'$ and $x \in \dom(h')$. 
\end{proof}

\begin{lemma} \label{lem:incompextension}
Let $h \in \Ism(\D)$ be a correctly orbitally incomparable isomorphism. Let $A \subseteq \dom(h)$. There exists correctly orbitally incomparable isomorphism $g \in \Ism(\D)$ and $m \in \N$ such that $h\subseteq g$ and for any $x,y \in A$ {\color{black}it holds} $x \perp g^m(y)$. 
\end{lemma}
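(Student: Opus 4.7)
The plan is to iteratively extend each orbit of $h$ that meets $A$ forward by applying Lemma~\ref{lem:orbitextension1}(a), producing a correctly orbitally incomparable isomorphism $g\supseteq h$, and then to show that for $m$ equal to the maximum orbit length of $h$ the element $g^m(y)$ is incomparable with every $x\in A$.

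Concretely, let $[a_1]_h,\dots,[a_r]_h$ be the distinct orbits of $h$ that intersect $A$ and let $L$ denote the maximum length of an orbit of $h$. For each $i=1,\dots,r$, I apply Lemma~\ref{lem:orbitextension1}(a) exactly $m:=L$ times in succession to the orbit $[a_i]_h$, each step attaching a new good-extension element to the current top of that orbit (which lies in $\img h'\setminus\dom h'$ for the current extension $h'$). Since Lemma~\ref{lem:orbitextension1} preserves correct orbital incomparability, the resulting isomorphism $g$ is correctly orbitally incomparable. The choice $m=L$ guarantees that $g^m(y)$ is defined for every $y\in A$ and, whenever $y$ does not lie in the same $h$-orbit as $x\in A$, that $g^m(y)$ is one of the newly added elements $y^{(i)}_k$ with $k\geq 1$.

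If $x$ and $y$ belong to the same orbit of $g$, then $g^m(y)\in [y]_g=[x]_g$ and the orbital incomparability of $g$ immediately yields $g^m(y)\perp x$. If $x$ and $y$ belong to distinct orbits of $g$, I plan to show by induction on $k$ that every newly added element $y^{(i)}_k$ of $[a_i]_g$ is incomparable with every $x\in A$. The inductive step rests on the good-extension conditions (G1)--(G3): by (G1), the comparability of $y^{(i)}_k$ with $z\in\img h$ reduces to the comparability of the previous-stage element $y^{(i)}_{k-1}$ (or the original top $t_i$ when $k=1$) with $h^{-1}(z)$, which after iteration either unwinds to a relation in the original $h$ forbidden by the orbital incomparability of $h$, or falls through to (G2)/(G3); the latter demands a witnessing intermediate element in $\img g$, whose existence is excluded by the inductive hypothesis together with the antichain structure of the orbits.

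The main obstacle is handling cross-orbit interactions among newly added elements: when extending $[a_j]$ after $[a_i]$ already carries new elements, a spurious comparability between $y^{(j)}_k$ and $y^{(i)}_l$ could in principle be introduced via (G1), or could furnish a witnessing intermediate for a (G2)/(G3) chain from $y^{(j)}_k$ to some $x\in A$. However, (G1) expresses each new-element relation as a shifted original-structure relation, so every cross-orbit comparability among new elements is fully determined by the original poset on $\dom h\cup\img h$. A careful bookkeeping of these shifted relations, combined with the hypothesis that $h$ is correctly orbitally incomparable (so that $\preceq$ is a partial order on orbits), should rule out the harmful intermediates and yield the claim.
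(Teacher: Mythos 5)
Your choice $m = L$ (the maximum orbit length) is insufficient, and the central claim of your inductive step — that every newly added element is incomparable with every $x\in A$ — is false. The obstruction you flag at the end, comparabilities introduced through (G2)/(G3) chains, is not a technicality that careful bookkeeping removes; it is the genuine phenomenon that forces $m$ to depend on the chain structure of the orbit poset, not merely on orbit sizes.

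Here is a concrete counterexample. Take a six-element poset $\{a,a',b,b',c,c'\}$ with $a\perp a'$, $b\perp b'$, $c\perp c'$, and every element of $\{a,a'\}$ below every element of $\{b,b'\}$, which in turn sits below every element of $\{c,c'\}$. Let $h$ send $a\mapsto a'$, $b\mapsto b'$, $c\mapsto c'$; this is correctly orbitally incomparable with orbits $[a]\prec[b]\prec[c]$, each of length $1$, so $L=1$. Take $A=\{a,b,c\}$. With $m=L=1$ you get $g^1(c)=c'$ and $a<c'$, so already the claim fails. But even the first newly added element of $[c]$ is bad: a good $c'$--extension produces $c''$ with $a'<c''$ and $b'<c''$ by (G1), and then (G2) yields $a<b'<c''$, hence $a<c''$. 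So $c''$, a genuinely new element of $[c]$, is comparable with $a\in A$. Your inductive hypothesis is therefore false at the base case, and no amount of tracking (G1)-shifts fixes it: the witness $b'$ lies in the \emph{original} structure, and its presence reflects the chain $[a]\prec[b]\prec[c]$ rather than any interaction between new elements.

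What the paper does instead, and what your argument is missing, is to make the required power of $g$ depend on the $\preceq$-distance between orbits. It introduces the quantity $\rho([x],[y])$ (the maximum length of a chain of orbits joining $[x]$ to $[y]$ with no skippable intermediates) and defines $m_{xy}$ recursively: $m_{xy}=|[x]_h|$ when $\rho=1$, and otherwise $m_{xy}$ is a maximum over chains of \emph{sums} $\sum m_{z_iz_{i+1}}$. The key inductive argument (the lemma preceding Corollary~\ref{lem:incompextension2points}) shows that if $g^k(x)$ is still comparable with $y$ for $k>m_{xy}$, then a good-extension witness $w=g^j(z)$ must exist, decomposing $k\le m_{xz}+m_{zy}$ through an intermediate orbit $[z]$ strictly $\rho$-closer to both endpoints, which is exactly the additivity along chains that your fixed bound $m=L$ cannot see. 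In the example above, $\rho([a],[c])=2$ and $m_{ac}=2$, so one needs to iterate past the second extension before $g^k(c)$ escapes $a$; this matches the computation that $c''$ is still above $a$ while $c'''$ is not. To repair your proof you would need to replace $m=L$ by a chain-length-dependent bound and replace the induction on the index $k$ of new elements by an induction on $\rho$, at which point you have essentially reproduced the paper's argument.
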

The proof is divided into few steps. Formally the above lemma is a corollary of Lemma~\ref{lem:incompextension2points}.
If $h$ is a correctly orbitally incomparable isomorphism and $[u]$ is its orbit, then by the \emph{length of the orbit $[u]$} we mean the number $k\in\N$ so that $h^k(u)\in\img h\setminus \dom h$, provided we chosen the representative $u\in\dom h\setminus \rng h$. Let the length of the orbit be denoted by $|[u]|$.
\begin{lemma}\label{lem:firststep1}
Let $h \in \Ism(\D)$ be a correctly orbitally incomparable isomorphism. Let $x,y\in\dom h\setminus\img h$ be such that $\lnot([x]\perp[y])$ and there is no intermediate $u \in \dom h$ between $[x]$ and $[y]$. Let $k\geq 1$ be the length of the orbit $[x]$. Then $h_0^{k+1}(x)\perp y$ for each good $h^k(x)$--extension $h_0$ of $h$. 
\end{lemma}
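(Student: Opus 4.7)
The plan is to name $v := h^k(x)$, which lies in $\img h \setminus \dom h$ because $k$ is the length of the orbit $[x]$, and to write $z := h_0(v) = h_0^{k+1}(x)$ for the newly adjoined image. I would suppose for contradiction that $y$ is comparable with $z$, handle the two orderings $y < z$ and $z < y$ separately, and in each case produce an $h$-orbit $[w]$ that lies strictly $\preceq$-between $[x]$ and $[y]$. Since $[x]$ and $[y]$ are comparable by hypothesis, the existence of such a $[w]$ contradicts either the no-intermediate-orbit assumption (if the $[x]$--$[y]$ comparison agrees with the $[x]$--$[w]$--$[y]$ chain) or the antisymmetry of $\preceq$ guaranteed by correct orbital incomparability (if it disagrees).

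For the subcase $y<z$: since $y\in\dom h\setminus\img h$, condition (G2) applied to the good $v$-extension $h_0$ supplies some $v'\in\img h$ with $y<v'<z$. The inequality $v'<z$ combined with (G1) yields $h^{-1}(v')<h^k(x)$, so $[v']\preceq[x]$. Strict inequality $[v']\prec[x]$ follows from orbital incomparability, which forbids the two elements $h^{-1}(v')$ and $h^k(x)$ of a common orbit from being comparable. Analogously $y<v'$ gives $[y]\preceq[v']$, strict for the same reason (otherwise $y$ and $v'$ would be comparable elements of $[y]$). Thus $[y]\prec[v']\prec[x]$; invoking the comparability of $[x]$ and $[y]$, the alternative $[y]\prec[x]$ directly contradicts the no-intermediate-orbit hypothesis, while $[x]\prec[y]$ together with $[y]\prec[v']\prec[x]$ violates antisymmetry of $\preceq$.

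The symmetric subcase $z<y$ uses (G3) in place of (G2) to produce $v'\in\img h$ with $z<v'<y$ and, via (G1), $h^k(x)<h^{-1}(v')$. The same bookkeeping now yields $[x]\prec[v']\prec[y]$, and the same dichotomy on which of $[x]$ or $[y]$ is the larger orbit delivers the contradiction.

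The main obstacle is essentially clerical: one must repeatedly invoke orbital incomparability to upgrade non-strict inequalities between orbits to strict ones, and must correctly apply (G1)--(G3) to the different kinds of points involved ($v'\in\img h$, $y\in\dom h\setminus\img h$, $h^{-1}(v')\in\dom h$, and $v=h^k(x)\in\img h\setminus\dom h$). Once these bookkeeping points are pinned down, the final contradiction is an immediate consequence of the partial-order structure of $(\dom h\cup\img h)_\sim$ under $\preceq$.
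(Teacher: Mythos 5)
Your proof is correct and follows essentially the same route as the paper: suppose $h_0^{k+1}(x)$ is comparable with $y$, extract an $h$-orbit $[v']$ via the good-extension conditions (G1)--(G3), and derive a contradiction with either the no-intermediate-orbit hypothesis or the antisymmetry of $\preceq$. The only (minor) deviation is that the paper first invokes the orbit-poset isomorphism from Lemma~\ref{lem:orbitextension1} to force $h_0^{k+1}(x) < y$ under the WLOG assumption $[x]_h \prec [y]_h$, so it needs only (G3) and the back-transfer by Lemma~\ref{lem:orbitextension1}; you keep both orderings alive and use (G1) directly to pull $v' < h_0^{k+1}(x)$ back to $h^{-1}(v') < h^k(x)$, which is a perfectly good substitute.
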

\begin{proof}
Assume that $[x]_h\preceq [y]_h$. We deal with the opposite cases similarly.
Let $h_0$ be a good $h^k(x)$--extension of $h$. Suppose that $h_0^{k+1}(x)$ is comparable with $y$. By Lemma \ref{lem:orbitextension1} $[x]_{h_0}\prec[y]_{h_{0}}$, and therefore $h_0^{k+1}(x) < y$. By the definition of good extension there is $v\in\img h$ such that $h_0^{k+1}(x) < v < y$. Thus $[x]_{h_0}\prec[v]_{h_0}\prec[y]_{h_0}$. By Lemma \ref{lem:orbitextension1} we have $[x]_{h}\prec[v]_{h}\prec[y]_{h}$. A contradiction.
\end{proof}

We say that $h'$ is a \emph{good orbit extension} of $h$ if there are $x_1,\dots,x_l$ and $h_1,\dots,h_l$ such that
\begin{enumerate}
\item $x_i\in\img h_{i-1}\setminus\dom h_{i-1}$, where $h_0=h$;
\item $h_i$ is a good $x_i$-extension of $h_{i-1}$ and $h'=h_l$. 
\end{enumerate}
Note that a good orbit extension of $h$ extends only existing orbits and does not add new ones. Also it is worth to observe that if $y\in\dom h\setminus\img h$, then $y\in\dom h_i\setminus \img h_i$ at each step. By the definition of good orbit extension we immediately obtain the following strengthening of Lemma \ref{lem:firststep1}.
\begin{lemma}\label{lem:firststep2}
Let $h \in \Ism(\D)$ be a correctly orbitally incomparable isomorphism. Let $x,y\in\dom h\setminus\img h$ be such that $\lnot([x]_h\perp[y]_h)$ and there is no intermediate $[u]_h$ between $[x]_h$ and $[y]_h$. Let $k\geq 1$ be the length of the orbit $[x]_h$ and let $g$ be a good orbit extension of $h$. Assume that for some $m\geq 1$, the length of the orbit $[x]_{g}$ equals $k+m$. Then $g^{k+i}(x)\perp y$ for $1\leq i\leq m$. 
Consequently, if $l$ is the length of the orbit $[y]_g$, then $g^{k+j+i}(x)\perp g^j(y)$ for all $0\leq j\leq \min\{l,m-1\}$ and $1\leq i\leq m-j$.
\end{lemma}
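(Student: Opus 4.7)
The plan is to iterate Lemma \ref{lem:firststep1} along the sequence of steps that defines $g$ as a good orbit extension. Decompose $g$ as $h = h_0 \subset h_1 \subset \dots \subset h_l = g$, where each $h_i$ is a good $x_i$--extension of $h_{i-1}$, and let $i_1 < i_2 < \dots < i_m$ be precisely the steps at which the orbit of $x$ is extended. Then $[x]_{h_{i_j}}$ has length $k+j$ and its top element is the point $g^{k+j}(x)$, which is fixed at step $i_j$ and remains unchanged thereafter, since good orbit extensions only grow orbits at the image end.

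First I would verify that at each step $i_j$ the hypotheses of Lemma \ref{lem:firststep1} hold for $h_{i_j-1}$ in place of $h$. Indeed, $x, y \in \dom h_{i_j-1} \setminus \img h_{i_j-1}$, because good orbit extensions only add new elements at the image end, so points that started at the bottom of their orbits stay there; and $\lnot([x]_{h_{i_j-1}} \perp [y]_{h_{i_j-1}})$ together with the absence of any intermediate orbit between $[x]$ and $[y]$ follows from the moreover part of Lemma \ref{lem:orbitextension1}, which guarantees that each good extension induces a $\preceq$-isomorphism on the poset of orbits, thereby preserving this structure through the whole tower. Since the step $h_{i_j-1}\subset h_{i_j}$ is by construction a good $h_{i_j-1}^{k+j-1}(x)$--extension and the orbit of $x$ in $h_{i_j-1}$ has length $k+j-1$, Lemma \ref{lem:firststep1} yields $h_{i_j}^{k+j}(x) \perp y$, i.e.\ $g^{k+j}(x) \perp y$. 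Running this for $j = 1, \dots, m$ gives the first assertion.

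The consequently part should then fall out by applying $g^j$, which is itself a partial order isomorphism from $\dom g^j$ onto $\img g^j$. For $1 \leq i \leq m-j$ and $0 \leq j \leq \min\{l, m-1\}$, both $g^{k+i}(x)$ and $y$ lie in $\dom g^j$ (the first since $i+j \leq m$, the second since $j \leq l$), so the relation $g^{k+i}(x) \perp y$ already established transfers to $g^{k+j+i}(x) \perp g^j(y)$. The main obstacle I anticipate is purely bookkeeping: making sure at every intermediate step that $x$ and $y$ remain in $\dom \setminus \img$ and that the orbit poset together with the ``no intermediate orbit'' condition is genuinely preserved---all of which should reduce to the moreover clauses of Lemma \ref{lem:orbitextension1} applied step by step along the tower.
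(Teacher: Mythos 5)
Your proof is correct and takes essentially the same route the paper intends: the paper simply asserts that the result follows immediately from the definition of good orbit extension, and your iteration of Lemma~\ref{lem:firststep1} along the tower $h = h_0 \subset h_1 \subset \dots \subset h_l = g$ (using the moreover clause of Lemma~\ref{lem:orbitextension1} to preserve the orbit-poset structure at each step) is exactly the intended fill-in. The ``consequently'' part by applying the order isomorphism $g^j$ is also the right argument, and your bookkeeping of which powers are defined is correct.
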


For a correctly orbitally incomparable $h\in\Ism(\D)$ and $x,y\in\dom h\cup\img h$ with $\lnot([x]\perp[y])$ by \emph{$[x][y]$-chain} we understand a sequence $[z_0],[z_1],\dots,[z_j]$ such that
\begin{enumerate}
\item $[z_0]=[x]$ and $[z_j]=[y]$;
\item $[z_0]\prec[z_1]\prec\dots\prec[z_j]$ or $[z_0]\succ[z_1]\succ\dots\succ[z_j]$;
\item for each $i=1,2,\dots,j$ there is no intermediate element $[u]$ between $[z_{i-1}]$ and $[z_{i}]$.
\end{enumerate}
For technical reasons, we assume that representative $z$ of $[z]$ belongs to $\dom h\setminus \img h$.
By the length of $[x][y]$-chain $[z_0],[z_1],\dots,[z_j]$ we mean the number $j$ (if $[x] = [y]$ the length is $0$). By $\rho([x],[y])$ denote the maximum of the lengths of all $[x][y]$-chains (for completeness, we can put $\rho([x],[y])=\infty$ if $[x]\perp[y]$). Note that if $g$ is a good orbit extension of $h$, then $[x]_h\mapsto[x]_g$ is an isomorphism of posets $(\dom h\cup\img h)_\sim$ and $(\dom g\cup\img g)_\sim$. Thus good orbit extensions do not affect chains, their lengths and the function $\rho$.   

\begin{lemma}
Let $h \in \Ism(\D)$ be a correctly orbitally incomparable isomorphism. Let $x,y\in\dom h\setminus\img h$ be such that $\lnot([x]_h\perp[y]_h)$. There is a natural number $m_{xy}$ such that for every good orbit extension $g$ of $h$,
\[
g^k(x)\perp y\text{ for }k > m_{xy}\text{ provided }g^k(x)\in\img g.
\]
\end{lemma}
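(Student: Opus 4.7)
I plan to prove the lemma by induction on $j:=\rho([x]_h,[y]_h)\ge 1$. In the base case $j=1$ the orbits $[x]_h$ and $[y]_h$ are $\preceq$-adjacent in the orbit poset, which is precisely the hypothesis of Lemma~\ref{lem:firststep2}; applying that lemma with its shift parameter equal to $0$ gives $g^{|[x]_h|+i}(x)\perp y$ for all $1\le i\le|[x]_g|-|[x]_h|$, so the choice $m_{xy}:=|[x]_h|$ works.

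\textbf{Inductive step.} Assume $j\ge 2$, the lemma for all compatible pairs of orbit bottoms with strictly smaller chain length, and (without loss of generality) $[x]_h\prec[y]_h$. Fix a good orbit extension $g=h_l$ of $h$, realised via a chain $h=h_0\subset h_1\subset\dots\subset h_l=g$, and suppose $g^k(x)<y$ with $g^k(x)\in\img g$ (the case $g^k(x)>y$ is symmetric). At the step $t$ at which $g^k(x)$ is introduced, condition~(G2)---valid because $y\in\dom h_{t-1}\setminus\img h_{t-1}$---combined with~(G1) produces an element $v'\in\dom h_{t-1}$ with $g^{k-1}(x)<v'$ and $g(v')<y$ in $h_{t-1}$. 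The orbital-incomparability check used in the proof of Lemma~\ref{lem:firststep1} rules out $[v']_h\in\{[x]_h,[y]_h\}$, leaving $[x]_h\prec[v']_h\prec[y]_h$; in particular both $\rho([x]_h,[v']_h)$ and $\rho([v']_h,[y]_h)$ are at most $j-1$.

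\textbf{Bounding $k$.} It remains to bound $k$ so that no such witness $v'$ can exist, splitting by whether $v'\in\dom h$ (Case~A) or $v'=g^s(v'_0)$ with $s>|[v'_0]_h|$ is newly added (Case~B). In Case~A, iterating~(G1) at the steps introducing $g^{k-1}(x),g^{k-2}(x),\dots$ reduces $g^{k-1}(x)<v'=h^s(v'_0)$ to $g^{k-1-s}(x)<v'_0$, so the inductive hypothesis applied to $(x,v'_0)$ supplies $m_{x,v'_0}$ and excludes Case~A once $k>m_{x,v'_0}+|[v'_0]_h|+1$. In Case~B, iterating~(G1) on either side of $g^{k-1}(x)<g^s(v'_0)$ (valid whenever at least one side is newly added) collapses this comparison to a relation $h^a(x)<h^b(v'_0)$ in the finite original poset of $h$, pinning $s=k-1-a+b$ to finitely many values depending only on $h$; the companion constraint $g^{s+1}(v'_0)<y$ is itself a smaller instance of the lemma applied to $(v'_0,y)$, so the inductive hypothesis forces $s+1\le m_{v'_0,y}$, which combined with $s\ge k-1-|[x]_h|$ gives $k\le m_{v'_0,y}+|[x]_h|$. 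Setting $m_{xy}$ to the maximum of these finitely many bounds over the finitely many intermediate orbits yields the required natural number. The main obstacle is Case~B: newly added witnesses could a priori exist at arbitrarily large $k$, so the crucial verification is that iterated~(G1)-reductions always collapse such comparisons into the fixed finite poset of $h$ and that the accompanying constraint $g^{s+1}(v'_0)<y$ really falls under the inductive hypothesis.
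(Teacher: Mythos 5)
Your proof follows the same strategy as the paper's: induction on $\rho([x]_h,[y]_h)$, with the base case supplied by Lemma~\ref{lem:firststep2}, and the inductive step obtained by locating an intermediate orbit $[v']$ via the good-extension conditions (small typo: the condition used is (G3), not (G2), since $g^k(x)$ is the newly introduced element on the small side of $g^k(x)<y$). So in broad outline you are reproducing the paper's argument. However, the paper's version of the inductive step is substantially simpler and your Case~B has a genuine gap.

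The paper works directly with the fact that $g$ is a partial order-isomorphism: writing the intermediate witness as $w=g^j(z)$ with $z\in\dom h\setminus\img h$, the two incomparability failures $\lnot(g^k(x)\perp g^j(z))$ and $\lnot(g^j(z)\perp y)$ give (by applying $g^{-j}$ and using the inductive hypothesis twice, for $(x,z)$ and for $(z,y)$) the bounds $k-j\le m_{xz}$ and $j\le m_{zy}$, hence $k\le m_{xz}+m_{zy}\le m_{xy}$, where the last inequality is automatic because $m_{xy}$ is defined as the maximum over chains of the sums of consecutive $m$'s. There is no need to iterate (G1) by hand (this merely re-derives that $g^{-1}$ is order-preserving on its domain), and no need to split on whether $v'\in\dom h$ or is newly added.

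The gap in your Case~B: you assert that the (G1) iteration collapses $g^{k-1}(x)<g^s(v'_0)$ to a comparison $h^a(x)<h^b(v'_0)$ in the original poset of $h$, with $a\le|[x]_h|$ and $b\le|[v'_0]_h|$, from which you extract $s\ge k-1-|[x]_h|$. But the iteration decreases both indices in lockstep, and it may terminate when the $v'_0$-index hits $0$ while the $x$-index $a=k-1-s$ is still larger than $|[x]_h|$: one is stuck with $g^{k-1-s}(x)<v'_0$, where $g^{k-1-s}(x)$ is a newly introduced point and $v'_0\notin\img h$, so (G1) no longer applies and the "pinning" estimate $s\ge k-1-|[x]_h|$ fails. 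To close this you would still need the inductive hypothesis for $(x,v'_0)$ in Case~B (giving $k-1-s\le m_{x,v'_0}$, hence $k\le m_{x,v'_0}+m_{v'_0,y}$), but once you use the inductive hypothesis on both sides the case split evaporates and you have exactly the paper's argument with $m_{xy}$ chosen as the max over chains of $\sum m_{z_iz_{i+1}}$.
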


\begin{proof}
Let $h \in \Ism(\D)$ be a correctly orbitally incomparable isomorphism. 
Define $\{m_{u,v}:u, v \in \dom h \setminus \img h,\;\lnot(u\perp u)\}$ according to the following formula:
\[
m_{uv} := \left\{ 
\begin{array}{ll}
0, & \text{for~} u = v; \\
|[u]_h|, & \text{for~} \rho([u],[v]) = 1; \\
\max \left\{ \sum_{i = 0}^{j-1} m_{z_{i}z_{i+1}} \colon [z_0], [z_1], \dots, [z_j] \textrm{~is a~} [u][v]\textrm{-chain}\right\}, & \text{otherwise.}
\end{array}
\right.
\]
We show that $m_{u,v}$ satisfies our needs. The proof is inductive with respect to the length $\rho([x],[y])$. When $\rho([x], [y]) = 0$, in other words $[x] = [y]$, the result follows from orbital incomparability of $h$. If $\rho([x],[y])=1$, that is if there is no intermediate element $[u]$ between $[x]$ and $[y]$, then we are done by Lemma \ref{lem:firststep2}. Let $i\geq 1$, $\rho([x], [y])=i+1$ and suppose that for every $u,v \in \dom h \setminus \img h$ with $\rho([u],[v])\leq i$ there is $g^k(u) \perp v$ for $k > m_{uv}$. 
Now suppose that $g^{k}(x)$ is comparable with $y$ for some $k > m_{xy}$. 
 By the definition of a good extension, there is some $w \in \img g$ which lies between $g^{k}(x)$ and $y$ (indeed, if $h_{i-1}$ is such that $g^{k-1}(x)\in \img h_{i-1}\setminus \dom h_{i-1}$, then $y\in \dom h_{i-1}\setminus \img h_{i-1}$). Let $w = g^j(z)$ for some $z \in \dom h \setminus \img h$ and some $j \in \N$. Clearly, the orbit $[z]$ belongs to some $[x][y]$-chain, while $\rho([x][z]) \le i$ and $\rho([z][y]) \le i$.
Assume first $k\geq j$. Because $\lnot(g^{k}(x) \perp g^j(z))$ one has $\lnot(g^{k-j}(x) \perp z)$, hence $k - j \le m_{xz}$. If $k<j$, we also have $k-j\le m_{xz}$. Moreover, $\lnot(g^j(z) \perp y)$ implies $j \le m_{zy}$. Therefore, one has $k \le m_{xz} + m_{zy} \le m_{xy}$. This is a contradiction. 
\end{proof}

\begin{corollary} \label{lem:incompextension2points}
Let $h \in \Ism(\D)$ be a correctly orbitally incomparable isomorphism. Let $x,y\in\dom h$. There is $m_{xy}\in\N$ such that for any good orbit extension $g$ of $h$ and any $i\geq 1$, 
\[
g^{m_{xy}+i}(x)\perp y \text{ provided }g^{m_{xy}+i}(x)\in\img g. 
\] 
\end{corollary}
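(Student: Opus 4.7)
The plan is to deduce the corollary from the previous lemma by a direct reduction to the case $x, y \in \dom h \setminus \img h$ treated there, together with a separate (trivial) treatment of the case $[x]_h \perp [y]_h$. Since $h$ is correctly orbitally incomparable, each of its orbits is finite and acyclic (an equality $h^k(v) = v$ with $k \geq 1$ would contradict $h^k(v) \perp v$), so every element of $\dom h$ can be written uniquely as $h^{j}(u)$ with $u \in \dom h \setminus \img h$ the starting representative of the orbit. Accordingly, I write $x = h^{j_x}(x')$ and $y = h^{j_y}(y')$ with $x', y' \in \dom h \setminus \img h$ and $j_x, j_y \geq 0$. A standing observation used throughout is that, since $h\subseteq g$ and good orbit extensions only extend orbits forward, $g^k(x) = g^{k+j_x}(x')$ and $y = g^{j_y}(y')$.

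If $[x']_h \perp [y']_h$, then iterating the moreover part of Lemma~\ref{lem:orbitextension1}(a) along the chain of one-point extensions defining $g$ shows that the induced map $[v]_h \mapsto [v]_g$ is a $\preceq$-isomorphism between $(\dom h \cup \img h)_\sim$ and $(\dom g \cup \img g)_\sim$, so $[x']_g \perp [y']_g$. Whenever $g^k(x)$ is defined we have $g^k(x) \in [x']_g$ and $y \in [y']_g$, hence $g^k(x) \perp y$; thus $m_{xy} := 0$ suffices.

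If $\lnot([x']_h \perp [y']_h)$, let $m'$ be the constant supplied by the previous lemma applied to $x', y'$, and set $m_{xy} := \max\{m' + j_y - j_x,\, 0\}$. Fix $k > m_{xy}$ with $g^k(x) \in \img g$. Since $g$ is a partial isomorphism, $g^k(x) \perp y$ is equivalent to $g^{l}(x') \perp y'$ where $l := k + j_x - j_y$. A short index check gives $l > m'$ (from $k > m' + j_y - j_x$) and $l \geq 1$ (from $k > \max\{0, j_y - j_x\}$, using $m' \geq 0$); the latter, together with the fact that $g^l(x')$ is defined as an earlier iterate of the defined $g^{k+j_x}(x') = g^k(x)$, secures the ``provided'' clause $g^l(x') \in \img g$ of the previous lemma. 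Applying that lemma yields $g^l(x') \perp y'$, which translates back to $g^k(x) \perp y$.

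There is no real obstacle here beyond careful bookkeeping: the reduction to representatives in $\dom h \setminus \img h$ is mechanical, the order-preserving property of partial isomorphisms absorbs the shift by $j_x - j_y$, and the only point requiring attention is ensuring the hypothesis of the previous lemma is met after the reduction, which is handled by the choice of $m_{xy}$.
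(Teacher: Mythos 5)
Your proof is correct and takes essentially the same approach as the paper: reduce to orbit representatives $x',y'\in\dom h\setminus\img h$, handle the case $[x]\perp[y]$ via the fact that good orbit extensions preserve the orbit poset, and invoke the preceding lemma for the comparable case. The only difference is cosmetic: the paper takes the looser constant $m_{x'y'}+|[y]|$ and omits the index verification, whereas you use the tighter $\max\{m_{x'y'}+j_y-j_x,0\}$ and spell out the shift-by-$g$ bookkeeping explicitly.
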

\begin{proof}
If $x,y\in\dom h\setminus\img h$ and $\lnot([x]\perp[y])$, then we choose $m_{xy}$ as in previous lemma. In general, we take
$$
m_{xy}:=\left\{\begin{array}{ccc}m_{x'y'}+|[y]|&\mbox{if}&\lnot([x]\perp[y])\;\mbox{and}\;x'\in[x],y'\in[y],\;x',y'\in\dom h\setminus\rng h;\\
0&\mbox{if}&[x]\perp[y].\end{array}\right.
$$
\end{proof}

\begin{proof}[Proof of Lemma~\ref{lem:incompextension}]
Using Corollary~\ref{lem:incompextension2points} clearly, it is enough to take
\[
m := \max \{m_{xy} \colon x,y \in A\}
\]
and to take any good orbit extension $g$ producing long enough orbits for elements of the set $A$ (the existence of $g$ is guaranteed by Lemma \ref{lem:orbitextension1}). 
\end{proof}

For $A,B\subseteq\D$ we write $A\perp B$ if $a\perp b$ for every $a\in A$ and $b\in B$. 
\begin{lemma}\label{lem:GluingTwoIsomorphisms}
Let $h_0,h_1\in\Ism(\D)$ be such that 
\[
\dom h_0\cup\img h_0\perp \dom h_1\cup\img h_1.
\]
Then $h_0\cup h_1\in\Ism(\D)$. Moreover, if $h_0$ and $h_1$ are correctly orbitally incomparable, then so is $h_0\cup h_1$. 
\end{lemma}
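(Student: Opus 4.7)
The plan is to establish the two assertions in sequence, treating the ``moreover'' part as a straightforward decomposition of orbits once the first part is done.

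First, I would note that $\dom h_0 \cap \dom h_1 = \emptyset = \img h_0 \cap \img h_1$: indeed, any common element $x$ would satisfy $x \leq x$, contradicting $\dom h_0 \cup \img h_0 \perp \dom h_1 \cup \img h_1$. As observed in Section~2 of the paper, this immediately implies that $h := h_0 \cup h_1$ is a one-to-one function. To show $h$ is order-preserving, I would fix $x,y \in \dom h$ and distinguish cases. When $x,y$ belong to the same $\dom h_i$, the equivalence $x \leq y \iff h(x) \leq h(y)$ follows from $h_i$ being a partial isomorphism. When $x \in \dom h_0$ and $y \in \dom h_1$ (or vice versa), the assumption $\dom h_0 \perp \dom h_1$ forces $x \perp y$, while $h(x) = h_0(x) \in \img h_0$ and $h(y) = h_1(y) \in \img h_1$ are also incomparable by assumption. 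Hence $h \in \Ism(\D)$.

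For the moreover part, I would first observe that the orbits of $h$ are exactly the (disjoint) union of the orbits of $h_0$ and of $h_1$. The key point here is that whenever $x \in \dom h_0$, the fact that $\img h_0 \cap \dom h_1 = \emptyset$ forces $h(x) = h_0(x) \in \dom h_0 \cup (\img h_0 \setminus \dom h_0)$, so iterating $h$ starting from $x$ never escapes the $h_0$-orbit of $x$; symmetrically for $h_1$. This gives the identification $(\dom h \cup \img h)_\sim = (\dom h_0 \cup \img h_0)_{\sim_{h_0}} \sqcup (\dom h_1 \cup \img h_1)_{\sim_{h_1}}$ and also shows that $h^k(x) = h_i^k(x)$ whenever $x \in \dom h_i$ and $h^k(x) \in \img h$, from which orbital incomparability of $h$ follows at once from that of $h_0$ and $h_1$.

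Finally, to verify that $\preceq$ on the orbits of $h$ is a partial order, I would check that any $\preceq$-relation between orbits must occur within a single $h_i$: if $[x]$ comes from $h_0$ and $[y]$ comes from $h_1$ (or conversely), then no representatives $x' \in [x] \subseteq \dom h_0 \cup \img h_0$ and $y' \in [y] \subseteq \dom h_1 \cup \img h_1$ can satisfy $x' \leq y'$, by the incomparability hypothesis. Consequently $\preceq$ on the whole orbit set is the disjoint union of $\preceq_{h_0}$ and $\preceq_{h_1}$, which is itself a partial order (reflexivity and antisymmetry are inherited componentwise; transitivity holds because any chain $[x] \preceq [y] \preceq [z]$ is trapped inside one $h_i$). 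I do not anticipate a genuine obstacle here — the argument is a clean case analysis — the only subtlety worth being explicit about is the orbit-tracking observation, which prevents orbits of $h$ from accidentally mixing elements of the two sides.
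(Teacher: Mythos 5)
Your proposal is correct and follows essentially the same approach as the paper's proof: you establish disjointness, hence injectivity of the union, verify order-preservation by a case split on which $\dom h_i$ each point belongs to, and then observe that orbits of $h_0\cup h_1$ decompose as the disjoint union of orbits of $h_0$ and of $h_1$ and that $\perp$ between the two halves forces $\preceq$ to be the disjoint union of $\preceq_{h_0}$ and $\preceq_{h_1}$. You spell out more detail than the paper's rather terse proof --- in particular the orbit-tracking observation (that iterating $h$ from $x\in\dom h_0$ stays inside $\dom h_0\cup\img h_0$) and the preservation of orbital incomparability, both of which the paper leaves implicit --- but the substance and structure of the argument are the same.
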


\begin{proof}
Since $\dom h_0\cap\dom h_1=\emptyset=\img h_0\cap\img h_1$, the function $h:=h_0\cup h_1$ is one-to-one. We need only to prove that $h$ is order-preserving. If $a,b\in\dom h_i$, then
\[
a\leq b\iff h_{\color{black}i}(a)\leq h_i(b)\iff h(a)\leq h(b).
\]
If $a\in\dom h_0$ and $b\in\dom h_1$, then $a\perp b$ and $h(a)\perp h(b)$.\\
Now if $[a]$ is an orbit of $h_0$ and $[b]$ is an orbit of $h_1$, then $[a]\perp[b]$. Hence if $(\dom h_0\cup\rng h_0)_\sim$ and $(\dom h_1\cup\rng h_1)_\sim$ are posets, then so is $(\dom (h_0\cup h_1)\cup\img(h_0\cup h_1))_\sim$.
\end{proof}

The correct orbital incomparability can be also defined for automorphisms of $\D$. Automorphism $f\in\Aut(\D)$ is called \emph{orbitally incomparable} if $f^k(x)\perp x$ for every $x\in\D$ and every $k\in\Z\setminus\{0\}$. By $[x]$ we denote the $x$-orbit $\{f^k(x):k\in\Z\}$ of $f$. A function $f\in\Aut(\D)$ is called \emph{correctly orbitally incomparable} if $f$ is orbitally incomparable and the relation $\preceq$ on orbits of $f$ given by $[x]\preceq[y]$ if and only if $x'\leq y'$ for some $x'\in[x]$ and $y'\in[y]$ is a partial order. By $\mathcal{X}$ denote the set of all correctly orbitally incomparable automorphisms of $\D$.

We are very close to formulate the "Key Lemma". It can be easily seen that $\mX^{<}$ contains isomorphisms which may not be correctly orbitally incomparable. However, the following holds:
\begin{lemma}\label{aabb}
(1) If $f\in\Ism(\D)$ is correctly orbitally incomparable, then $f\in\mX^{<}$.\\
(2) For any $f\in\mX^{<}$ there exists correctly orbitally incomparable $f'\in\Ism(\D)$ such that $f\subseteq f'$.
\end{lemma}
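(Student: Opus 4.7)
The plan splits naturally according to the two parts of the lemma. For (1), I would carry out a standard back-and-forth argument. Fix an enumeration $\D=\{d_0,d_1,d_2,\dots\}$ and starting from $f_0:=f$ build inductively an increasing chain $f_0\subseteq f_1\subseteq f_2\subseteq\dots$ of correctly orbitally incomparable finite partial isomorphisms such that $d_n\in\dom f_{2n+1}\cap\img f_{2n+2}$. To secure $d_n\in\dom f_{2n+1}$, either $d_n$ is already in $\dom f_{2n}$, or $d_n\in\img f_{2n}\setminus\dom f_{2n}$ (apply Lemma~\ref{lem:orbitextension1}(a)), or $d_n\notin\dom f_{2n}\cup\img f_{2n}$ (apply Lemma~\ref{lem:neworbit}); in all cases the extension remains correctly orbitally incomparable. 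Since a map is correctly orbitally incomparable iff its inverse is (orbits and $\preceq$ are preserved under inversion), extending the range is handled by applying the same lemmas to $f_{2n+1}^{-1}$. The resulting $\bar f=\bigcup_n f_n\in\Aut(\D)$ is correctly orbitally incomparable because orbital incomparability and the partial-order property for $\preceq$ are each witnessed at a finite stage.

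For (2), I would pick some $\bar f\in\mX$ with $f\subseteq\bar f$ and carve out a good finite restriction of $\bar f$ containing $f$. Let $A_0:=\dom f\cup\img f$ and let $O_1,\dots,O_m$ be the finitely many $\bar f$-orbits meeting $A_0$. First enlarge $A_0$ to $A_1$ by adjoining, for each pair $(O_i,O_j)$ with $O_i\preceq O_j$ (with respect to the $\preceq$ of $\bar f$), a witness pair $a_{ij}\in O_i$, $b_{ij}\in O_j$ with $a_{ij}\le b_{ij}$. Next take the $\bar f$-convex hull $A_2$: for each $O_\ell$, insert every $\bar f^k(x)$ lying between two points already chosen on $O_\ell$. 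Finally set $f':=\bar f\!\upharpoonright\!(A_2\cap\bar f^{-1}(A_2))$. Then $f\subseteq f'$ by construction, and since $f'$ is a restriction of $\bar f$ it is automatically orbitally incomparable. The convex-hull step ensures that $[x]_{f'}=[x]_{\bar f}\cap A_2$ is a contiguous arc for every $x\in\dom f'\cup\img f'$, and hence that $[x]_{f'}=[y]_{f'}$ iff $[x]_{\bar f}=[y]_{\bar f}$. The witness step then guarantees that $[x]_{f'}\preceq[y]_{f'}$ iff $[x]_{\bar f}\preceq[y]_{\bar f}$, so the $f'$-version of $\preceq$ is the restriction of a partial order and is itself a partial order, giving correct orbital incomparability of $f'$.

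The only delicate step is this last one in (2): the naive choice $f':=\bar f\!\upharpoonright\!A_0$ typically fails because two distinct $f'$-orbits can lie inside a single $\bar f$-orbit (breaking antisymmetry) and because a witness for a comparison $O_i\preceq O_k$ may factor through an intermediate $O_j$ whose required witness is absent from $A_0$ (breaking transitivity). Both defects are precisely fixed by the two-step enlargement above, which is why I would perform the witness-adjunction before, and the $\bar f$-convex closure after. Part (1) is by comparison routine once the one-point extension lemmas are in hand; the main obstacle is making the bookkeeping in (2) rigorous.
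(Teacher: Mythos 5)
Your proof is correct and takes essentially the same route as the paper: part (1) is the standard back-and-forth using Lemmas~\ref{lem:orbitextension1} and \ref{lem:neworbit} (together with the observation that correct orbital incomparability is inherited by inverses), and part (2) restricts a correctly orbitally incomparable automorphism $\bar f\supseteq f$ to a finite set obtained by first adjoining a witness pair $a_{ij}\le b_{ij}$ for each $\preceq_{\bar f}$-comparison between orbits meeting $\dom f\cup\img f$ and then taking the $\bar f$-orbitwise convex hull, exactly as the paper does with its sets $A_i$. One small imprecision in your motivating remark: since every $\bar f$-orbit is an antichain, two disjoint $f'$-arcs inside the same $\bar f$-orbit can never be $\preceq_{f'}$-comparable, so antisymmetry is not actually at risk there; the convex-hull step is needed rather so that each adjoined witness $a_{ij}$ (resp.\ $b_{ij}$) lies in the same $f'$-orbit as the points of $f$ on $O_i$ (resp.\ $O_j$), which is what makes the orbit map $[x]_{f'}\mapsto[x]_{\bar f}$ a $\preceq$-embedding and hence transfers transitivity. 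This does not affect the validity of the argument.
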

\begin{proof}
The point (1) follows from Lemma \ref{lem:neworbit} and the back-and-forth argument (once again we remark that if $h$ is correctly orbitally incomparable, then so is $h^{-1}$).\\
We prove (2).\\
Clearly, $f$ is orbitally incomparable, hence there are distinct $x_1,...,x_n\in\dom f\setminus \rng f$ such that $[x_1]_f,...,[x_n]_f$ are all orbits of $f$.
 Let $\tilde{f}\in\mX$ be such that $f\subset \tilde{f}$. For each $i,j$ with $[x_i]_{\tilde{f}}<[x_j]_{\tilde{f}}$, choose $x_{ij}\in[x_i]_{\tilde{f}},\;x_{ji}\in[x_j]_{\tilde{f}}$ so that $x_{ij}<x_{ji}$. For $i=1,...,n$, let $C_i=\{j=1,...,n:\lnot([x_i]_{\tilde{f}}\perp[x_j]_{\tilde{f}})\}$.\\
Finally, for every $i=1,...,n$, let $x_i'$ be such that for every $y\in[x_i]_f\cup\{x_{ij}:j\in C_i\}$, there is $k>0$ so that $y=\tilde{f}^k(x'_i)$, and set
$$k_i:=\max\{k:\tilde{f}^k(x_i')\in [x_i]_{f}\cup\{x_{ij}:j\in C_i\}\},$$
$$
A_i:=\{\tilde{f}^k(x_i'):k=0,...,k_i\}
$$
and let $f':=\tilde{f}_{\vert A_1\cup...\cup A_n}$. Then $f\subset f'$, $f'\in \Ism(\D)$. Directly from the construction we see that for all $i,j=1,...,n$, $[x'_i]_{f'}<[x'_j]_{f'}$ iff $[x'_i]_{\tilde{f}}<[x'_j]_{\tilde{f}}$. Since $[x'_i]_{f'}$ are all orbits of $f'$ and $\tilde{f}$ is correctly orbitally incomparable, so is $f'$.
\end{proof}
Now we can formulate the "Key Lemma":

\begin{lemma}[Key Lemma]
Let $f \in \mX^{<}$, $g\in\Ism(\D)$, let $X$ be a finite subset of $\D$ and let $w(a,b)$ be a word of two letters. Then there are $k\in\N$, correctly orbitally incomparable $\tilde{f} \in \Ism(\D)$, and $\tilde{g} \in \Ism(\D)$ such that
\begin{enumerate}
\item $f\subseteq\tilde{f}$, $g\subseteq\tilde{g}$ and $X\subset\dom \tilde{f}^k$;
\item for any two $h_0,h_1\in\Ism(\D)$ such that $\dom h_0\cup\img h_0\subseteq X$ and $\dom h_1\cup\img h_1\subseteq \tilde{f}^k(X)$ the union $h_0\cup h_1$ belongs to $\Ism(\D)$;
\item there is $y\in\D$ such that $w(\tilde{f},\tilde{g})(y)\neq y$.
\end{enumerate}
\end{lemma}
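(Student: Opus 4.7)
The strategy mirrors the proofs of the analogous Key Lemmas in Sections~\ref{SectionBOmega} and~\ref{SectionC}: upgrade $f$ to a correctly orbitally incomparable isomorphism, build a ``word witness'' on new points sitting incomparably far from the rest, glue it on, and then apply Lemma~\ref{lem:incompextension} to push the iterates of $X$ into a region incomparable with $X$ itself, which will force (ii) via Lemma~\ref{lem:GluingTwoIsomorphisms}. We may assume $w$ is irreducible, and, by Lemma~\ref{aabb}(2), that $f$ is correctly orbitally incomparable.

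The first step constructs the witness. Take $u,v,A,B,Y=\{1,\dots,M+1\}$ from Lemma~\ref{WordLemma} applied to $w$, so $u,v$ have no cycles and $w(u,v)(1)=M+1$. Using the existential closedness of $\D$ finitely many times, pick pairwise incomparable points $y_1,\dots,y_{M+1}\in\D$ that are also incomparable with $\dom f\cup\img f\cup\dom g\cup\img g\cup X$. Define $s(y_a):=y_{u(a)}$ for $a\in A$ and $r(y_a):=y_{v(a)}$ for $a\in B$; since all $y_i$ are pairwise incomparable, $s,r\in\Ism(\D)$, and since $u,v$ have no cycles, each orbit of $s$ (and of $r$) consists of pairwise incomparable points, so $s$ is correctly orbitally incomparable with the trivial $\preceq$ on its orbits. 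Setting $f_1:=f\cup s$ and $g_1:=g\cup r$, Lemma~\ref{lem:GluingTwoIsomorphisms} yields $f_1,g_1\in\Ism(\D)$ with $f_1$ correctly orbitally incomparable, and by construction $w(f_1,g_1)(y_1)=y_{M+1}\neq y_1$.

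Next, if $X\not\subseteq\dom f_1$, extend $f_1$ to a correctly orbitally incomparable $f_2\in\Ism(\D)$ with $X\subseteq\dom f_2$, one point at a time: use Lemma~\ref{lem:orbitextension1}(a) for points of $X$ that lie in $\img f_1\setminus\dom f_1$ and Lemma~\ref{lem:neworbit} for points outside $\dom f_1\cup\img f_1$. Now apply Lemma~\ref{lem:incompextension} to $f_2$ with $A:=X$ to obtain a correctly orbitally incomparable extension $\tilde{f}\supseteq f_2$ and $k\in\N$ such that $\tilde{f}^k$ is defined on $X$ and $x\perp\tilde{f}^k(y)$ for all $x,y\in X$. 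Put $\tilde{g}:=g_1$.

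Verification is then automatic: (i) follows from the chain of inclusions and the fact that the domain of $\tilde{f}^k$ contains $X$; (ii) is an immediate application of Lemma~\ref{lem:GluingTwoIsomorphisms}, since $X\perp\tilde{f}^k(X)$ pointwise places $\dom h_0\cup\img h_0$ and $\dom h_1\cup\img h_1$ in incomparable regions of $\D$; and (iii) holds with $y=y_1$, as $\tilde{f}\supseteq f_1$ and $\tilde{g}=g_1$ preserve $w(\tilde{f},\tilde{g})(y_1)=y_{M+1}\neq y_1$. The delicate point is the first step: arranging that the new piece $s$ remains correctly orbitally incomparable after being spliced onto $f$, which is handled cleanly by making all witness points pairwise incomparable so that the induced $\preceq$ on the new orbits is empty and no undesired comparabilities creep in.
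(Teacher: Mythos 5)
Your proof is correct and follows the same strategy as the paper, invoking the same sequence of lemmas (Lemma~\ref{aabb}, Lemma~\ref{WordLemma}, existential closedness, Lemma~\ref{lem:GluingTwoIsomorphisms}, Lemma~\ref{lem:neworbit}, Lemma~\ref{lem:incompextension}). The only difference is the order: you build the word witness $s,r$ first and then invoke Lemma~\ref{lem:incompextension} to push iterates of $X$ into an incomparable region, whereas the paper pushes $X$ away first (producing $f_0$ with $X\perp f_0^k(X)$) and only then attaches the witness points $x_1,\dots,x_m$, chosen incomparable to all of $\dom f_0\cup\img f_0\cup\dom g\cup\img g$. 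Both orders work — your argument correctly notes that extensions of $f_1$ and keeping $\tilde{g}=g_1$ leave $w(\tilde f,\tilde g)(y_1)=y_{M+1}$ intact, and that Lemma~\ref{lem:incompextension} yields a good orbit extension preserving correct orbital incomparability, so the conclusion holds — but this is a cosmetic reordering, not a genuinely different route.
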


\begin{proof}
By Lemmas \ref{lem:neworbit} and \ref{aabb} we may assume that $X\subset\dom f$ and $f$ is correctly orbitally incomparable. Using Lemma \ref{lem:incompextension} we find large enough $k$ and a correctly orbitally incomparable $f_0$ such that $f\subseteq f_0$, $X\subset \dom f_0^k$ and $X\perp f_0^k(X)$. By Lemma \ref{lem:GluingTwoIsomorphisms} we obtain (ii). Assume that $w(a,b)$ is a word of the form $a^{m_k}b^{n_k}\dots a^{m_0}b^{n_0}$ where $n_0,m_k\in\Z$ and $n_k,\dots, m_0\in\Z\setminus\{0\}$. Let $Y=\{1,2\dots,m\}$ where $m=\sum_{i=0}^k(|n_i|+|m_i|)+1$. By Lemma \ref{WordLemma} there are $A,B\subset Y$ and functions $u:A\to Y$ and $v:B\to Y$ such that $u,v$ have no cycles and $w(u,v)(1)=m$. Using e.c. property for $\D$ we find $x_1,\dots,x_m$ such that $x_i\perp x_j$ for $i\neq j$ and $\{x_1,\dots,x_m\}\perp {\color{black}\dom f_0\cup\img f_0}\cup\dom g\cup\img g$. Let $f_1:\{x_i:i\in A\}\to\{x_1,\dots,x_m\}$ and $g_1:\{x_i:i\in B\}\to\{x_1,\dots,x_m\}$ be given by the formulas $f_1(x_i)=x_{u(i)}$ and $g_1(x_i)=x_{v(i)}$. Then $w(f_1,g_1)(x_1)=x_m$. Note that $f_1$ is correctly orbitally incomparable. By Lemma \ref{lem:GluingTwoIsomorphisms} $f_0\cup f_1$ is correctly orbitally incomparable and $g\cup g_1\in\Ism(\D)$. Finally, (i)-(iii) are satisfied by $\tilde{f}:=f_0\cup f_1$ and $\tilde{g}:=g\cup g_1$.
\end{proof}

We also need the following:
\begin{lemma}\label{DGDelta}
$\mathcal{X}$ is a $G_\delta$ subset of $\Aut(\D)$. 
\end{lemma}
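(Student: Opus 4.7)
The plan is to decompose $\mathcal{X}$ as the intersection of three sets, each of which will be shown to be $G_\delta$ in $\Aut(\D)$: the set of orbitally incomparable automorphisms, the set where $\preceq_f$ is antisymmetric on orbits, and the set where $\preceq_f$ is transitive on orbits. Since $\preceq_f$ is automatically reflexive (from $x\leq x$), this decomposition really does yield $\mathcal{X}$, and it reduces the problem to three standard descriptive-set-theoretic calculations using the continuity of evaluation maps on $\Aut(\D)$.

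For the first factor I would write
\[
\{f\in\Aut(\D):f\text{ is orbitally incomparable}\}=\bigcap_{x\in\D}\bigcap_{k\in\Z\setminus\{0\}}\{f:f^k(x)\perp x\}.
\]
For each fixed $x$ and $k$, the evaluation map $f\mapsto f^k(x)$ is continuous into the countable discrete space $\D$, so the preimage of the subset $\{z\in\D:z\perp x\}$ is clopen in $\Aut(\D)$. Hence this factor is a countable intersection of clopen sets, and in fact closed.

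For the second factor the strategy is to show the complement is $F_\sigma$. Failure of antisymmetry means exactly that there exist $x,y\in\D$ with $y\notin[x]_f$ but $[x]_f\preceq[y]_f$ and $[y]_f\preceq[x]_f$, so
\[
\bigl\{f:\preceq_f\text{ fails antisymmetry}\bigr\}=\bigcup_{x,y\in\D}\bigl(C_{x,y}\cap U_{x,y}\bigr),
\]
where $C_{x,y}:=\bigcap_{k\in\Z}\{f:f^k(x)\neq y\}$ is closed (intersection of clopens) and $U_{x,y}:=\bigcup_{k_1,l_1,k_2,l_2\in\Z}\{f:f^{k_1}(x)\leq f^{l_1}(y)\text{ and }f^{l_2}(y)\leq f^{k_2}(x)\}$ is open (union of clopens). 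Since in any metrizable space every open set is $F_\sigma$, the intersection $C_{x,y}\cap U_{x,y}$ is $F_\sigma$, and the countable union over $x,y\in\D$ remains $F_\sigma$. Thus the set where antisymmetry holds is $G_\delta$. The transitivity factor is handled in the same way: express failure of transitivity as "$\exists x,y,z\in\D:[x]\preceq[y],\ [y]\preceq[z],\ \neg([x]\preceq[z])$", which is once again a countable union of (open $\cap$ closed) sets and hence $F_\sigma$.

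The argument is little more than unwinding the definitions through the continuity of the evaluation maps $f\mapsto f^k(x)$. The only mildly subtle point is the standard observation that in a metrizable space an open set is $F_\sigma$, so its intersection with a closed set is $F_\sigma$; beyond that I foresee no real obstacle.
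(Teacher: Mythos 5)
Your proof is correct and takes essentially the same route as the paper: both decompose $\mathcal{X}$ into the orbitally-incomparable, antisymmetric, and transitive parts and exploit the fact that conditions on finitely many values of $f$ define clopen sets. The only cosmetic difference is that you argue on the complement side (showing the bad sets are $F_\sigma$ via open $=F_\sigma$ in metric spaces) whereas the paper builds the good sets directly as countable intersections of sets that are unions of a closed set and an open set, using the dual fact that closed sets are $G_\delta$.
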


\begin{proof}
Note that {\color{black}$f\in\mX$ iff the following holds:
\begin{itemize}
\item[(a)] $\forall_{x\in\D}\;\forall_{k>0}\;(f^k(x)\perp x)$;
\item[(b)] $\forall_{x,y,z\in\D}\;([x]\preceq[y]\;\mbox{and}\;[y]\preceq[z])\Rightarrow([x]\preceq[z])$;
\item[(c)] $\forall_{x,y\in\D}\;([x]\preceq[y]\;\mbox{and}\;[y]\preceq[x])\Rightarrow([x]=[y])$.
\end{itemize}
Hence it is enough to show that sets
$$
A:=\{f\in\Aut(\D):\forall_{x\in\D}\;\forall_{k>0}\;(f^k(x)\perp x)\}
$$
$$
B:=\{f\in\Aut(\D):\forall_{x,y,z\in\D}\;([x]\preceq[y]\;\mbox{and}\;[y]\preceq[z])\Rightarrow([x]\preceq[z])\}
$$
$$
C:=\{f\in\Aut(\D):\forall_{x,y\in\D}\;([x]\preceq[y]\;\mbox{and}\;[y]\preceq[x])\Rightarrow([x]=[y])\}
$$
are $G_\delta$ (then its intersection $A\cap B\cap C=\mX$ is also $G_\delta$).\\
We first deal with $A$.}
Note that 
\[
f^k(x)\perp x\iff\exists a_1,a_2,\dots,a_k\in\D\;(f(x)=a_1,f(a_1)=a_2,\dots,f(a_{k-1})=a_k\text{ and }x\perp a_k).
\]
Thus
\[
\{f\in\Aut(\D):f^k(x)\perp x\}=\bigcup_{a_1,\dots,a_k\in\D,\;x\perp a_k}\{f\in\Aut(\D):f(a_1)=a_2,\dots,f(a_{k-1})=a_k\}
\]
is {\color{black}open, as a union of open sets}. {\color{black} Also
$$
A=\bigcap_{x\in\D}\bigcap_{k>0}\{f\in\Aut(\D):f^k(x)\perp x\}
$$
so it is} $G_\delta$ in $\Aut(\D)$. {\color{black}Before we deal with $B$ and $C$, observe that for any $x,y\in\D$, the sets
$$
D_{x\leq y}:=\{f\in\Aut(\D):[x]\preceq[y]\}
$$
$$
D_{x=y}:=\{f\in\Aut(\D):[x]=[y]\}
$$
are open. We just shows it for the first set. The case of the second goes in the same manner. If $x\leq y$, then $D_{x\leq y}=\Aut(\D)$. In the converse case, we have ($[x]\preceq[y]$ iff $\exists_{k>0}\;(f^k(x)\leq y)\;\vee\;(x\leq f^k(y))$), and 
\[
f^k(x)\leq y\iff\exists a_1,a_2,\dots,a_k\in\D\;(f(x)=a_1,f(a_1)=a_2,\dots,f(a_{k-1})=a_k\text{ and }a_k\leq y),
\]
\[
x\leq f^k(y)\iff\exists a_1,a_2,\dots,a_k\in\D\;(f(y)=a_1,f(a_1)=a_2,\dots,f(a_{k-1})=a_k\text{ and }x\leq a_k),
\]
hence
$$
D_{x\leq y}=\bigcup_{a_1,...,a_k\in\D\;a_k\leq y}\;\{f\in\Aut(\D):f(x)=a_1,...,f(a_{k-1})=a_k\}\cup$$ $$\cup \bigcup_{a_1,...,a_k\in\D\;x\leq a_k}\;\{f\in\Aut(\D):f(y)=a_1,...,f(a_{k-1})=a_k\}.
$$
Thus $D_{x\leq y}$ is open in this case also.\\
Now observe that for all $x,y,z\in\D$ and $f\in\Aut(\D)$, 
$$
([x]\preceq[y]\;\mbox{and}\;[y]\preceq[z])\;\Rightarrow\;[x]\preceq[z])\;\iff
\lnot([x]\preceq[y])\;\mbox{or}\;\lnot([y]\preceq[z])\;\mbox{or}\;[x]\leq [z].
$$
Hence 
$$
B=\bigcap_{x,y,z\in\D}\left((\Aut(\D)\setminus D_{x\leq y})\cup(\Aut(\D)\setminus D_{y\leq z})\cup D_{x\leq z}\right)
$$
is $G_\delta$ as countable intersection of $G_\delta$-sets (we remark that closed sets are $G_\delta$ as we work in metric spaces).\\
In the same way we can show that
$$
C=\bigcap_{x,y\in\D}\left((\Aut(\D)\setminus D_{x\leq y})\cup(\Aut(\D)\setminus D_{y\leq x})\cup D_{x=y}\right)
$$
so $C$ is $G_\delta$ also.
}
\end{proof}

Using Key Lemma, Lemma \ref{DGDelta} and {\color{black}Theorem \ref{SecondGeneralTheorem}} we obtain the following.

\begin{corollary}
\begin{itemize}
\item[(i)] The set
\[
\{(f,g)\in\mathcal{X}\times\Aut(\D):f\text{ and }g\text{ freely generate a dense subgroup of }\Aut(\D)\}
\]
is comeager in $\mathcal{X}\times\Aut(\D)$.
\item[(ii)] {\color{black}For any $m\in\N$, }the set of cyclically dense elements $\bar{g}\in\Aut(\D)^m$ for the diagonal action is comeager in $\Aut(\D)^m$. 
\end{itemize}
\end{corollary}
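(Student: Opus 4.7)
The plan is to deduce both parts of the corollary from Theorem~\ref{SecondGeneralTheorem}, taking $A=\D$ and $\mathcal{X}$ the family of correctly orbitally incomparable automorphisms of $\D$. The two conclusions of that theorem coincide with parts (i) and (ii) of the corollary, so the whole task reduces to verifying its hypotheses. All the substantive work has already been carried out in Lemma~\ref{DGDelta} and in the Key Lemma, so at this stage there is no genuine obstacle left; one only needs to match the ingredients.

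First, Lemma~\ref{DGDelta} confirms that $\mathcal{X}$ is a $G_\delta$ subset of $\Aut(\D)$, which is the standing assumption of Theorem~\ref{SecondGeneralTheorem}. Next I verify the first (cyclical density) hypothesis: given $f_0 \in \mathcal{X}^{<\omega}$ and a nonempty finite $X \subseteq \D$, I apply the Key Lemma with any $g \in \Ism(\D)$ and any word $w$, obtaining $k \in \N$ and an extension $\tilde{f}$ of $f_0$ satisfying condition (ii) of the Key Lemma, which is precisely what Theorem~\ref{SecondGeneralTheorem} demands. The extension $\tilde{f}$ is correctly orbitally incomparable, so by Lemma~\ref{aabb}(1) it belongs to $\mathcal{X}^{<\omega}$.

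To verify the additional (free generation) hypothesis, given $(f_0, g_0) \in \mathcal{X}^{<\omega} \times \Ism(\D)$ and an irreducible word $w$, I apply the Key Lemma once more to obtain an extension $(\tilde{f}, \tilde{g}) \in \mathcal{X}^{<\omega} \times \Ism(\D)$ and some $y \in \D$ with $w(\tilde{f}, \tilde{g})(y) \neq y$, again invoking Lemma~\ref{aabb}(1) for the membership of $\tilde{f}$ in $\mathcal{X}^{<\omega}$.

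With both hypotheses verified, Theorem~\ref{SecondGeneralTheorem} yields (i) directly. For (ii), the proof of Theorem~\ref{SecondGeneralTheorem} uses the Kuratowski-Ulam theorem to extract a single $f \in \mathcal{X}$ such that the set of $\bar{g} \in \Aut(\D)^m$ for which $\{(f^k g_1 f^{-k}, \ldots, f^k g_m f^{-k}) : k \in \Z\}$ is dense in $\Aut(\D)$ is comeager in $\Aut(\D)^m$. Every such $\bar{g}$ is cyclically dense for the diagonal action, so the set of cyclically dense $m$-tuples contains a comeager subset and is therefore itself comeager in $\Aut(\D)^m$.
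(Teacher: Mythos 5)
Your proof is correct and takes essentially the same route as the paper, which derives the corollary directly from the Key Lemma, Lemma~\ref{DGDelta}, and Theorem~\ref{SecondGeneralTheorem}; you have merely spelled out the verification of the hypotheses (including the use of Lemma~\ref{aabb}(1) to see that $\tilde f\in\mathcal{X}^{<\omega}$) and the Kuratowski--Ulam step, exactly as the authors intend.
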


Glass, McCleary and Rubin proved that $\Aut(\D)$ is freely topologically two generated, see \cite[Proposition 4.1]{GMR}. Our proof is direct and we use quite different methods than that in \cite{GMR}, and moreover, we obtain a stronger assertion. By \cite{KT}, $\Aut(\D)$ has strong Rokhlin property. It is still unknown if $\Aut(\D)$ has ample generics -- Truss conjectured in \cite{T2007} that, as in $\Aut(\Q)$, it is not the case.

\section{Remarks}

\begin{proposition}\label{PropNowhereDense}
Let $A$ be a countable structure. Assume that for any pair $f_0,g_0\in\Ism(A)$ there is a finite set $X\subset A$ and $f_1,g_1{\color{black},h}\in\Ism(A)$ such that\\
(1) $f_0\subset f_1$, $g_0\subset g_1$;\\
(2) $X\subset\dom f_1\cap\dom g_1$;\\
(3) $f_1(X)=X=g_1(X)$;\\
(4) {\color{black}$X\subset\dom h$} and $h(X)\nsubseteq X$. \\
Then the set
\[
\{(f,g)\in\Aut(A)\times\Aut(A):\langle f,g\rangle\text{ is dense in }\Aut(A)\}
\]
is {\color{black} nowhere dense} in $\Aut(A)\times\Aut(A)$.
\end{proposition}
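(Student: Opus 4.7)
The plan is to prove that the complement of $\mathcal{D} := \{(f,g)\in\Aut(A)^2 : \langle f,g\rangle\text{ is dense in }\Aut(A)\}$ contains a dense open subset of $\Aut(A)^2$, which is equivalent to $\mathcal{D}$ being nowhere dense. So I fix an arbitrary nonempty basic open set $U := \{(f,g)\in\Aut(A)^2 : f_0\subset f,\ g_0\subset g\}$ with $f_0,g_0\in\Ism(A)$, and aim to produce a nonempty open subset $V\subset U$ disjoint from $\mathcal{D}$.

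Apply the hypothesis to the pair $(f_0,g_0)$ to obtain a finite $X\subset A$ together with $f_1,g_1,h\in\Ism(A)$ satisfying (1)--(4), and set
\[
V := \{(f,g)\in\Aut(A)^2 : f_1\subset f,\ g_1\subset g\}.
\]
By (1) we have $V\subset U$, and $V$ is nonempty since, under the standing ultrahomogeneity of $A$, every partial isomorphism extends to an automorphism. The key observation is that \emph{every element of $\langle f,g\rangle$ preserves $X$ setwise for each $(f,g)\in V$}. Indeed, by (2) the restriction $f_1|_X$ is defined, and by (3) it is a bijection of the finite set $X$ onto itself; any $f\in\Aut(A)$ extending $f_1$ therefore satisfies $f|_X = f_1|_X$, whence $f(X)=X$. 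The same argument gives $g(X)=X$, and since $f^{-1}|_X$ and $g^{-1}|_X$ are then also bijections of $X$, an easy induction on word length yields $w(f,g)(X)=X$ for every word $w(a,b)$.

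Finally, using (4), consider the basic open set $W := \{k\in\Aut(A) : h\subset k\}$ in $\Aut(A)$, which is nonempty by ultrahomogeneity. Every $k\in W$ satisfies $k(X)=h(X)\nsubseteq X$, so in particular $k(X)\neq X$; but every element of $\langle f,g\rangle$ preserves $X$, so $\langle f,g\rangle\cap W=\emptyset$ and $\langle f,g\rangle$ cannot be dense in $\Aut(A)$. Hence $V\cap\mathcal{D}=\emptyset$, completing the argument. No serious obstacle arises: the hypothesis essentially supplies a finite set that every pair in a small open box must preserve setwise, and (4) supplies a target open set in $\Aut(A)$ that such pairs can never hit.
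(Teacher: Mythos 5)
Your proof is correct and follows essentially the same route as the paper's: extend $(f_0,g_0)$ to $(f_1,g_1)$ via the hypothesis, observe that every word in any extensions of $f_1,g_1$ maps $X$ onto $X$, and use $h$ to produce a nonempty basic open set in $\Aut(A)$ that $\langle f,g\rangle$ consequently misses. Your aside about ultrahomogeneity being needed to guarantee nonemptiness of $V$ and $W$ is accurate and applies equally to the paper's own argument, which is implicitly invoking $\Ism(A)=\Aut(A)^{<\omega}$ as in all the intended applications.
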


\begin{proof}
Let $f_0,g_0\in\Ism(A)$. Find $X\subset A$ and $f_1,g_1,h\in\Ism(A)$ fulfilling (1)--(4). Note that for any extensions {\color{black}$f$} and  {\color{black}$g$} of $f_1$ and $g_1$, respectively, and any word $w(a,b)$ we have {\color{black}$w(f,g)(X)=X$}. Therefore {\color{black}$\tilde{h}\nsubseteq w(f,g)$ for any extension $\tilde{h}$ of $h$, so $\langle f,g\rangle$ is not dense in $\Aut(A)$}. {\color{black}Hence
$$
\{(f,g)\in\Aut(A)\times\Aut(A):f_1\subset f,\;g_1\subset g\}\subseteq$$ $$\subseteq \{(f,g)\in\Aut(A)\times\Aut(A):f_0\subset f,\;g_0\subset g\}\cap\{(f,g)\in\Aut(A)\times\Aut(A):\langle f,g\rangle\text{  {\bf is not} dense in }\Aut(A)\}
$$
Hence the set 
\[
\{(f,g)\in\Aut(A)\times\Aut(A):\langle f,g\rangle\text{  {\bf is not} dense in }\Aut(A)\}
\]
contains open dense set} in $\Aut(A)\times\Aut(A)$.
\end{proof}
\begin{remark}\emph{
In the previous sections {\color{black}we proved} that sets of the form
\[
\{(f,g)\in{\color{black}\mathcal{X}}:\langle f,g\rangle\text{ is dense in }\Aut(A)\}
\]
are comeager in ${\color{black}\mathcal{X}}$ where $A$ is a countable ultrahomogeneous poset and $\mathcal{X}$ is {\color{black}certain} $G_\delta$-subset of ${\color{black}\Aut(A)\times\Aut(A)}$. A set $\mathcal{X}$ need to be meager in ${\color{black}\Aut(A)\times\Aut(A)}$. To prove it we can use Proposition \ref{PropNowhereDense}. \\
If $f,g\in\Ism(\B_n)$ {\color{black}(where $n<\omega$)}, then there is $p\in\Q$ such that $\dom f\cup \img f\cup\dom g\cup\img g<p$. Let $X:=\{p\}\times\{1,2,\dots,n\}$. Define $u(p,k)=(p,\tau_f(k))$ and $v(p,k)=(p,\tau_g(k))$ for $k\leq n$. Then $f\cup u,g\cup v\in\Ism(A)$. Put $h(p,1)=(p+1,1)$. Clearly the assumptions {\color{black}from} Proposition \ref{PropNowhereDense} are fulfilled. The similar argument works for $\B_\omega$, $\mC_n$ and $\mC_\omega$. \\
If $f,g\in\Ism(\D)$, then there are $x,y$ such that $x\perp y$ and $x,y$ are incomparable to any element from $\dom f\cup \img f\cup\dom g\cup\img g$. Define $u(x)=x=v(x)$ and $u(y)=y=v(y)$. Then $f\cup u,g\cup v\in\Ism(A)$. Put $h(x)=y$. As before the assumptions {\color{black}from} Proposition \ref{PropNowhereDense} are fulfilled. 
}\end{remark}
\begin{remark}\label{BnHasNoCyclicallyDenseElements}\emph{
There is an asymmetry between Section \ref{SectionBn} and Sections \ref{SectionBOmega}--\ref{SectionC}. We were able to prove stronger results for $\B_\omega$, $\mC_n$ and $\mC_\omega$ using simpler arguments than that for $\B_n$. There is a structural reason for that. Firstly let us note that there is no cyclically dense element $g\in\B_n$  {\color{black}for $n\geq 2$, for the diagonal action $\B_n$ on $\B_n$ (and, consequently, no cyclically dense $\bar{g}\in\B_n^m$)}. Suppose that $g\in\B_n$ is cyclically dense. Then there is $f\in\B_n$ such that $\{f^kgf^{-k}:k\in\Z\}$ is dense in $\B_n$. Thus $\{\tau^k_f\tau_h\tau^{-k}_f:k\in\Z\}$ equals $S_n$. In particular there is $k\in\Z$ such that $\tau^k_f\tau_h\tau^{-k}_f=\on{id}$. Thus $\tau_{\color{black}h}=\on{id}$ and consequently $S_n=\{\on{id}\}$ which means that $n=1$. This is a contradiction. 
}\end{remark}
\begin{remark}\emph{
The complexity of proofs in Section \ref{SectionBn} have also the other cause. Let us introduce the following notion. Let $G$ be a finite group which is generated by two elements. {\color{black}\emph{The complexity}} $c_G$ of $G$ is the smallest number $k$ such that there are $a,b\in G$ which generates $G$ and 
\[
G=\{a^{n_1}b^{m_1}a^{n_2}b^{m_2}\dots a^{n_k}b^{m_k}:n_i,m_i\in\Z\}. 
\]
By $c_n$ we denote the complexity of $S_n$. The Landau's number $L_n$ is the maximum order of permutation from $S_n$. It turns out that $\log L_n\sim \sqrt{n\log n}$, see \cite{Mil}. On the other hand Stirling's formula says that $\vert S_n\vert=n!\sim \sqrt{2\pi n}(\frac{n}{e})^n$.  Using these facts we prove that $c_n\to\infty$. \\
Suppose that $c_n$ does not tend to infinity. That means that $c_n\leq k$ for {\color{black}some $k$ and} every $n\in\N$. Let $a,b\in S_n$ be such that
\[
S_n=\{a^{n_1}b^{m_1}a^{n_2}b^{m_2}\dots a^{n_k}b^{m_k}:n_i,m_i\in\Z\}. 
\]
Then $n!\leq L_n^{2k}$. Using approximations for Landau's number $L_n$ and for $n!$ we obtain from the latter inequality that $n$ is bounded from above. That yields a contradiction.\\
This shows that the word which we have constructed in the proof of Lemma \ref{KillingIsom} cannot be short and simple.
}\end{remark}


\end{document}